\def\R{{\mathbb R}}
\renewcommand{\a}{\alpha}
\renewcommand{\b}{\beta}
\renewcommand{\l}{\lambda}
\renewcommand{\o}{\omega}
\renewcommand{\t}{\theta}
\newcommand{\<}{\langle}
\renewcommand{\>}{\rangle}
\def\P{{\mathbb P}}
\newtheoremstyle{theorem}
{10pt}
{10pt}
{\sl}
{}
{\bf}
{. }
{ }
{}
\theoremstyle{theorem}
\newtheorem{thm}{Theorem}[section]
\newtheorem{cor}{Corollary}[section]
\newtheorem{de}{Definition}[section]
\newtheorem{lem}{Lemma}[section]
\newtheorem{rem}{Remark}[section]
\newtheorem{ex}{Example}[section]
\renewcommand{\b}{\beta}
\newcommand{\g}{\gamma}
\newcommand{\mcD}{\mathcal D}
\newcommand{\mcA}{\mathcal{A}}
\newcommand{\mcB}{\mathcal{B}}
\newcommand{\mcF}{\mathcal{F}}
\newcommand{\F}{\mathcal{F}}
\numberwithin{equation}{section}
\newtheoremstyle{defi}
{10pt}
{10pt}
{\rm}
{}
{\bf}
{. }
{ }
{}
\theoremstyle{defi}
\begin{document} % 正文开始 (the body of the article begins)

%\begin{CJK*}{GBK}{Song}

% 标题 (title)
\title{{\large  \textbf{Random attractors for locally monotone  stochastic partial differential equations with linear multiplicative fractional noise \thanks{This work
				is supported in part by a NSFC Grant No. 12171084 and the fundamental Research
				Funds for the Central Universities No. 2242022R10013.}   }}% 文章标题名 (full title name of the article)
\author{Qiyong Cao
\footnote{Email: xjlyysx@163.com.} \  \  Hongjun Gao \footnote{Correspondence, Email: hjgao@seu.edu.cn}
\\
\\
\footnotesize{ School of Mathematics,
	Southeast University, Nanjing 211189, P. R. China} % 地址名 (address)
}
 }  % 推荐者名 (presenter)

 \date{}
\maketitle % 建立标题部分 (set up the part of title)
\footnotesize
%\begin{abstract}
\noindent \textbf{Abstract~~~} In this paper, we consider the random attractors for a class of locally monotone stochastic partial differential equations perturbed
by the linear multiplicative fractional Brownian motion with Hurst index $H\in(\frac{1}{2},1)$. We obtain the random attractors or $\mathcal{D}$-pullback random attractors for these
systems and some examples are given in this paper.
\\[2mm]
\textbf{Key words~~~} Random dynamical systems; random attractors; 2D Navier-Stokes equation; Porous media equation; fractional Brownian motion.
\\
\textbf{2020 Mathematics Subject Classification~~~}37L30; 37L55; 60G22; 35R60
%\end{abstract}

\section{Introduction}
\setcounter{equation}{0}
The  random attractors  for stochastic partial  differential  equations(SPDEs)  driven by   white noise, colored noise or L\'{e}vy noise  have been intensively studied(Refs.
\cite{MR1451294,MR1305587,schmalfuss1992backward,MR3959496,MR3960498,MR4022971,MR4093237,MR2468738,MR2498851,MR2927390,MR2775822,MR4097578,MR3787205,MR2780247,MR2531632,MR1943557,MR3424621,MR4097253,MR2812588,MR3178475,MR3053476,MR3027636,MR3612976,MR2510419}
et al.).  Note that the references \cite{MR1451294,MR1305587,schmalfuss1992backward} provide the basic theory of random attractors. Compared with the previous results we mentioned,
the random attractors of the SPDEs driven by the fractional Brownian motion(fBm) are relatively few,  and the fBm in complex models always  is additive, such as, Refs.
\cite{MR3225217,MR2946314,MR2812588,MR3047956} et al. Due to this fact, we mainly focus on the multiplicative fractional noise in this paper.  For the multiplicative fBm,  there are
some results on random attractors, such as, Refs.  \cite{MR3996912,MR2738732,MR3226746,MR4385780,MR4608383,MR4594461,MR4383300,cao2022wongzakai}.  On the one hand,  although the
methods in Refs. \cite{MR3996912,MR4594461} can be  applied to some systems driven by the linear multiplicative fBm, more restricts on the drift terms are imposed, such as, globally
Lipschitz with small Lipschitz constant.  On the other hand, a class of  evolution systems driven by the  nonlinear multiplicative fBm are discussed in Refs.
\cite{MR3226746,MR4385780,MR4608383,MR4383300,cao2022wongzakai,MR2738732}. Note that  all these results require that the drift terms are globally Lipschitz and the diffusion terms
have some kind of boundedness.      In addition, the methods in Refs. \cite{MR3226746,MR4608383,MR4383300,cao2022wongzakai,MR2738732} require that the drift terms are globally
Lipschitz  and the Lipschitz constants are very small.   It is worth mentioning that the method in \cite{MR4385780} can be applied to  the systems which the drift term is locally
Lipschitz and dissipative,  however it can not be applicable to  the infinite-dimensional systems. So the methods in those paper we mentioned can not be applied to the complex model
in infinite-dimensional systems driven by the multiplicative fBm, even in the case of  linear multiplicative fBm.
In short, the infinite-dimensional complex systems with the  multiplicative fBm  are still an open question.  Therefore,  it is a challenging task to deal with the general drift term
in SPDEs driven by the multiplicative  fBm, such as, stochastic Navier-Stokes model,  stochastic Cahn-Hilliard model, et al. As the first step of this field, our aim is to consider
the complex systems driven by the linear multiplicative fBm.

It is very important to consider the well-posedness of SPDEs driven by the multiplicative fBm, it provides a foundation for the study of the dynamics of the system.  The global solution of SPDEs driven by
the multiplicative fBm are considered  in  Refs. \cite{MR3072986,MR4431448,MR4097587},  their methods are not applied to the  model with local Lipschitz drift and driven by  the multiplicative fBm.
Motivated by the results we mentioned, we plan to construct the variational method for SPDEs  driven by the linear multiplicative  fBm with Hurst index $H>\frac{1}{2}$ and its random
attractors.

More precisely, we consider the autonomous SPDEs of the form
\begin{equation}
	du_{t}=A(u_{t})dt+\beta u_td\omega_{t}\label{eqn:SPDE1}
\end{equation}
and the nonautonomous SPDEs of the form
\begin{equation}\label{eqn:SPDE2}
	du_{t}=A(t,u_{t})dt+\beta u_td\omega_{t},
\end{equation}
where $\omega_{t}$ is a realized version of the fBm $B^H_t(\omega)$ with  Hurst index $H\in(\frac{1}{2},1)$, $\beta\in\mathbb{R}$ and the operators $A$ are locally monotone in the
equation \eqref{eqn:SPDE1} and equation \eqref{eqn:SPDE2} (cf.\ $(A2)$ and $(A2^\prime)$ below) with respect to a Gelfand triple $V\subseteq H \subseteq V^*$. The variational
framework depends on the concept of \textit{locally} monotone operators. We extend the previous results(see Refs. \cite{MR2812588,MR4097253,MR3027636}), which  noises do not contain
the linear multiplicative fBm. Furthermore, our method can be applied to some complex models, such as, Porous media equation, $2D$ stochastic Navier-Stokes equation, more details can
be found in Section \ref{example}.

Our the main strategy to complete this work as follows: The well-posedness of the equation \eqref{eqn:SPDE1} and the equation \eqref{eqn:SPDE2} depend on the transformation we
introduced in Section \ref{sec:setup}, based on the transformation, we can transform the equations  \eqref{eqn:SPDE1} and \eqref{eqn:SPDE2} into  a random partial differential
equation(random PDE). According to the equivalence of the solution(Lemma \ref{equvilence} and Lemma \ref{equvilence1} below), we can get the variational solution of the equations
\eqref{eqn:SPDE1} and \eqref{eqn:SPDE2}.
For the random attractors of the systems \eqref{eqn:SPDE1} and \eqref{eqn:SPDE2}, we  can use the classical  framework, namely the  existence of bounded absorbing set  in $H$ and the
precompactness of random dynamical system to construct the random attractors.

In this paper, the main difficulty is to get the appropriate H\"{o}lder regularity of the solution to complete the well-posedness of the equations \eqref{eqn:SPDE1}  and
\eqref{eqn:SPDE2}. Due to the conditions $(A4)$ and $(A4^\prime)$, it means that the H\"{o}lder regularity of the solution must be less than $1/2$. Motivated by the property of Young
integral  and the regularity in Refs. \cite{MR3996912,MR4594461}, we make the H\"{o}lder regularity of noise  better  than the H\"{o}lder regularity of the solution. It is very
important to get the random attractors. For the rough noise ($\frac{1}{3}<H\leq \frac{1}{2}$), as we have described, it is very diffcult to get the wanted H\"{o}lder regularity. We
shall try this work in the future.

The generality of the framework of locally monotone SPDEs \eqref{eqn:SPDE1} and \eqref{eqn:SPDE2} driven by the linear multiplicative fBm has several  advantages: Firstly, compared
with  the semigroup method and variational method(Refs. \cite{MR3072986,MR4431448,MR4097587,MR4254494,MR3918521}),  our method  is more concise. For example, we do not need to obtain
the  existence of solution via the approximation method.  Secondly, we can deal with more complicated models as in Refs. \cite{MR2812588,MR4097253,MR3027636}.  Thirdly, it is not
necessary to impose more strictly condition as in Refs. \cite{MR3226746,MR4385780,MR4608383,MR4383300,cao2022wongzakai,MR2738732}, such as, small Lipschitz conditions on the drift
term.
Finally, we do not need  to construct an O-U process, and  consider the strong  monotone conditions  which  are imposed for O-U process in Refs.
\cite{MR2812588,MR4097253,MR3027636}.

The paper is organized as follows: In Section \ref{sec:setup},  we will give some  assumptions on the coefficients and recall Young integral. Section \ref{well-posedness} is devoted
to construct the global variational solutions of the equations \eqref{eqn:SPDE1} and \eqref{eqn:SPDE2}. In Section \ref{sec:construction}, we first construct the random dynamical
system $\varphi$ for the system \eqref{eqn:SPDE1} and the continuous cocycle $\Phi$ for the system \eqref{eqn:SPDE2}. Combining with the existence of a random bounded absorbing set
and the precompactness of $\varphi$ and $\Phi$,  we can obtain the existence of a random attractor. Some examples to the SPDEs are given in Section \ref{example}.  In Appendix
\ref{app:rds}, we will recall the theory of random dynamical systems.
\section{Preliminaries}\label{sec:setup}

Let $(\Omega,\mathcal{F},\mathbb{P},\{\theta_{ t}\}_{t\in\R})$ be a metric dynamical system for the fractional Brownian motion, it is introduced in Section \ref{sec:construction}.
Let $T>0$ be any positive real number. Let $(H,\<\cdot,\cdot\>_{H})$ be a real separable Hilbert space, identified with its dual space $H^{*}$ by the Riesz isomorphism.
Let $V$ be a real reflexive Banach space, it is continuously and densely embedded into $H$.
In particular, there is a constant $\l>0$ such that $ \l \|v\|_{H}^{2}\le \|v\|_{V}^{2}$ for all $v\in V$. Then we have the following Gelfand triple
\[
V\subseteq H\equiv H^{*}\subseteq V^{*}.
\]
If $_{V^{*}}\<\cdot,\cdot\>_{V}$ denotes the dualization between $V$ and its dual space $V^{*}$, then
\[
_{V^{*}}\<u,v\>_{V}=\<u,v\>_{H},\quad\forall u\in H,v\in V.
\]

As mentioned in the introduction, we consider the following SPDEs of the form
\begin{equation}
	du_{t}=A(u_{t})dt+\beta u_{t}d\omega_{t},\label{eqn:spde2}
\end{equation}
and
\begin{equation}
	du_{t}=A(t,u_{t})dt+\beta u_{t}d\omega_{t},\label{eqn:spde3}
\end{equation}
%and rough SPDE of the form
%\begin{equation}
%	du_{t}=A(u_{t})dt+\beta^\prime u_{t}d\bm{\omega}_{t},\label{eqn:spde2-1}
%\end{equation}
where $A: V\to V^{*}$ is  $\mathcal{B}(V)/\mathcal{B}(V^*)$  measurable in the equation \eqref{eqn:spde2}, $A: \mathbb{R}\times V\to V^{*}$ is  $(\mathcal{B}(\mathbb
R)\times\mathcal{B}(V))/\mathcal{B}(V^*)$  measurable in the equation \eqref{eqn:spde3},  and $\beta\in\mathbb{R}$, $\omega$ is the realization of the fractional Brownian motion
$B^{H}(t,\omega):=\omega_t$ with Hurst index $H>\frac{1}{2}$, and $t\in\mathbb{R},\omega\in\Omega$.
%In addition, $\bm{\omega}:=(\omega,\bbomega)$ is the canonical lift of the fractional Brownian motion with Hurst index $H\in(\frac{1}{3},\frac{1}{2}]$.

For the coefficient $A$ in the equation \eqref{eqn:spde2}. Suppose that for some $\alpha\ge 2$ and $\varpi\ge0$, there exist some constants $C,K\ge0$,$\gamma>0$
such that the following conditions hold for all $v,v_{1},v_{2}\in V$:
\begin{enumerate}
	\item [$(A1)$](Hemicontinuity) The map $s\mapsto{}_{V^{*}}\<A(v_{1}+sv_{2}),v\>_{V}$ is continuous on $\mathbb{R}$.
	\item [$(A2)$](Local monotonicity)
	\[
	2{}_{V^{*}}\<A(v_{1})-A(v_{2}),v_{1}-v_{2}\>_{V}\le\left(C+\eta(v_{1})+\rho(v_{2})\right)\|v_{1}-v_{2}\|_{H}^{2},
	\]
	% \com{Since we use det. existence results we could allow full local monotonicity here.}
	where $\eta,\rho:V\rightarrow\R_{+}$ are locally bounded measurable functions.
	\item [$(A3)$](Coercivity)
	\[
	2{}_{V^{*}}\<A(v),v\>_{V}\le-\gamma\|v\|_{V}^{\alpha}+K\|v\|_{H}^{2}+C.
	\]
	
	\item [$(A4)$](Growth) %  \com{Note that so far we need this extra restriction}
	
	\[
	\|A(v)\|_{V^{*}}^{\frac{\a}{\a-1}}\le C(1+\|v\|_{V}^{\alpha})(1+\|v\|_{H}^{\varpi}).
	\]
	
\end{enumerate}
In addition, for the nonautonomous operator $A(t,u_t)$ in the equation \eqref{eqn:spde3}, we impose the similar conditions as follows:

Assume that for some $\alpha\ge 2,\varpi\ge0,\tau\in\R$, there exist some constant $c,C \ge 0$ and functions $f,g\in L^{1}([\tau,\tau+T];\mathbb{R})$ such that the following
conditions hold for all $t\in[\tau,\tau+T]$ and $v,v_{1},v_{2}\in V$:
\begin{enumerate}
	\item [$(A1^\prime)$] (Hemicontinuity) The map $s\mapsto{}_{V^{*}}\<A(t,v_{1}+sv_{2}),v\>_{V}$ is continuous on $\mathbb{R}$.
	\item [$(A2^\prime)$] (Local monotonicity)
	\[
	2{}_{V^{*}}\<A(t,v_{1})-A(t,v_{2}),v_{1}-v_{2}\>_{V}\le\left(f(t)+\eta(v_{1})+\rho(v_{2})\right)\|v_{1}-v_{2}\|_{H}^{2},
	\]
	where $\eta,\rho:V\rightarrow[0,+\infty)$ are measurable and locally bounded functions.
	\item [$(A3^\prime)$] (Coercivity)
	\[
	2{}_{V^{*}}\<A(t,v),v\>_{V}\le-c\|v\|_{V}^{\alpha}+g(t)\|v\|_{H}^{2}+f(t)
	\]
	\item [$(A4^\prime)$] (Growth)
	\[
	\|A(t,v)\|_{V^{*}}^{\frac{\alpha}{\alpha-1}}\le C\bigg(f(t)+\|v\|_{V}^{\alpha}\bigg)\bigg(1+\|v\|_{H}^{\varpi}\bigg).
	\]
\end{enumerate}
Let us now define the variational solution to \eqref{eqn:spde2} and \eqref{eqn:spde3}.
\begin{de}
	\label{def:soln_pathw} A continuous $H$-valued process $\{u(t,\omega,u_{0})\}_{t \in [0,T]}$  with initial condition $u_{0}$ at time $0$  is a variational  solution to
	\eqref{eqn:spde2} if for all $\o\in\Omega$, $u(t,\o,u_{0})\in L^{\a}([0,T];V)$ and for any $v\in V, t\in[0,T],T>0$, the following holds
	\begin{align*}
		\<u(t,\o,u_{0}),v\>_{H}=\<u_{0},v\>_{H}+\int_{0}^{t} {}_{V^{*}}\<A(u(r,\o,u_0),v\>_{V}dr\\
		+\int_{0}^{t}\beta \<u(r,\omega,u_0),v\>_{H}d\o_{r}.
	\end{align*}
	%and
	%\[
	%\<u(t,\o,u_{0}),v\>_{H}=\<u_{0},v\>_{H}+\int_{0}^{t} {}_{V^{*}}\<A(u(r,\o,u_0),v\>_{V}dr+\int_{0}^{t}\beta^\prime \<u(r,\omega,u_0),v\>_{H}d\bm\o_{r},
	%\]
	Further, the continuous $H$-valued process $\{u(t,\tau,\omega,u_\tau)\}_{t\geq \tau}$ with initial data $u_\tau$ at time $\tau\in\R$ is a variational solution to
\eqref{eqn:spde3} if for all $\omega\in\Omega,u(t,\tau,\omega,u_\tau)\in L^{\alpha}([\tau,\tau+T];V)$ and for any $v\in V,t\geq \tau,T>0$, the following holds
	\begin{align*}
		\<u(t,\tau,\o,u_{\tau}),v\>_{H}=\<u_{\tau},v\>_{H}+\int_{\tau}^{t} {}_{V^{*}}\<A(u(r,\tau,\o,u_0),v\>_{V}dr\\
		+\int_{\tau}^{t}\beta \<u(r,\tau,\omega,u_0),v\>_{H}d\o_{r}.
	\end{align*}
\end{de}
\begin{rem}
	The integrals $\int_{0}^{t}\beta \<u(r,\omega,u_0),v\>_{H}d\o_{r}$ and   $\int_{0}^{t}\beta \<u(r,\tau,\omega,u_0),v\>_{H}d\o_{r}$ are Young integral($H>\frac{1}{2}$), then for
every fixed $\omega\in\Omega$ and $\tau\in\mathbb{R}$, it means that $\<u(t,\omega,u_0),v\>_{H}$ and $\<u(t,\tau,\omega,u_\tau),v\>_{H}$ are locally H\"{o}lder continuity with
respect to variable $t\in[0,\infty)$ and $t\in[\tau,\infty)$, respectively.
\end{rem}
\begin{thm}{\cite{MR4174393}}\label{Young est}
	Let $V$ and $W$ be  two Banach spaces.	For any  $\zeta$-H\"{o}lder path $x_t$  in $V$ and $\xi$-H\"{o}lder path $y_t$ in $\mathcal{L}(V,W)$, the space
	of bounded linear operators from $V$ into some Banach space $W$. Then Young integral can be defined as follows:
	\[
	\int_s^t y_u dx_u := \lim \limits_{|\Pi| \to 0} \sum_{[u,v] \in \Pi} \Big( y_{u}  \otimes  x_{u,v}  \Big),
	\]
	where the limit is taken on all the finite partition $\Pi$ of $[s,t]$ with $|\Pi| := \displaystyle\max_{[u,v]\in \Pi} |v-u|$, and $x_{u,v}=x_v-x_u$. In addition,
	the integral has the following estimate:
	\begin{eqnarray*}
		\left\|\int_{s}^{t}x_rdy_r-x_sy_{s,t}\right\|\leq C\|x\|_{\zeta,[s,t]}\|y\|_{\xi,[s,t]}(t-s)^{\zeta+\xi},\quad  s,t\in\mathbb{R},
	\end{eqnarray*}
	where $\zeta+\xi>1$, $y_{s,t}=y_t-y_s$ and the constant  $C$ depends on  $\zeta,\xi$.
\end{thm}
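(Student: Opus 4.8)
The plan is to prove the estimate by the classical Young construction, that is, by the one–dimensional sewing (partition–coarsening) argument. Fix $s<t$ and, adopting the convention of the displayed estimate (so that $x$ is the $\mathcal{L}(V,W)$–valued path and $y$ the $V$–valued path), associate to every subinterval $[u,v]\subseteq[s,t]$ the one–step approximation $\Xi_{u,v}:=x_u\,y_{u,v}\in W$ that appears in the Riemann sums, and for a partition $\Pi=\{s=r_0<\cdots<r_N=t\}$ set $S(\Pi):=\sum_{i=0}^{N-1}\Xi_{r_i,r_{i+1}}$. First I would record the elementary ``almost additivity'' of $\Xi$: for $u<w<v$ one has
\[
\Xi_{u,v}-\Xi_{u,w}-\Xi_{w,v}=(x_u-x_w)\,y_{w,v},
\]
whence, by the H\"older bounds for $x$ and $y$,
\[
\|\Xi_{u,v}-\Xi_{u,w}-\Xi_{w,v}\|\le \|x\|_{\zeta,[s,t]}\|y\|_{\xi,[s,t]}\,(w-u)^{\zeta}(v-w)^{\xi}\le \|x\|_{\zeta,[s,t]}\|y\|_{\xi,[s,t]}\,(v-u)^{\zeta+\xi}.
\]

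Next I would run the coarsening step. If $\Pi$ has $N\ge 2$ subintervals, a pigeonhole count on the $N-1$ quantities $r_{i+1}-r_{i-1}$ (whose sum is at most $2(t-s)$) yields an interior point $r_j$ with $r_{j+1}-r_{j-1}\le \tfrac{2}{N-1}(t-s)$; deleting $r_j$ changes $S(\Pi)$ exactly by $\Xi_{r_{j-1},r_{j+1}}-\Xi_{r_{j-1},r_j}-\Xi_{r_j,r_{j+1}}$, which the inequality above bounds by $\|x\|_{\zeta,[s,t]}\|y\|_{\xi,[s,t]}\big(2(t-s)/(N-1)\big)^{\zeta+\xi}$. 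Iterating down to the trivial partition $\{s,t\}$ and summing the incurred errors,
\[
\|S(\Pi)-\Xi_{s,t}\|\le \|x\|_{\zeta,[s,t]}\|y\|_{\xi,[s,t]}\,(2(t-s))^{\zeta+\xi}\sum_{M\ge 1}M^{-(\zeta+\xi)},
\]
and the series converges precisely because $\zeta+\xi>1$. This is already the asserted estimate with $C=2^{\zeta+\xi}\sum_{M\ge 1}M^{-(\zeta+\xi)}$, uniformly over partitions, hence also for the Young integral once the limit is taken. To see that the limit defining $\int_s^t x_r\,dy_r$ exists and does not depend on the partition sequence, I would compare a partition $\Pi$ with any refinement $\Pi'$ through the per–subinterval version of the bound above, obtaining $\|S(\Pi')-S(\Pi)\|\le C\,\|x\|_{\zeta,[s,t]}\|y\|_{\xi,[s,t]}\sum_i(\Delta_i)^{\zeta+\xi}\le C\,\|x\|_{\zeta,[s,t]}\|y\|_{\xi,[s,t]}\,|\Pi|^{\zeta+\xi-1}(t-s)\to 0$ as $|\Pi|\to 0$; thus $S(\Pi)$ is Cauchy in the Banach space $W$ along mesh–decreasing sequences with a common limit, and restricting to $[s,t]$ returns the displayed inequality.

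The main obstacle is the coarsening estimate itself — choosing the right point to remove and, above all, organizing the iteration so that the accumulated errors assemble into the convergent series $\sum_M M^{-(\zeta+\xi)}$; this is exactly where the hypothesis $\zeta+\xi>1$ is used in an essential way (for $\zeta+\xi\le 1$ the sum diverges and the scheme collapses, which is why the genuinely rough regime requires additional structure). The remaining ingredients — the algebraic identity for $\Xi$, the H\"older estimates, and the passage to the limit in $W$ — are routine bookkeeping.
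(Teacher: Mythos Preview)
Your argument is correct and is precisely the classical Young--sewing (partition--coarsening) proof; the paper itself does not prove this theorem but merely quotes it from Friz--Hairer \cite{MR4174393}, where essentially the same argument appears. There is nothing to compare: you have reproduced the standard proof from the cited source.
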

In order to solve  SPDEs \eqref{eqn:spde2} and \eqref{eqn:spde3}, we need to introduce a transformation.  Let $z_t(\omega)=e^{-\beta\omega_{t}}$ and
$z_t^{-1}(\omega)=e^{\beta\omega_t}$. Since the fractional Brownian motion $\omega$ has $\gamma_1$-H\"{o}lder continuous version, where $\frac{1}{2}<\gamma_1<H$, i.e. $\omega\in
C^{\gamma_1}([\tau,\tau+T];\mathbb{R})$ for any $T> 0, \tau\in\mathbb{R}$. Then  we have the following lemma.
\begin{lem}\label{Trans noise}
	Let $\b\in\mathbb{R},\tau\in\mathbb{R},T\geq 0$, then $z_t$ and $z_t^{-1}$ satisfy the following stochastic differential equations
	\begin{align*}
		dz_t=-\beta z_t d\omega_t,\quad t\in[\tau,\tau+T]
	\end{align*}
	and
	\begin{align*}
		dz_t^{-1}=\beta z_t^{-1} d\omega_t,\quad t\in[\tau,\tau+T],
	\end{align*}
	respectively.
\end{lem}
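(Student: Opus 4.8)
The plan is to establish both identities by the second-order chain rule for Young integrals, using only the Riemann-sum definition and the compensated estimate recalled in Theorem~\ref{Young est}. I treat $z_t=e^{-\beta\omega_t}$ in detail; the argument for $z_t^{-1}=e^{\beta\omega_t}$ is identical upon replacing $\beta$ by $-\beta$.

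First I would check that both sides of the claimed equation are well-defined. On the compact interval $[\tau,\tau+T]$ the path $\omega$ is bounded and $\gamma_1$-Hölder, and $x\mapsto e^{-\beta x}$ is Lipschitz on the range of $\omega|_{[\tau,\tau+T]}$; hence $z_\cdot$ is itself $\gamma_1$-Hölder on $[\tau,\tau+T]$, with $\|z\|_{\gamma_1}\le C\bigl(\beta,\sup_{[\tau,\tau+T]}|\omega_r|\bigr)\,\|\omega\|_{\gamma_1}$. Since $\gamma_1+\gamma_1=2\gamma_1>1$, Theorem~\ref{Young est} applies with integrand $z_\cdot$ and integrator $\omega$, so $\int_s^t z_r\,d\omega_r$ exists for all $\tau\le s\le t\le\tau+T$, and the assertion $dz_t=-\beta z_t\,d\omega_t$ is read as $z_t-z_s=-\beta\int_s^t z_r\,d\omega_r$ for all such $s,t$.

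Next, fix $\tau\le s\le t\le\tau+T$ and a partition $\Pi=\{s=u_0<\dots<u_n=t\}$. Telescoping and applying the second-order Taylor formula for $F(x)=e^{-\beta x}$ on each subinterval $[u_i,u_{i+1}]$,
\[
z_t-z_s=\sum_{i=0}^{n-1}\bigl(z_{u_{i+1}}-z_{u_i}\bigr)
=-\beta\sum_{i=0}^{n-1}z_{u_i}(\omega_{u_{i+1}}-\omega_{u_i})
+\frac{\beta^2}{2}\sum_{i=0}^{n-1}e^{-\beta\zeta_i}(\omega_{u_{i+1}}-\omega_{u_i})^2,
\]
where each $\zeta_i$ lies between $\omega_{u_i}$ and $\omega_{u_{i+1}}$. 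As $|\Pi|\to0$ the first sum converges to $-\beta\int_s^t z_r\,d\omega_r$: applying the estimate of Theorem~\ref{Young est} on each $[u_i,u_{i+1}]$ gives $\bigl|\int_s^t z_r\,d\omega_r-\sum_i z_{u_i}(\omega_{u_{i+1}}-\omega_{u_i})\bigr|\le C\|z\|_{\gamma_1}\|\omega\|_{\gamma_1}\,|\Pi|^{2\gamma_1-1}(t-s)\to0$. For the remainder, writing $M:=\exp\bigl(|\beta|\sup_{[\tau,\tau+T]}|\omega_r|\bigr)$,
\[
\Bigl|\frac{\beta^2}{2}\sum_{i=0}^{n-1}e^{-\beta\zeta_i}(\omega_{u_{i+1}}-\omega_{u_i})^2\Bigr|
\le\frac{\beta^2 M}{2}\,\|\omega\|_{\gamma_1}^2\sum_{i=0}^{n-1}|u_{i+1}-u_i|^{2\gamma_1}
\le\frac{\beta^2 M}{2}\,\|\omega\|_{\gamma_1}^2\,|\Pi|^{2\gamma_1-1}(t-s)\longrightarrow0,
\]
since $2\gamma_1-1>0$. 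Letting $|\Pi|\to0$ yields $z_t-z_s=-\beta\int_s^t z_r\,d\omega_r$, i.e.\ $dz_t=-\beta z_t\,d\omega_t$; the same computation with $\beta$ replaced by $-\beta$ gives $dz_t^{-1}=\beta z_t^{-1}\,d\omega_t$.

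There is no serious obstacle: this is the classical Young chain rule, and the only two points needing care are the $\gamma_1$-Hölder regularity of the composition $z_\cdot$ (so that Theorem~\ref{Young est} is applicable, and the left-endpoint Riemann sums genuinely converge to the Young integral) and the control of the quadratic Taylor remainder. The latter is exactly where $\gamma_1>\tfrac12$---equivalently $H>\tfrac12$---is used: for $H\le\tfrac12$ the sums $\sum_i(\omega_{u_{i+1}}-\omega_{u_i})^2$ need not vanish and an Itô-type correction term would survive in the limit. One could instead invoke an off-the-shelf chain-rule lemma for Young integrals, but the self-contained argument above stays within the Riemann-sum framework already set up in Theorem~\ref{Young est}.
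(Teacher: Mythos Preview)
Your proof is correct and follows essentially the same approach as the paper: telescope $z_t-z_s$, apply a second-order Taylor expansion of $e^{-\beta x}$ on each subinterval, show the quadratic remainder vanishes because $2\gamma_1>1$, and identify the linear sum with the Young integral via Theorem~\ref{Young est}. Your presentation is in fact slightly cleaner---you bound $e^{-\beta\zeta_i}$ uniformly by $M=\exp(|\beta|\sup_{[\tau,\tau+T]}|\omega_r|)$ rather than splitting into three cases on the sign of $\tau$ and $\tau+T$ as the paper does, and you explicitly verify the $\gamma_1$-H\"older regularity of $z$ and the convergence of the Riemann sums, which the paper leaves implicit.
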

\begin{proof}
	We only check that the stochastic differential equation for $z_t(\omega)=e^{-\beta \omega_t}$, the same method can be applied to $z_t^{-1}(\omega)=e^{\beta \omega_t}$. For any
$t\in[\tau,\tau+T]$, let $\mathcal{P}(\tau,t)$ be an arbitrary partition of the interval $[\tau,t]$, we have
	\begin{align*}
		e^{-\beta\omega_t}-e^{-\beta\omega_{\tau}}&=\sum_{[u,v]\in\mathcal{P}(\tau,t)}(e^{-\beta\omega_v}-e^{-\beta\omega_u})\\
		&=\sum_{[u,v]\in\mathcal{P}(\tau,t)}(-\beta e^{-\beta\omega_u}(\omega_v-\omega_u))\\
		&\quad+\sum_{[u,v]\in\mathcal{P}(\tau,t)}\left(\frac{1}{2}{\beta}^2e^{-\beta(\theta\omega_u+(1-\theta)\omega_v))}(\omega_v-\omega_u)^2\right),
	\end{align*}
	where $\theta\in [0,1]$. Note that $\omega_t$ is a continuous function  with respect to variable $t$, then there exists a  $u^{\prime}\in [u,v]$ such that
	$\theta\omega_u+(1-\theta)\omega_v=\omega_{u^\prime}$. We claim that  there exists a constant $C:=C(\beta,\omega,\tau,T)>0$ such that
	\begin{align*}
		\left|\frac{1}{2}{\beta}^2e^{-\beta\omega_{u^\prime}}(\omega_v-\omega_u)^2\right|\leq C(v-u)^{2\gamma_1}.
	\end{align*}
	Indeed, if  $\tau+T\leq 0$, then
	\begin{align*}		\left|\frac{1}{2}{\beta}^2e^{-\beta\omega_{u^\prime}}(\omega_v-\omega_u)^2\right|\leq\frac{1}{2}{\beta}^2e^{|\beta|\|\omega\|_{\gamma_1,[\tau,0]}|\tau|^{\gamma_1}}\|\omega\|^2_{\gamma_1,[\tau,\tau+T]}(v-u)^{2\gamma_1}.
	\end{align*}
	If $\tau+T>0,\tau<0$, then
	\begin{align*} \left|\frac{1}{2}{\beta}^2e^{-\beta\omega_{u^\prime}}(\omega_v-\omega_u)^2\right|\leq\frac{1}{2}{\beta}^2e^{|\beta|\|\omega\|_{\gamma_1,[\tau,\tau+T]}T^{\gamma_1}}\|\omega\|^2_{\gamma_1,[\tau,\tau+T]}(v-u)^{2\gamma_1}.
	\end{align*}
	If $\tau+T>0,\tau\geq 0$, then
	\begin{align*} \left|\frac{1}{2}{\beta}^2e^{-\beta\omega_{u^\prime}}(\omega_v-\omega_u)^2\right|\leq\frac{1}{2}{\beta}^2e^{|\beta|\|\omega\|_{\gamma_1,[0,\tau+T]}(T+\tau)^{\gamma_1}}\|\omega\|^2_{\gamma_1,[\tau,\tau+T]}(v-u)^{2\gamma_1}.
	\end{align*}
	Therefore, there exists a uniform constant $$C=\frac{1}{2}{\beta}^2\|\omega\|^2_{\gamma_1,[\tau,\tau+T]}\left(e^{|\beta|\|\omega\|_{\gamma_1,[\tau,0]}\tau^{\gamma_1}}\vee
e^{|\beta|\|\omega\|_{\gamma_1,[\tau,\tau+T]}T^{\gamma_1}}\vee e^{|\beta|\|\omega\|_{\gamma_1,[0,\tau+T]}(T+\tau)^{\gamma_1}} \right)$$
	for each $[u,v]\in\mathcal{P}(\tau,t)$. Due to the above discussions, we know that
	\begin{align*}
		\lim_{|\mathcal{P}(\tau,t)|\rightarrow
0}\sum_{[u,v]\in\mathcal{P}(\tau,t)}&\left(\frac{1}{2}{\beta}^2e^{-\beta(\theta\omega_u+(1-\theta)\omega_v))}(\omega_v-\omega_u)^2\right)\\
		&\leq \lim_{|\mathcal{P}(\tau,t)|\rightarrow 0}\sum_{[u,v]\in\mathcal{P}(\tau,t)}C(v-u)^{2\gamma_1}\\
		&\leq CT\lim_{|\mathcal{P}(\tau,t)|\rightarrow 0}|\mathcal{P}(\tau,t)|^{2\gamma_1-1}\\
		&=0.
	\end{align*}
	By Theorem  \ref{Young est},  we have
	\begin{align*}
		e^{-\beta\omega_t}-e^{-\beta\omega_{\tau}}&=\lim_{|\mathcal{P}(\tau,t)|\rightarrow 0}\sum_{[u,v]\in\mathcal{P}(\tau,t)}(e^{-\beta\omega_v}-e^{-\beta\omega_u})\\
		&=\lim_{|\mathcal{P}(\tau,t)|\rightarrow 0}\sum_{[u,v]\in\mathcal{P}(\tau,t)}(-\beta e^{-\beta\omega_u}(\omega_v-\omega_u))\\
		&=\int_{\tau}^{t}-\beta e^{-\beta \omega_r}d\omega_r,
	\end{align*}
	it means that $	dz_t=-\beta z_td\omega_t$.
\end{proof}
\begin{lem}\label{chain rule}
	Let $\omega_t$ be a real-valued fractional Brownian motion with $\gamma_1$-H\"{o}lder continuous version, and $X_t\in C^{\gamma_1}([\tau,\tau+T],\mathbb{R})$ and $Y_t\in
C^{\gamma_2}([\tau,\tau+T],\mathbb{R}^m)$ satisfy the following stochastic integral  equations
	\begin{align*}
		Y_t=\Gamma_t+\int_{\tau}^{t}g(s)\rm d\omega_s
	\end{align*}
	and
	\begin{align*}
		X_t=\Lambda_t+\int_{\tau}^{t}q(s)\rm d\omega_s,
	\end{align*}
	respectively, where $\Gamma$ and $\Lambda$ are drift terms and $\Gamma\in C^{\gamma_2}([\tau,\tau+T],\mathbb{R})$, $\Lambda\in C^{\gamma_1}([\tau,\tau+T],\mathbb{R}^m)$,  $g$ and
$q$  are $\gamma_2$-H\"{o}lder  continuous function and $\gamma_1$-H\"{o}lder  continuous function, where  $\frac{1}{2}<\gamma_1<H$ and $\gamma_2+\gamma_1>1$, respectively.  Then we
have
	\begin{align*}
		\rm d(XY)_t&=X_td Y_t+Y_td X_t\\
		&=(X_td \Gamma_t+Y_td\Lambda_t)+(X_tg_t+Y_tq_t)d \omega_t.
	\end{align*}
\end{lem}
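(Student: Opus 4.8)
The plan is first to obtain the Leibniz identity $X_tY_t-X_\tau Y_\tau=\int_\tau^tY_s\,dX_s+\int_\tau^tX_s\,dY_s$ straight from the Riemann-sum definition of the Young integral (Theorem \ref{Young est}), and then to rewrite the two integrals on the right by substituting the defining equations $dX_s=d\Lambda_s+q_s\,d\omega_s$ and $dY_s=d\Gamma_s+g_s\,d\omega_s$, using a substitution rule for Young integrals that is itself a direct consequence of Theorem \ref{Young est}. Before doing so I would record the regularity at play: by the estimate in Theorem \ref{Young est}, $\big|\int_u^v g_s\,d\omega_s-g_u\,\omega_{u,v}\big|\le C\|g\|_{\gamma_2,[u,v]}\|\omega\|_{\gamma_1,[u,v]}(v-u)^{\gamma_1+\gamma_2}$ for $\tau\le u<v\le\tau+T$, and together with $|\omega_{u,v}|\le\|\omega\|_{\gamma_1}(v-u)^{\gamma_1}$ and $v-u\le T$ this shows $t\mapsto\int_\tau^t g_s\,d\omega_s$ is $\gamma_1$-Hölder, hence $Y\in C^{\gamma_3}([\tau,\tau+T])$ with $\gamma_3:=\min\{\gamma_1,\gamma_2\}$, while $X\in C^{\gamma_1}([\tau,\tau+T])$. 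Since $\gamma_1>\tfrac12$ and $\gamma_1+\gamma_2>1$ we get $\gamma_1+\gamma_3>1$, so all Young integrals in the statement ($\int Y\,dX$, $\int X\,dY$, $\int X\,d\Gamma$, $\int Y\,d\Lambda$, $\int Xg\,d\omega$, $\int Yq\,d\omega$) are well defined, the last two because $Xg$ and $Yq$ are $\gamma_3$-Hölder and $\gamma_3+\gamma_1>1$.

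For the Leibniz rule, fix $t\in[\tau,\tau+T]$ and a partition $\mathcal P$ of $[\tau,t]$, telescope $X_tY_t-X_\tau Y_\tau=\sum_{[u,v]\in\mathcal P}(X_vY_v-X_uY_u)$, and expand each increment as $X_vY_v-X_uY_u=Y_u(X_v-X_u)+X_u(Y_v-Y_u)+(X_v-X_u)(Y_v-Y_u)$. The sum of the cross terms is bounded by $\|X\|_{\gamma_1}\|Y\|_{\gamma_3}(t-\tau)\,|\mathcal P|^{\gamma_1+\gamma_3-1}$, which tends to $0$ since $\gamma_1+\gamma_3>1$; the other two sums are exactly the left Riemann sums defining $\int_\tau^tY_s\,dX_s$ and $\int_\tau^tX_s\,dY_s$ and therefore converge to them. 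Letting $|\mathcal P|\to0$ yields $X_tY_t-X_\tau Y_\tau=\int_\tau^tY_s\,dX_s+\int_\tau^tX_s\,dY_s$, that is, $d(XY)_t=X_t\,dY_t+Y_t\,dX_t$.

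For the explicit form I would establish the substitution rule: if $W_t=\Delta_t+\int_\tau^t h_s\,d\omega_s$ with $\Delta,h$ being $\gamma$-Hölder and $\gamma+\gamma_1>1$, and $Z$ is $\gamma'$-Hölder with $\gamma'+\gamma>1$ and $\gamma'+\gamma_1>1$, then $\int_\tau^tZ_s\,dW_s=\int_\tau^tZ_s\,d\Delta_s+\int_\tau^tZ_sh_s\,d\omega_s$. This follows by writing, inside the Riemann sum $\sum Z_u(W_v-W_u)$, the increment $W_v-W_u=(\Delta_v-\Delta_u)+h_u\omega_{u,v}+R_{u,v}$ with $|R_{u,v}|\le C\|h\|_{\gamma}\|\omega\|_{\gamma_1}(v-u)^{\gamma+\gamma_1}$ from Theorem \ref{Young est}; then $\sum Z_uR_{u,v}=O(|\mathcal P|^{\gamma+\gamma_1-1})\to0$, $\sum Z_u(\Delta_v-\Delta_u)\to\int_\tau^tZ_s\,d\Delta_s$, and $\sum Z_uh_u\omega_{u,v}=\sum(Zh)_u\omega_{u,v}\to\int_\tau^tZ_sh_s\,d\omega_s$. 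Applying this with $(W,\Delta,h,Z)=(X,\Lambda,q,Y)$ and with $(W,\Delta,h,Z)=(Y,\Gamma,g,X)$ — both admissible since every relevant exponent sum exceeds $1$ — and adding, we get $\int_\tau^tY_s\,dX_s+\int_\tau^tX_s\,dY_s=\big(\int_\tau^tX_s\,d\Gamma_s+\int_\tau^tY_s\,d\Lambda_s\big)+\int_\tau^t(X_sg_s+Y_sq_s)\,d\omega_s$, which combined with the Leibniz rule gives the assertion. The main technical obstacle is the exponent bookkeeping: one must track the Hölder exponents of $X$, $Y$, $Xg$, $Yq$ and verify that every remainder term produced by Theorem \ref{Young est} carries an exponent strictly larger than $1$, so that it vanishes in the partition limit; once the inequalities built on $\gamma_1+\gamma_3>1$ are in hand, the remainder is a routine chain of Riemann-sum limits relying only on Theorem \ref{Young est} and linearity of the Young integral.
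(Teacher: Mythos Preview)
Your proof is correct and follows essentially the same Riemann-sum strategy as the paper: telescope $X_tY_t-X_\tau Y_\tau$ over a partition, use the Young estimate (Theorem~\ref{Young est}) to show the higher-order remainders vanish, and identify the surviving sums as Young integrals. The only cosmetic difference is that you separate the argument into an abstract Leibniz step followed by a substitution step, while the paper does both at once by inserting the increment expansions $Y_{u,v}=\Gamma_{u,v}+g(u)\omega_{u,v}+o(|v-u|)$ and $X_{u,v}=\Lambda_{u,v}+q(u)\omega_{u,v}+o(|v-u|)$ directly into the telescoped sum; your explicit exponent bookkeeping via $\gamma_3=\min\{\gamma_1,\gamma_2\}$ is in fact more careful than the paper's.
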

\begin{proof}
	By the definition of Young integral, $X_t$ and $Y_t$ has the following increment forms:
	\begin{equation}\label{increment1}
		\begin{aligned}
			Y_{s,t}=\Gamma_{s,t}+g(s)\omega_{s,t}+o(|t-s|),
		\end{aligned}
	\end{equation}
	\begin{equation}\label{increment2}
		\begin{aligned}
			X_{s,t}=\Lambda_{s,t}+q(s)\omega_{s,t}+o(|t-s|).
		\end{aligned}
	\end{equation}
	According to \eqref{increment1} and \eqref{increment2}, we have
	\begin{align*}
		Y_{t}X_{t}-Y_{\tau}X_{\tau}&=\sum_{[u,v]\in\mathcal{P}(\tau,t)}(-Y_vX_{v,u}+Y_{u,v}X_u)\\
		&=\sum_{[u,v]\in\mathcal{P}(\tau,t)}-Y_v(\Lambda_{v,u}+q(v)\omega_{v,u}+o(|v-u|))\\
		&\quad+\sum_{[u,v]\in\mathcal{P}(\tau,t)}X_u(\Gamma_{u,v}+g(u)\omega_{u,v}+o(|u-v|)).
	\end{align*}
	Let $|\mathcal{P}(\tau,t)|\rightarrow 0$, then $\sum_{[u,v]\in\mathcal{P}(\tau,t)}o(|v-u|)\rightarrow 0$. Thus, by the definition of Young integral, we obtain
	\begin{align*}
		Y_{t}X_{t}-Y_{\tau}X_{\tau}&=\lim_{|\mathcal{P}(\tau,t)|\rightarrow 0}\sum_{[u,v]\in\mathcal{P}(\tau,t)}-Y_v(\Lambda_{v,u}+q(v)\omega_{v,u})\\
		&\quad+\lim_{|\mathcal{P}(\tau,t)|\rightarrow 0}\sum_{[u,v]\in\mathcal{P}(\tau,t)}X_u(\Gamma_{u,v}+g(u)\omega_{u,v})\\
		&=\lim_{|\mathcal{P}(\tau,t)|\rightarrow 0}\sum_{[u,v]\in\mathcal{P}(\tau,t)}-(Y_v-Y_u+Y_u)   (\Lambda_{v,u}+q(v)\omega_{v,u})\\
		&\quad+\lim_{|\mathcal{P}(\tau,t)|\rightarrow 0}\sum_{[u,v]\in\mathcal{P}(\tau,t)}X_u(\Gamma_{u,v}+g(u)\omega_{u,v})\\
		&=\int_{\tau}^{t}Y_rq_rd \omega_r+\int_{\tau}^{t}X_rg(r)d \omega_{r}+\int_{\tau}^{t}Y_rd \Lambda_r+\int_{\tau}^{t}X_rd\Gamma_r,
	\end{align*}
	it means that $	\rm d(XY)_t=X_td Y_t+Y_td X_t$.
\end{proof}
Let $\tilde{u}(t)=z_t(\omega)u(t)$, if $\<\tilde{u}(\cdot,\omega,u_0),v\>_{H}$ is a $\gamma_2$-H\"{o}lder continuous function on $[0,T]$ and $\gamma_1+\gamma_2>1$, then applying
Lemma \ref{chain rule} to $\<\tilde{u}(t,\omega,u_0),v\>_{H}$, we have
\[
\<\tilde{u}(t),v\>_{H}=\<u_0,v\>_{H}+\int_{0}^{t}e^{-\beta\omega_r}{}_{V^{*}}\<A(z_r^{-1}(\omega)\tilde{u}(r)),v\>_{V}dr,\quad \forall v\in V,
\]
it is the variational  formulation of the following random partial differential equations
\begin{equation}\label{random PDE}
	d\tilde{u}(t)=z_t(\omega)A(z_t^{-1}(\omega)\tilde{u}(t))dt
\end{equation}
with initial data $u_0$. Similarly, if $\<\tilde{u}(\cdot,\tau,\omega,u_\tau),v\>_{H}$ is a $\gamma_2$-H\"{o}lder continuous function on $[\tau,T+\tau]$ and $\gamma_2+\gamma_1>1$,
then applying Lemma \ref{chain rule} to $\<\tilde{u}(t,\tau,\omega,u_\tau),v\>_{H}$, we have
\[
\<\tilde{u}(t),v\>_{H}=\<\tilde u_\tau,v\>_{H}+\int_{\tau}^{t}e^{-\beta\omega_r}{}_{V^{*}}\<A(r,z_r^{-1}(\omega)\tilde{u}(r)),v\>_{V}dr,\quad \forall v\in V,
\]
it is the variational  formulation of the following random partial differential equations
\begin{equation}\label{random PDE1}
	d\tilde{u}(t)=z_t(\omega)A(t,z_t^{-1}(\omega)\tilde{u}(t))dt
\end{equation}
with initial data $\tilde u_\tau=e^{-\beta\omega_\tau}u_\tau$.
Therefore, we can give the following definition of the random partial differential equations \eqref{random PDE} and \eqref{random PDE1}.
\begin{de}\label{random-t}
	A continuous $H$-valued process $\{\tilde{u}(t,\omega,u_0)\}$ with initial data $u_0$ at time $0$ is a variational  solution of \eqref{random PDE} if  for all $\omega\in
\Omega,T>0$,  $\tilde{u}(t,\omega,u_0)\in L^{\a}([0,T];V)$ and for any $v\in V, t\in[0,T]$, the following relation holds
	\[
	\<\tilde{u}(t,\o,u_0),v\>_{H}=\<u_0,v\>_{H}+\int_{0}^{t}e^{-\beta\omega_r}{}_{V^{*}}\<A(z_r^{-1}(\omega)\tilde{u}(r,\o,u_0)),v\>_{V}dr.
	\]
	In addition, the continuous $H$-valued process  $\{\tilde{u}(t,\tau,\omega,\tilde u_\tau)\}_{t\geq \tau}$ with initial data $\tilde u_\tau$ at time $\tau\in\mathbb{R}$ is a
variational  solution of \eqref{random PDE1} if  for all $\omega\in \Omega,T>0$, $\tilde{u}(t,\tau,\omega,\tilde u_\tau)\in L^{\a}([\tau,\tau+T];V)$ and for any $v\in V,
t\in[\tau,\tau+T]$, the following identity holds
	\[
	\<\tilde{u}(t,\tau,\o,u_0),v\>_{H}=\<\tilde u_\tau,v\>_{H}+\int_{0}^{t}e^{-\beta\omega_r}{}_{V^{*}}\<A(r,z_r^{-1}(\omega)\tilde{u}(r,\o,u_0)),v\>_{V}dr.
	\]
\end{de}
Based on the above transformation, if $u(t)$ is the variational solution of the system  \eqref{eqn:spde2}, then  $\tilde{u}(t)$ is the variational solution of \eqref{random PDE} and
vice versa.
\begin{lem}\label{equvilence}
	Let $\tilde{u}(t)=z_t(\omega)u(t)$, $u_0\in H$. Under the assumptions (A1)-(A4), the following statements are equivalent.
	\begin{description}
		\item[(i)] $\tilde{u}\in C([0,T];H)\cap L^{\alpha}([0,T];V)$ is the variational solution of \eqref{random PDE} for each fixed $\o\in\Omega$, and $\<\tilde{u},v\>\in
C^{\gamma_2}([0,T];\mathbb{R})$ for any $v\in V$ and $\omega\in \Omega$, where $\gamma_1+\gamma_2>1,\gamma_2\leq \frac{1}{\alpha}\wedge \gamma_1$.
		\item [(ii)] $u\in C([0,T];H)\cap L^{\alpha}([0,T];V)$ and $\<u,v\>_H\in C^{\gamma_2}([0,T];\mathbb{R})$ for $v\in V$ is the variational solution of \eqref{eqn:spde2} for
each fixed $\omega\in\Omega$, where $\gamma_1+\gamma_2>1$.
	\end{description}
\end{lem}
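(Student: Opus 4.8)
The plan is to obtain both implications from a single computation: for each fixed $v\in V$ and $\o\in\Omega$, apply the Young product rule (Lemma \ref{chain rule}) to the scalar path $t\mapsto z_t^{-1}(\o)\<\tilde u(t),v\>_H=\<u(t),v\>_H$, respectively to $t\mapsto z_t(\o)\<u(t),v\>_H=\<\tilde u(t),v\>_H$, using Lemma \ref{Trans noise} for the decompositions $z_t=1+\int_0^t(-\b z_r)\,d\o_r$ and $z_t^{-1}=1+\int_0^t\b z_r^{-1}\,d\o_r$, and then simplify with the pointwise identities $z_t^{-1}(\o)e^{-\b\o_t}=1$, $z_t(\o)=e^{-\b\o_t}$, $z_t^{-1}(\o)\tilde u(t)=u(t)$. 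The preliminary point, where the real work lies, is to verify that all hypotheses of Lemma \ref{chain rule} hold on the bounded interval $[0,T]$. Since $\o\in C^{\gamma_1}([0,T];\R)$, the paths $z,z^{-1}$ and the coefficients $\b z,\b z^{-1}$ lie in $C^{\gamma_1}([0,T];\R)$ and are bounded (and bounded away from $0$), while the constant $1$ lies in every H\"{o}lder class; the only nontrivial regularity to check is that of the Lebesgue drift. Using $(A4)$ together with $\tilde u\in C([0,T];H)\cap L^{\alpha}([0,T];V)$ (so that $\sup_{[0,T]}\|\tilde u(r)\|_H<\infty$, hence $1+\|z_r^{-1}\tilde u(r)\|_H^{\varpi}$ is bounded) and the boundedness of $z^{-1}$, one gets $r\mapsto A(z_r^{-1}\tilde u(r))\in L^{\alpha/(\alpha-1)}([0,T];V^*)$; by H\"{o}lder's inequality $t\mapsto\int_0^t e^{-\b\o_r}\,{}_{V^{*}}\<A(z_r^{-1}\tilde u(r)),v\>_V\,dr$ is then $\tfrac1\alpha$-H\"{o}lder, and since $\gamma_2\le\tfrac1\alpha\wedge\gamma_1$ it lies in $C^{\gamma_2}([0,T];\R)$; the same estimate gives $t\mapsto\int_0^t{}_{V^{*}}\<A(u(r)),v\>_V\,dr\in C^{1/\alpha}\subseteq C^{\gamma_2}$.

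For (i)$\Rightarrow$(ii): put $u(t):=z_t^{-1}(\o)\tilde u(t)$. Boundedness and positivity of $z^{-1}$ give $u\in C([0,T];H)\cap L^{\alpha}([0,T];V)$, and $\<u(\cdot),v\>_H=z^{-1}_{\cdot}\<\tilde u(\cdot),v\>_H$ is the product of a bounded $C^{\gamma_1}$ function with a bounded $C^{\gamma_2}$ function, hence in $C^{\gamma_2}$ (using $\gamma_2\le\gamma_1$). With $X_t:=z_t^{-1}(\o)$ (drift $\equiv1$, noise coefficient $\b z_t^{-1}$) and $Y_t:=\<\tilde u(t),v\>_H$ (drift $\Gamma_t:=\<u_0,v\>_H+\int_0^t e^{-\b\o_r}{}_{V^{*}}\<A(z_r^{-1}\tilde u(r)),v\>_V\,dr$, zero noise coefficient, from Definition \ref{random-t}), Lemma \ref{chain rule} gives
\[
d\<u(t),v\>_H = z_t^{-1}e^{-\b\o_t}\,{}_{V^{*}}\<A(z_t^{-1}\tilde u(t)),v\>_V\,dt + \b z_t^{-1}\<\tilde u(t),v\>_H\,d\o_t .
\]
Since $z_t^{-1}e^{-\b\o_t}=1$ and $z_t^{-1}\tilde u(t)=u(t)$, the right-hand side equals ${}_{V^{*}}\<A(u(t)),v\>_V\,dt+\b\<u(t),v\>_H\,d\o_t$; integrating over $[0,t]$ yields precisely the identity in Definition \ref{def:soln_pathw}, so $u$ is a variational solution of \eqref{eqn:spde2}.

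For (ii)$\Rightarrow$(i): put $\tilde u(t):=z_t(\o)u(t)$; as above $\tilde u\in C([0,T];H)\cap L^{\alpha}([0,T];V)$ and $\<\tilde u(\cdot),v\>_H=z_{\cdot}\<u(\cdot),v\>_H\in C^{\gamma_2}$. Taking now $X_t:=z_t(\o)$ (noise coefficient $-\b z_t$) and $Y_t:=\<u(t),v\>_H=\Gamma_t+\int_0^t\b\<u(r),v\>_H\,d\o_r$ with $\Gamma_t:=\<u_0,v\>_H+\int_0^t{}_{V^{*}}\<A(u(r)),v\>_V\,dr$ (from Definition \ref{def:soln_pathw}), Lemma \ref{chain rule} gives
\[
d\<\tilde u(t),v\>_H = z_t\,{}_{V^{*}}\<A(u(t)),v\>_V\,dt + \big(\b z_t\<u(t),v\>_H-\b z_t\<u(t),v\>_H\big)\,d\o_t = e^{-\b\o_t}\,{}_{V^{*}}\<A(z_t^{-1}\tilde u(t)),v\>_V\,dt ,
\]
the $d\o_t$-term cancelling identically and $z_t=e^{-\b\o_t}$, $u(t)=z_t^{-1}\tilde u(t)$ being used; integrating over $[0,t]$ gives the identity of Definition \ref{random-t}, so $\tilde u$ solves \eqref{random PDE}. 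The main obstacle is thus not the algebra, which is symmetric under the swap $z\leftrightarrow z^{-1}$, but the uniform verification on $[0,T]$ of the H\"{o}lder hypotheses of Lemma \ref{chain rule} — in particular that the Lebesgue drift is $\gamma_2$-H\"{o}lder via $(A4)$, and that the admissible window $\gamma_2\le\tfrac1\alpha\wedge\gamma_1$ is reconciled with $\gamma_1+\gamma_2>1$ for the exponents arising from the construction of the solution (reading off the structure of \eqref{eqn:spde2}, $\<u(\cdot),v\>_H$ is in fact automatically in $C^{1/\alpha\wedge\gamma_1}$, which is what one uses in (ii)$\Rightarrow$(i) to place $\gamma_2$ in the required range).
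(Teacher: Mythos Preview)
Your proof is correct and takes essentially the same approach as the paper: both rely on the Young product rule (Lemma \ref{chain rule}), Lemma \ref{Trans noise} for the decompositions of $z^{\pm1}$, and condition $(A4)$ together with H\"older's inequality to obtain the $1/\alpha$-H\"older regularity of the Lebesgue drift. The paper's presentation differs only in emphasis --- it writes out the H\"older estimate for $\langle u,v\rangle_H$ via the explicit add-and-subtract splitting $(z_t^{-1}-z_s^{-1})\langle\tilde u(t),v\rangle_H+z_s^{-1}(\langle\tilde u(t),v\rangle_H-\langle\tilde u(s),v\rangle_H)$, treats only (i)$\Rightarrow$(ii) in detail (deferring the product-rule computation itself to the discussion preceding the lemma), and dismisses the converse as ``directly from the previous analysis'', whereas you spell out both directions and make the $d\omega_t$-cancellation in (ii)$\Rightarrow$(i) explicit.
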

\begin{proof}
	We first assume that the statement \textbf{(i)} is correct. It is directly  from $u(t)=z_t^{-1}(\omega)\tilde{u}$ to know that $u\in C([0,T];H)\cap L^{\alpha}([0,T];V)$.
	
	Due to $\o\in C^{\gamma_1}([0,T],\mathbb{R}),\gamma_1\in(\frac{1}{2},H)$, for any $v\in V, \gamma_1+\gamma_2>1,\gamma_2\leq \frac{1}{\alpha}\wedge \gamma_1$, it suffices to check
that $\<u,v\>_{H}\in C^{\gamma_2}([0,T],\mathbb{R})$ . Indeed, for any $s,t\in[0,T],\o\in\Omega$, we have
	\begin{equation}\label{eqn:holder-e}
		\begin{aligned}
			&|\<u(t,\omega,u_0),v\>_{H}-\<u(s,\omega,u_0),v\>_{H}|\\
			&=|z_t^{-1}(\omega)\<\tilde{u}(t,\omega,u_0),v\>_{H}-z_s^{-1}(\omega)\<\tilde{u}(s,\omega,u_0),v\>_{H}|\\
			&\leq |(z_t^{-1}(\omega)-z_s^{-1}(\omega))\<\tilde{u}(t,\omega,u_0),v\>_{H}|\\
			&\quad+z_s^{-1}(\omega)|\<\tilde{u}(t,\omega,u_0),v\>_{H}-\<\tilde{u}(s,\omega,u_0),v\>_{H}|.
		\end{aligned}
	\end{equation}
	For the first term on the right hand of  \eqref{eqn:holder-e}, by the mean value theorem, we have
	\begin{equation}\label{eqn:holder-e-1}
		\begin{aligned}
			&|(z_t^{-1}(\omega)-z_s^{-1}(\omega))\<\tilde{u}(t,\omega,u_0),v\>_{H}|\\
			&\leq e^{|\beta|\sup_{t\in[0,T]}|\omega_t|}|\omega_t-\omega_{s}||\<\tilde{u}(t,\omega,u_0),v\>_{H}|\\
			&\leq C(\omega,T,\beta)|t-s|^{\gamma_1}\|v\|_{H}\sup_{t\in[0,T]}\|\tilde{u}(t,\o,u_0)\|_{H}\\
			&\leq C(\omega,T,\beta,\lambda)|t-s|^{\gamma_1}\|v\|_{v}\sup_{t\in[0,T]}\|\tilde{u}(t,\o,u_0)\|_{H}.
		\end{aligned}
	\end{equation}
	For the second term on the right hand of \eqref{eqn:holder-e}, according to Definition \ref{random-t} and assumption (A4), we have
	\begin{align}\label{eqn:holder-e-2}	
		&z_s^{-1}(\omega)|\<\tilde{u}(t,\omega,u_0),v\>_{H}-\<\tilde{u}(s,\omega,u_0),v\>_{H}|\nonumber\\
		&\leq C(\omega,T,\beta)\int_{s}^{t}e^{-\beta\omega_r}\|A(z_r^{-1}(\omega)\tilde{u}(r,\o,u_0))\|_{V^{*}}\|v\|_{V}dr\nonumber\\
		&\leq C(\omega,\beta,T,\alpha)\int_{s}^{t}(1+\|\tilde{u}(r,\o,u_0))\|_{V}^{\alpha-1})(1+\|\tilde{u}(r,\o,u_0))\|^{\frac{\varpi(\alpha-1)}{\alpha}}_{H})dr\nonumber\\
		&\leq C(\omega,\beta,T,\alpha,\varpi)\left(\int_{s}^{t}(1+\|\tilde{u}(r,\omega,u_0)\|_{V}^\alpha)dr\right)^{\frac{\alpha-1}{\alpha}}(t-s)^{\frac{1}{\alpha}}\\
		&\quad\quad\times(1+\sup_{t\in[0,T]}\|\tilde{u}(t,\o,u_0)\|^{\frac{\varpi(\alpha-1)}{\alpha}}_{H})\nonumber\\
		&\leq C(\omega,\beta,T,\alpha,\varpi)(1+\|\tilde{u}(r,\omega,u_0)\|_{L^{\alpha}([0,T];V)})^{\alpha-1}\nonumber\\
		&\quad\quad\times(1+\sup_{t\in[0,T]}\|\tilde{u}(t,\o,u_0)\|^{\frac{\varpi(\alpha-1)}{\alpha}}_{H})(t-s)^{\frac{1}{\alpha}}.\nonumber
	\end{align}
	Combining with \eqref{eqn:holder-e}-\eqref{eqn:holder-e-2}, we know that $\<u(\cdot,\omega,u_0),v\>_{H}$ has $\gamma_2$-H\"{o}lder regularity for any fixed $v\in
V,\omega\in\Omega$.
	
	%Next,  we need to prove that $(\<u(\cdot,\omega,u_0),v\>_{H},\beta^\prime\<u(\cdot,\omega,u_0),v\>_{H})\in\mathcal{D}_{\o}^{\gamma }([0,T];\mathbb{R})$ for rough case, and
$\bm\o\in \mathcal{C}^{\gamma}([0,T];\mathbb{R}),\gamma\in(\frac{1}{3},H)\cap(\frac{1}{3},\frac{1}{\alpha}]$. Similar to the Young case, it is clearly that
$\<u(\cdot,\omega,u_0),v\>_{H}\in C^{\gamma}([0,T];\mathbb{R})$. In addition, the remainder term $R^{\<u(\cdot,\omega,u_0),v\>_{H}}$ is given by
	%\begin{equation}
	%  \begin{aligned}
		%  R^{\<u(\cdot,\omega,u_0),v\>_{H}}_{s,t}=e^{\beta\o_t}\tilde{u}(t,\o,u_0)-e^{\beta\o_s}\tilde{u}(s,\o,u_0)
		%  \end{aligned}
	%\end{equation}
	The second equivalence is directly from the previous analysis.
\end{proof}
For the nonautonomous case, we have the same result.
\begin{lem}\label{equvilence1}
	Let $\tilde{u}(t)=z_t(\omega)u(t)$, $u_\tau\in H,\tau\in\mathbb{R}$. Under the assumptions $(A1^\prime)$-$(A4^\prime)$, the following statements are equivalent.
	\begin{description}
		\item[(i)] $\tilde{u}\in C([\tau,\tau+T];H)\cap L^{\alpha}([\tau,\tau+T];V)$ is the variational solution of \eqref{random PDE1} for each fixed $\o\in\Omega$, and
$\<\tilde{u},v\>\in C^{\gamma_2}([\tau,\tau+T];\mathbb{R})$ for any $v\in V$ and $\omega\in \Omega$, where $\gamma_1+\gamma_2>1,\gamma_2\leq \frac{1}{\alpha}\wedge \gamma_1$.
		\item [(ii)] $u\in C([\tau,\tau+T];H)\cap L^{\alpha}([\tau,\tau+T];V)$ and $\<u,v\>_H\in C^{\gamma_2}([\tau,\tau+T];\mathbb{R})$ for $v\in V$ is the variational solution of
\eqref{eqn:spde3} for each fixed $\omega\in\Omega$, where $\gamma_1+\gamma_2>1$.
	\end{description}
\end{lem}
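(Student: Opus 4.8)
The plan is to follow the proof of Lemma~\ref{equvilence} essentially line by line, the only structural change being that the constants $C,K$ of $(A1)$--$(A4)$ are replaced by the $L^{1}([\tau,\tau+T];\mathbb{R})$ functions $f,g$ of $(A1')$--$(A4')$ and that $[0,T]$ is replaced by $[\tau,\tau+T]$. Recall from Lemma~\ref{Trans noise} that $z_{t}(\omega)=e^{-\beta\omega_{t}}$ and $z_{t}^{-1}(\omega)=e^{\beta\omega_{t}}$ solve $dz_{t}=-\beta z_{t}\,d\omega_{t}$ and $dz_{t}^{-1}=\beta z_{t}^{-1}\,d\omega_{t}$, and that both are $\gamma_{1}$-H\"older on $[\tau,\tau+T]$ since $\omega\in C^{\gamma_{1}}$ and the exponential function is locally Lipschitz. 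For the implication (i)$\Rightarrow$(ii): from $u(t)=z_{t}^{-1}(\omega)\tilde u(t)$ one reads off $u\in C([\tau,\tau+T];H)\cap L^{\alpha}([\tau,\tau+T];V)$, and to verify $\langle u(\cdot),v\rangle_{H}\in C^{\gamma_{2}}([\tau,\tau+T];\mathbb{R})$ I would split the increment as in \eqref{eqn:holder-e},
\[
\langle u(t),v\rangle_{H}-\langle u(s),v\rangle_{H}=\big(z_{t}^{-1}(\omega)-z_{s}^{-1}(\omega)\big)\langle\tilde u(t),v\rangle_{H}+z_{s}^{-1}(\omega)\big(\langle\tilde u(t),v\rangle_{H}-\langle\tilde u(s),v\rangle_{H}\big).
\]

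The first term is handled as in \eqref{eqn:holder-e-1} by the mean value theorem, the $\gamma_{1}$-H\"older bound on $\omega$ and $\sup_{[\tau,\tau+T]}\|\tilde u\|_{H}<\infty$, yielding an $O(|t-s|^{\gamma_{1}})$ bound. For the second term I would insert the variational identity of Definition~\ref{random-t} and estimate the drift by the growth condition $(A4')$; the only feature absent from Lemma~\ref{equvilence} is the function $f$. Writing $\|A(r,w)\|_{V^{*}}\le C\big(f(r)+\|w\|_{V}^{\alpha}\big)^{\frac{\alpha-1}{\alpha}}\big(1+\|w\|_{H}^{\varpi}\big)^{\frac{\alpha-1}{\alpha}}$, absorbing the $H$-norm factor into $\sup_{[\tau,\tau+T]}\|\tilde u\|_{H}$, and using H\"older's inequality with exponents $\frac{\alpha}{\alpha-1}$ and $\alpha$ gives
\[
\int_{s}^{t}\big\|A(r,z_{r}^{-1}(\omega)\tilde u(r))\big\|_{V^{*}}\,dr\le C\Big(\int_{\tau}^{\tau+T}\big(f(r)+\|\tilde u(r)\|_{V}^{\alpha}\big)\,dr\Big)^{\frac{\alpha-1}{\alpha}}|t-s|^{\frac1\alpha},
\]
with the inner integral finite because $f\in L^{1}([\tau,\tau+T];\mathbb{R})$ and $\tilde u\in L^{\alpha}([\tau,\tau+T];V)$. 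Since $\gamma_{2}\le\frac1\alpha\wedge\gamma_{1}$ and the interval is bounded, both contributions are $O(|t-s|^{\gamma_{2}})$, so $\langle u(\cdot),v\rangle_{H}\in C^{\gamma_{2}}$.

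With this regularity at hand, the variational identity for \eqref{eqn:spde3} follows by applying the Young product rule, Lemma~\ref{chain rule}, to $\langle u(t),v\rangle_{H}=z_{t}^{-1}(\omega)\langle\tilde u(t),v\rangle_{H}$: here $z^{-1}$ has noise coefficient $\beta z^{-1}$ and constant drift, while by Definition~\ref{random-t} the real path $\langle\tilde u(\cdot),v\rangle_{H}$ is a pure drift with integrand $e^{-\beta\omega_{r}}{}_{V^{*}}\langle A(r,z_{r}^{-1}(\omega)\tilde u(r)),v\rangle_{V}$, and Lemma~\ref{chain rule} applies because $\gamma_{1}+\gamma_{2}>1$ and $\langle\tilde u(\cdot),v\rangle_{H}\in C^{\gamma_{2}}$ by (i). It produces $d\langle u(t),v\rangle_{H}=z_{t}^{-1}(\omega)e^{-\beta\omega_{t}}{}_{V^{*}}\langle A(t,z_{t}^{-1}(\omega)\tilde u(t)),v\rangle_{V}\,dt+\beta z_{t}^{-1}(\omega)\langle\tilde u(t),v\rangle_{H}\,d\omega_{t}$, and since $z_{t}^{-1}(\omega)e^{-\beta\omega_{t}}=1$, $z_{t}^{-1}(\omega)\tilde u(t)=u(t)$, $z_{t}^{-1}(\omega)\langle\tilde u(t),v\rangle_{H}=\langle u(t),v\rangle_{H}$, and $z_{\tau}^{-1}(\omega)\tilde u_{\tau}=u_{\tau}$ for the initial datum, this is exactly the variational solution of \eqref{eqn:spde3}.

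The converse (ii)$\Rightarrow$(i) is symmetric. From $\tilde u(t)=z_{t}(\omega)u(t)$ one gets membership in the solution spaces; the $\gamma_{2}$-H\"older continuity of $\langle\tilde u(\cdot),v\rangle_{H}$ follows from that of $\langle u(\cdot),v\rangle_{H}$ (assumed in (ii)) and of $z_{\cdot}$, once one observes that $r\mapsto\int_{\tau}^{r}\beta\langle u(\sigma),v\rangle_{H}\,d\omega_{\sigma}$ is $\gamma_{1}$-H\"older by Theorem~\ref{Young est} and $r\mapsto\int_{\tau}^{r}{}_{V^{*}}\langle A(\sigma,u(\sigma)),v\rangle_{V}\,d\sigma$ is $\frac1\alpha$-H\"older by $(A4')$ and the same H\"older-inequality estimate; then applying Lemma~\ref{chain rule} to $\tilde u(t)=z_{t}(\omega)u(t)$ makes the two $d\omega_{t}$-contributions cancel, leaving $d\langle\tilde u(t),v\rangle_{H}=z_{t}(\omega){}_{V^{*}}\langle A(t,u(t)),v\rangle_{V}\,dt=e^{-\beta\omega_{t}}{}_{V^{*}}\langle A(t,z_{t}^{-1}(\omega)\tilde u(t)),v\rangle_{V}\,dt$, i.e.\ the variational formulation of \eqref{random PDE1}. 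The only step that is not pure bookkeeping is this drift estimate with the merely integrable $f$: one must keep $f(r)$ and $\|\tilde u(r)\|_{V}^{\alpha}$ under a single power $\frac{\alpha-1}{\alpha}$ and invoke H\"older's inequality over $[s,t]$, which is what yields the $|t-s|^{1/\alpha}$ factor needed to reach the H\"older exponent $\gamma_{2}$; everything else transcribes the proof of Lemma~\ref{equvilence}.
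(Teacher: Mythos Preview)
Your proposal is correct and follows exactly the approach indicated by the paper, which simply states that the proof is the same as Lemma~\ref{equvilence} and omits it. You have carried out that transcription carefully, the only nontrivial adaptation being the drift estimate via $(A4')$ where the integrable function $f$ replaces the constant, and your use of H\"older's inequality to retain the $|t-s|^{1/\alpha}$ factor is precisely what is needed.
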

The proof this lemma is same as Lemma \ref{equvilence}, so we omit its proof.
\section{Existence and uniqueness of the stochastic PDEs}\label{well-posedness}
In order to obtain the global solution of random PDEs \eqref{random PDE} and \eqref{random PDE1}, we need to introduce the locally monotone framework for deterministic PDEs(see Refs.
\cite{MR2833734,MR2990049,MR3410409}).
Let $V\subseteq H\subseteq V^{*}$ be a Gelfand triple as before. Consider the following general nonlinear PDEs
\begin{align}
	u'(t) & =A(t,u(t)),\quad  0<t<T,\label{eq:app_e_e}\\
	u(0) & =u_{0}\in H,\nonumber
\end{align}
where $T>0$, $u'$ is the generalized derivative of $u$ on $[0,T]$ and $A:[0,T]\times V\rightarrow V^{*}$ is measurable, $i.e.$ for each
$u\in L^{1}([0,T];V)$, the mapping $t\mapsto A(\cdot,u(\cdot))$ is $V^{*}$-measurable on $[0,T]$.

Assume that for some $\alpha>1,\varpi\ge0$ there exist constants $c,C \ge 0$ and functions $f,g\in L^{1}([0,T];\mathbb{R})$ such that the following
conditions hold for all $t\in[0,T]$ and $v,v_{1},v_{2}\in V$:
\begin{enumerate}
	\item [$(H1)$] (Hemicontinuity) The map $s\mapsto{}_{V^{*}}\<A(t,v_{1}+sv_{2}),v\>_{V}$ is continuous on $\mathbb{R}$.
	\item [$(H2)$] (Local monotonicity)
	\[
	2{}_{V^{*}}\<A(t,v_{1})-A(t,v_{2}),v_{1}-v_{2}\>_{V}\le\left(f(t)+\eta(t,v_{1})+\rho(t,v_{2})\right)\|v_{1}-v_{2}\|_{H}^{2},
	\]
	where $\eta,\rho:[0,T]\times V\rightarrow[0,+\infty)$ are measurable and locally bounded functions for each fixed $t\in[0,T]$.
	\item [$(H3)$] (Coercivity)
	\[
	2{}_{V^{*}}\<A(t,v),v\>_{V}\le-c\|v\|_{V}^{\alpha}+g(t)\|v\|_{H}^{2}+f(t)
	\]
	\item [$(H4)$] (Growth)
	\[
	\|A(t,v)\|_{V^{*}}^{\frac{\alpha}{\alpha-1}}\le C\bigg(f(t)+\|v\|_{V}^{\alpha}\bigg)\bigg(1+\|v\|_{H}^{\varpi}\bigg).
	\]
\end{enumerate}
\begin{thm}
	\label{thm:var_ex} Suppose that $V\subseteq H$ is compact and $(H1)$--$(H4)$ hold. Then for any $u_{0}\in H$, \eqref{eq:app_e_e} has a solution $u$ on
	$[0,T]$, i.e.
	\[
	u\in L^{\alpha}([0,T];V)\cap C([0,T];H),\ u'\in L^{\frac{\alpha}{\alpha-1}}([0,T];V^{*})
	\]
	and
	\[
	\<u(t),v\>_{H}=\<u_{0},v\>_{H}+\int_{0}^{t}{}_{V^{*}}\<A(s,u(s)),v\>_{V}ds,\quad\forall t\in[0,T],v\in V.
	\]
	Moreover, if there exist non-negative constants $C$, $\vartheta$ such that
	\begin{equation}
		\eta(t,v)+\rho(t,v)\le C(1+\|v\|_{V}^{\alpha})(1+\|v\|_{H}^{\vartheta}),\ v\in V,\label{c3}
	\end{equation}
	then the solution of \eqref{eq:app_e_e} is unique.
\end{thm}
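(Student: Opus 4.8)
The plan is to prove existence by the Galerkin method of the locally monotone variational theory (cf.\ \cite{MR2833734,MR2990049,MR3410409}), and to obtain uniqueness from a Gronwall estimate once \eqref{c3} is assumed. Fix an orthonormal basis $\{e_k\}_{k\ge1}$ of $H$ with each $e_k\in V$ and $\mathrm{span}\{e_k:k\ge1\}$ dense in $V$; set $H_n=\mathrm{span}\{e_1,\dots,e_n\}$ and let $P_n$ denote the $H$-orthogonal projection onto $H_n$. First I would solve, for each $n$, the finite-dimensional system
\[
\tfrac{d}{dt}\langle u_n(t),e_k\rangle_H={}_{V^*}\langle A(t,u_n(t)),e_k\rangle_V,\quad k=1,\dots,n,\qquad u_n(0)=P_nu_0 .
\]
Hemicontinuity $(H1)$ and local monotonicity $(H2)$, together with the local boundedness of $\eta,\rho$, give via Minty's lemma on $H_n$ that $v\mapsto P_nA(t,v)$ is continuous on $H_n$ for a.e.\ $t$, while $(H4)$ supplies the Carath\'eodory bound in $t$; hence a local solution $u_n$ exists.

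Next I would derive uniform a priori bounds. Testing the $n$-th equation against $u_n(t)$ (an ODE identity on $H_n$) and using coercivity $(H3)$ yields $\tfrac{d}{dt}\|u_n(t)\|_H^2\le-c\|u_n(t)\|_V^{\alpha}+g(t)\|u_n(t)\|_H^2+f(t)$, so Gronwall's inequality (with $f,g\in L^1$ and $\|u_n(0)\|_H\le\|u_0\|_H$) bounds $\sup_{[0,T]}\|u_n\|_H^2$ and $\int_0^T\|u_n\|_V^{\alpha}\,dt$ uniformly in $n$; in particular each $u_n$ is global on $[0,T]$, and then $(H4)$ makes $\{A(\cdot,u_n(\cdot))\}_n$, hence $\{u_n'\}_n$, bounded in $L^{\alpha/(\alpha-1)}([0,T];V^*)$. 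Along a subsequence one then has $u_n\rightharpoonup u$ in $L^{\alpha}([0,T];V)$ and weak-$*$ in $L^\infty([0,T];H)$, $A(\cdot,u_n(\cdot))\rightharpoonup\chi$ in $L^{\alpha/(\alpha-1)}([0,T];V^*)$, and $u_n(T)\rightharpoonup u(T)$ in $H$; passing to the limit in the Galerkin identity gives $u'=\chi$, whence $u\in C([0,T];H)$ and $u(0)=u_0$. Since $V\hookrightarrow H$ is compact, the Aubin--Lions--Simon lemma yields $u_n\to u$ strongly in $L^2([0,T];H)$ and, along a further subsequence, $u_n(t)\to u(t)$ in $H$ for a.e.\ $t$.

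The step I expect to be the main obstacle is the identification $\chi=A(\cdot,u)$, which I would carry out by the monotonicity (Minty--Browder) trick adapted to local monotonicity. For a test function $\phi\in L^{\alpha}([0,T];V)\cap L^\infty([0,T];H)$ with $\phi'\in L^{\alpha/(\alpha-1)}([0,T];V^*)$, introduce the weight $r_n(t)=\exp\!\big(-\!\int_0^t(f(s)+\eta(s,u_n(s))+\rho(s,\phi(s)))\,ds\big)$, compute $\tfrac{d}{dt}\big(r_n(t)\|u_n(t)-\phi(t)\|_H^2\big)$ from the Galerkin equation, and apply $(H2)$ with $v_1=u_n(t)$, $v_2=\phi(t)$ to cancel the defect term; integrating over $[0,T]$, letting $n\to\infty$ (using the strong $L^2([0,T];H)$-convergence, weak lower semicontinuity of $\|u_n(T)\|_H^2$, and $P_nu_0\to u_0$), and comparing with the analogous energy identity for the limit $u$ (which solves $u'=\chi$, with $r$ the weight obtained on replacing $u_n$ by $u$) leads to $\int_0^T r(t)\,{}_{V^*}\langle\chi(t)-A(t,\phi(t)),u(t)-\phi(t)\rangle_V\,dt\le0$. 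Choosing $\phi=u-\lambda w$ with $\lambda>0$ and $w\in L^{\alpha}([0,T];V)\cap L^\infty([0,T];H)$, $w'\in L^{\alpha/(\alpha-1)}([0,T];V^*)$, dividing by $\lambda$ and letting $\lambda\downarrow0$ with the help of hemicontinuity $(H1)$, one obtains $\chi=A(\cdot,u)$ a.e.; thus $u$ is the desired solution with $u\in L^{\alpha}([0,T];V)\cap C([0,T];H)$ and $u'\in L^{\alpha/(\alpha-1)}([0,T];V^*)$. The genuinely delicate point is the passage to the limit in the $n$-dependent weights $r_n$ and in the defect integrals, and this is precisely where both the compactness of $V\hookrightarrow H$ and the growth/local-boundedness of $\eta,\rho$ along $\{u_n\}$ are used.

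For uniqueness, assume \eqref{c3} and let $u,\bar u$ be two solutions with $u(0)=\bar u(0)=u_0$; put $w=u-\bar u\in C([0,T];H)\cap L^{\alpha}([0,T];V)$, so $w(0)=0$. The chain rule for the Gelfand triple gives $\|w(t)\|_H^2=2\int_0^t{}_{V^*}\langle A(s,u(s))-A(s,\bar u(s)),w(s)\rangle_V\,ds$, and $(H2)$ bounds the integrand by $\big(f(s)+\eta(s,u(s))+\rho(s,\bar u(s))\big)\|w(s)\|_H^2$. By \eqref{c3} together with $u,\bar u\in L^{\alpha}([0,T];V)\cap L^\infty([0,T];H)$, the map $s\mapsto f(s)+\eta(s,u(s))+\rho(s,\bar u(s))$ belongs to $L^1([0,T];\R)$, so Gronwall's inequality forces $w\equiv0$, which proves uniqueness.
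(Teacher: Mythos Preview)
Your proposal is correct and follows the same route the paper takes: the paper does not give a detailed proof of this theorem but simply states that it ``is same as Theorem 1.1 in \cite{MR2833734}, thus we omit its proof,'' with a subsequent remark that the time-dependence of $\eta,\rho$ in $(H2)$ does not affect the argument. Your Galerkin + a priori bounds + Aubin--Lions compactness + weighted Minty--Browder identification + Gronwall uniqueness is exactly the locally monotone variational scheme of \cite{MR2833734,MR2990049,MR3410409} that the paper invokes. One minor point of emphasis: the paper's remark singles out the pseudo-monotonicity formulation (Lemma~2.2 in \cite{MR2833734}) as the place where $(H2)$ is used at fixed $t$, whereas you phrase the limit identification via the exponential weight $r_n$; these are two equivalent packagings of the same compactness-plus-local-monotonicity argument, and both rely on the compact embedding $V\subseteq H$ in the way you describe.
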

The proof of Theorem \ref{thm:var_ex} is same as Theorem 1.1 in  \cite{MR2833734}, thus we omit its proof.
\begin{thm}\label{T2}
	Assume that $V \subseteq H$ is compact and $(H1)$-$(H4)$ hold, $u_i$ are the solution of \eqref{eq:app_e_e}
	with $u_{0}^i\in H$, $i=1,2$, and
	satisfy
	$$ \int_0^T\left( \rho(s,u_1(s))+\eta(s,u_2(s)) \right)  d s<\infty.  $$
	Then
	\begin{equation}
		\begin{split}
			\|u_1(t)-u_2(t)\|_H^2
			\le &   \exp\left[\int_0^t \left(f(t)+\rho(s,u_1(s))+\eta(s,u_2(s)) \right) d s  \right]\\
			& \ \  \cdot \left( \|u_{1,0}-u_{2,0}\|_H^2 \right), \ t\in[0, T].
		\end{split}
	\end{equation}
\end{thm}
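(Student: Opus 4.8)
The plan is to derive a differential inequality for $t\mapsto\|u_1(t)-u_2(t)\|_H^2$ from the local monotonicity condition $(H2)$ and then close the estimate with Gronwall's lemma. Set $w(t):=u_1(t)-u_2(t)$. Since $u_1,u_2$ are solutions of \eqref{eq:app_e_e} in the sense of Theorem \ref{thm:var_ex}, we have $u_i\in L^{\alpha}([0,T];V)\cap C([0,T];H)$, and the growth bound $(H4)$ together with $\sup_{t\in[0,T]}\|u_i(t)\|_H<\infty$ gives $\int_0^T\|A(s,u_i(s))\|_{V^*}^{\frac{\alpha}{\alpha-1}}\,ds\le C(1+\sup_t\|u_i(t)\|_H^{\varpi})(\|f\|_{L^1}+\|u_i\|_{L^{\alpha}([0,T];V)}^{\alpha})<\infty$, so $s\mapsto A(s,u_i(s))\in L^{\frac{\alpha}{\alpha-1}}([0,T];V^*)$. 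Hence $w\in L^{\alpha}([0,T];V)\cap C([0,T];H)$ and $w'=A(\cdot,u_1(\cdot))-A(\cdot,u_2(\cdot))\in L^{\frac{\alpha}{\alpha-1}}([0,T];V^*)$.

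Next I would invoke the classical chain rule for the Gelfand triple $V\subseteq H\subseteq V^*$ (the same device underlying the proof of Theorem \ref{thm:var_ex}; see the references cited before it): for $w$ with the regularity just established, the map $t\mapsto\|w(t)\|_H^2$ is absolutely continuous on $[0,T]$ and
\[
\frac{d}{dt}\|w(t)\|_H^2 = 2\,{}_{V^*}\<A(t,u_1(t))-A(t,u_2(t)),\,u_1(t)-u_2(t)\>_V \quad\text{for a.e.\ } t\in[0,T].
\]
Now apply $(H2)$ with the roles of $v_1$ and $v_2$ played by $u_2(t)$ and $u_1(t)$; the left-hand side of $(H2)$ is invariant under $v_1\leftrightarrow v_2$ since ${}_{V^*}\<A(t,v_1)-A(t,v_2),v_1-v_2\>_V$ does not change, so we obtain, for a.e.\ $t\in[0,T]$,
\[
\frac{d}{dt}\|w(t)\|_H^2 \le \big(f(t)+\rho(t,u_1(t))+\eta(t,u_2(t))\big)\,\|w(t)\|_H^2 =: \psi(t)\,\|w(t)\|_H^2 .
\]

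It then remains to observe that $\psi\in L^1([0,T];\mathbb{R})$: indeed $f\in L^1([0,T];\mathbb{R})$ by hypothesis, while the assumption $\int_0^T\big(\rho(s,u_1(s))+\eta(s,u_2(s))\big)\,ds<\infty$ controls the other two terms. Integrating the inequality from $0$ to $t$ and applying the integral form of Gronwall's lemma gives
\[
\|u_1(t)-u_2(t)\|_H^2 \le \exp\left(\int_0^t\big(f(s)+\rho(s,u_1(s))+\eta(s,u_2(s))\big)\,ds\right)\|u_{1,0}-u_{2,0}\|_H^2,\quad t\in[0,T],
\]
which is the asserted bound; specializing to $u_{1,0}=u_{2,0}$ recovers uniqueness of \eqref{eq:app_e_e} under this integrability condition, which is weaker than \eqref{c3}. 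I expect the only genuinely delicate point to be the justification of the chain rule, namely the absolute continuity of $t\mapsto\|w(t)\|_H^2$ and the formula for its derivative; everything else is routine once $w\in L^{\alpha}([0,T];V)$ and $w'\in L^{\frac{\alpha}{\alpha-1}}([0,T];V^*)$ are in hand, and this chain rule is exactly the ingredient already used in proving Theorem \ref{thm:var_ex}.
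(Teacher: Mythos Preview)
Your proof is correct and follows exactly the route the paper indicates: apply $(H2)$ to $u_1-u_2$ (using the symmetry of the left side under $v_1\leftrightarrow v_2$ to land on $\rho(\cdot,u_1)+\eta(\cdot,u_2)$), obtain a differential inequality for $\|u_1(t)-u_2(t)\|_H^2$ via the Gelfand-triple chain rule, and close with Gr\"onwall's inequality. The paper's own proof is a one-line pointer to equation \eqref{eq:app_e_e}, $(H2)$, and Gr\"onwall (referring to \cite{MR2833734}), so you have simply supplied the details of precisely that argument.
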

Its proof  can be completed  by equation \eqref{eq:app_e_e}, assumption $(H2)$ and Gr\"{o}nwall's inequality(or see Theorem 1.2 in \cite{MR2833734}).
\begin{rem}
	In  this paper, we slightly modify the  condition $(H2)$ in the locally monotone framework for Refs. \cite{MR2833734,MR2990049,MR3410409}. Indeed, the functions $\rho$ and $\eta$
are independent of  the time variable in Refs. \cite{MR2833734,MR2990049,MR3410409}. However, due to the transformation in Section \ref{sec:setup}, we have to consider the functions
$\rho(t,\cdot)$ and $\eta(t,\cdot)$. Although  the functions $\rho$ and $\eta$ depend on the time variable in this paper, the proof of the existence and uniqueness of the solution is
same as Refs. \cite{MR2833734,MR2990049,MR3410409}. In fact,  when we use the condition $(H2)$  to check that the operator $A$ is a pseudo-monotone operator, we usually  fix the time
variable(see Ref. \cite[Lemma 2.2]{MR2833734}), so the time dependence of the function $\rho$ and $\eta$ do not change the method which we prove the well-posedness of the  equation
\eqref{eq:app_e_e} as in Ref. \cite{MR2833734}.
\end{rem}
\begin{rem}
	Recently, R{\"o}ckner et al in Ref. \cite{rockner2022well} gave the  fully local monotone framework, the assumptions for operators $A$ in this paper contain  fully local monotone
coefficient, but our diffusion term is a simple form(linear case). For the nonlinear multiplicative fBm  in the fully local monotone framework, it is a challenging question.
\end{rem}
\begin{thm}\label{wellposed-rpde}
	Assume that $V\subseteq H$ is compact, under the assumptions $(A1)$-$(A4)$ and $u_0\in H$, for each $\omega\in\Omega$, random PDE \eqref{random PDE} has a variational solution
$\tilde{u}(\cdot,\omega,u_0)\in C([0,T],H)\cap L^{\alpha}([0,T],V)$  and
	if there exist non-negative constants $C$, $\vartheta$ such that
	\begin{equation}
		\eta(v)+\rho(v)\le C(1+\|v\|_{V}^{\alpha})(1+\|v\|_{H}^{\vartheta}),\ v\in V,\label{c3*},
	\end{equation}	
	then the solution of \eqref{random PDE} is unique.
\end{thm}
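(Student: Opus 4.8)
The plan is to reduce the random PDE \eqref{random PDE} to the deterministic framework of Theorem \ref{thm:var_ex} by freezing $\omega\in\Omega$ and verifying that the operator
\[
\widehat A(t,v):=z_t(\omega)\,A\bigl(z_t^{-1}(\omega)v\bigr)=e^{-\beta\omega_t}A\bigl(e^{\beta\omega_t}v\bigr)
\]
satisfies the hypotheses $(H1)$--$(H4)$ on every interval $[0,T]$, with the pathwise-continuous coefficient $\omega$ absorbed into the $L^1$-in-time functions $f,g$ and into the locally bounded functions $\eta,\rho$. Once that is done, Theorem \ref{thm:var_ex} produces a solution $\tilde u(\cdot,\omega,u_0)\in C([0,T];H)\cap L^{\alpha}([0,T];V)$ with $\tilde u'\in L^{\alpha/(\alpha-1)}([0,T];V^*)$ of the integral equation defining a variational solution of \eqref{random PDE} in the sense of Definition \ref{random-t}; uniqueness follows from Theorem \ref{thm:var_ex} (equivalently Theorem \ref{T2}) once \eqref{c3*} is translated through the transformation into the time-dependent bound \eqref{c3}.

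The verification of $(H1)$--$(H4)$ proceeds condition by condition. For $(H1)$, hemicontinuity of $\widehat A(t,\cdot)$ is immediate from $(A1)$ since multiplying by the fixed scalars $z_t^{\pm1}(\omega)$ does not affect continuity in $s$. For $(H3)$, I would compute
\[
2\,{}_{V^*}\!\<\widehat A(t,v),v\>_V
= 2z_t(\omega)z_t^{-1}(\omega)\,{}_{V^*}\!\<A(z_t^{-1}v),z_t^{-1}v\>_V
\le -\gamma\|z_t^{-1}v\|_V^{\alpha}+K\|z_t^{-1}v\|_H^2+C,
\]
and then use $\|z_t^{-1}v\|_V^{\alpha}=e^{\alpha\beta\omega_t}\|v\|_V^{\alpha}\ge e^{-\alpha|\beta|\sup_{[0,T]}|\omega_t|}\|v\|_V^{\alpha}$ together with $\|z_t^{-1}v\|_H^2\le e^{2|\beta|\sup_{[0,T]}|\omega_t|}\|v\|_H^2$ to obtain the coercivity bound with $c:=\gamma e^{-\alpha|\beta|\sup_{[0,T]}|\omega_t|}>0$, $g(t):=Ke^{2|\beta|\sup_{[0,T]}|\omega_t|}$ and $f(t):=C$; both are bounded (hence $L^1$) on $[0,T]$ because $\omega\in C^{\gamma_1}([0,T];\mathbb{R})$. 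For $(H4)$, by $(A4)$,
\[
\|\widehat A(t,v)\|_{V^*}^{\alpha/(\alpha-1)}=e^{-\beta\omega_t\cdot\frac{\alpha}{\alpha-1}}\|A(z_t^{-1}v)\|_{V^*}^{\alpha/(\alpha-1)}
\le C e^{|\beta|\sup|\omega_t|\cdot\frac{\alpha}{\alpha-1}}\bigl(1+e^{\alpha|\beta|\sup|\omega_t|}\|v\|_V^{\alpha}\bigr)\bigl(1+e^{\varpi|\beta|\sup|\omega_t|}\|v\|_H^{\varpi}\bigr),
\]
which has exactly the form $C_\omega(f(t)+\|v\|_V^\alpha)(1+\|v\|_H^\varpi)$ after enlarging constants. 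For $(H2)$, I apply $(A2)$ to $v_1'=z_t^{-1}v_1$, $v_2'=z_t^{-1}v_2$ and note $v_1'-v_2'=z_t^{-1}(v_1-v_2)$, so the local monotonicity constant becomes $C+\eta(z_t^{-1}v_1)+\rho(z_t^{-1}v_2)$ multiplied by $e^{2|\beta|\sup|\omega_t|}\|v_1-v_2\|_H^2$; setting $\widehat\eta(t,v):=e^{2|\beta|\sup|\omega_t|}\eta(z_t^{-1}(\omega)v)$ and similarly $\widehat\rho$, these are measurable in $t$ and locally bounded in $v$ for each fixed $t$ because $\eta,\rho$ are locally bounded and $z_t^{-1}(\omega)$ maps bounded sets of $V$ to bounded sets. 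Finally, \eqref{c3*} transforms into \eqref{c3} for $\widehat\eta+\widehat\rho$ after the same bookkeeping of exponential factors, so Theorem \ref{thm:var_ex} gives uniqueness.

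The only genuinely nontrivial point — and what I would spend the most care on — is matching the regularity demanded by the equivalence Lemma \ref{equvilence}: Theorem \ref{thm:var_ex} only delivers $\tilde u'\in L^{\alpha/(\alpha-1)}([0,T];V^*)$, whereas to later pass back to \eqref{eqn:spde2} via Lemma \ref{equvilence} one needs $\<\tilde u,v\>_H\in C^{\gamma_2}([0,T];\mathbb{R})$ with $\gamma_2\le\frac1\alpha\wedge\gamma_1$. This Hölder bound is obtained exactly as in the estimate \eqref{eqn:holder-e-2}: for $0\le s\le t\le T$,
\[
|\<\tilde u(t),v\>_H-\<\tilde u(s),v\>_H|\le\int_s^t e^{-\beta\omega_r}\|A(z_r^{-1}\tilde u(r))\|_{V^*}\|v\|_V\,dr
\le C_\omega\Bigl(\int_s^t(1+\|\tilde u(r)\|_V^\alpha)\,dr\Bigr)^{\frac{\alpha-1}{\alpha}}(t-s)^{\frac1\alpha}\bigl(1+\sup_{[0,T]}\|\tilde u\|_H^{\varpi(\alpha-1)/\alpha}\bigr),
\]
using $(A4)$, Hölder's inequality in time, and $\tilde u\in L^\alpha([0,T];V)\cap C([0,T];H)$; this gives the required $\gamma_2=1/\alpha$ Hölder continuity (and $\gamma_2\le\gamma_1$ is arranged by the standing assumption $\gamma_1>1/2\ge 1/\alpha$ when $\alpha\ge2$). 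I would state this Hölder estimate explicitly as part of the conclusion, since it is what makes the solution fit the hypotheses of Lemma \ref{equvilence} and hence genuinely solves \eqref{eqn:spde2}.
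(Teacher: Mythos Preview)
Your approach is essentially identical to the paper's: define the transformed operator $\widehat A(t,v)=z_t(\omega)A(z_t^{-1}(\omega)v)$, verify $(H1)$--$(H4)$ condition by condition, and invoke Theorem \ref{thm:var_ex}. The paper proceeds exactly this way, and your treatment of \eqref{c3*}$\Rightarrow$\eqref{c3} for uniqueness is also the same.

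There is one recurring arithmetic slip worth correcting: in your $(H3)$ and $(H2)$ computations you write the prefactor as $z_t(\omega)z_t^{-1}(\omega)=1$, but in fact
\[
2\,{}_{V^*}\<\widehat A(t,v),v\>_V
=2e^{-\beta\omega_t}\,{}_{V^*}\<A(e^{\beta\omega_t}v),v\>_V
=2e^{-2\beta\omega_t}\,{}_{V^*}\<A(e^{\beta\omega_t}v),e^{\beta\omega_t}v\>_V,
\]
so the correct prefactor is $z_t^2=e^{-2\beta\omega_t}$. With this correction the paper obtains the cleaner constants $c=\min_{t\in[0,T]}\gamma e^{(\alpha-2)\beta\omega_t}$, $g(t)=K$, $f(t)=Ce^{-2\beta\omega_t}$ in $(H3)$, and in $(H2)$ the factor $z_t^2$ cancels exactly against $\|z_t^{-1}(v_1-v_2)\|_H^2=z_t^{-2}\|v_1-v_2\|_H^2$, so no extra $e^{2|\beta|\sup|\omega_t|}$ is needed. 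Your cruder bounds via $\sup_{[0,T]}|\omega_t|$ still yield valid $(H2)$--$(H3)$, so the conclusion stands, but the displayed equality is incorrect as written.

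Finally, your last paragraph on the $C^{1/\alpha}$ H\"older regularity of $\<\tilde u,v\>_H$ is correct and useful, but note that it is not part of the proof of Theorem \ref{wellposed-rpde} itself; the paper isolates that step inside Lemma \ref{equvilence} and only invokes it later for Corollary \ref{well-posedness-o}.
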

\begin{proof}
	In order to prove the well-posedness of the random PDEs \eqref{random PDE}, we check the conditions $(H1)$-$(H4)$ step by step.
	
	\textit{\textbf{Check (H1):}} Let $\tilde{A}(t,v):=z_t(\omega)A(z^{-1}_{t}(\omega)v),v\in V$. For any $v,v_1,v_2\in V$  and $t\in\mathbb{R}$, then
$z^{-1}_{t}(\omega)v_1,z^{-1}_{t}(\omega)v_2\in V, z_t(\omega)v\in V$. So the map $s\mapsto {}_{V^*}\<\tilde{A}(t,v_1+sv_2),v\>_{V}$ is continuous on $\mathbb{R}$ by $(A1)$.
	
	\textit{\textbf{Check (H2):}} By $(A2)$, we have
	$$2_{V^*}\<\tilde A(t,v_1)-\tilde A(t,v_2),v_1-v_2\>_{V}\le\left(C+\eta(z^{-1}_t(\omega)v_{1})+\rho(z^{-1}_t(\omega)v_{2})\right)\|v_{1}-v_{2}\|_{H}^{2}.$$
	Thus, the operator $\tilde{A}$ satisfies local monotonicity (H2).
	
	\textit{\textbf{Check (H3):}} By $(A3)$ and the continuity of $\omega$, we have
	\begin{equation}\nonumber
		\begin{aligned}
			2{}_{V^{*}}\<\tilde{A}(t,v),v\>_{V}\le -\g e^{(\alpha-2)\beta\omega_t}\|v\|^{\alpha}_{V}+K\|v\|^2_{H}+Ce^{-2\beta\omega_t}.
		\end{aligned}
	\end{equation}
	Therefore, let $c=\min_{t\in[0,T]}\g e^{(\alpha-2)\beta\omega_t}, g(t)=K, f(t)= Ce^{-2\beta\omega_t}$, then $(H3)$ holds.
	
	\textit{\textbf{Check (H4):}} By $(A4)$,  we have
	\begin{equation}\nonumber
		\begin{aligned}
			\|\tilde{A}(t,v)\|_{V^*}^{\frac{\alpha}{\alpha-1}}&\leq
Ce^{-\frac{\beta\alpha}{\alpha-1}\omega_t}(1+e^{\alpha\beta\omega_t}\|v\|_{V}^{\alpha})(1+e^{\beta\varpi\omega_t}\|v\|_{H}^{\varpi})\\
			&\leq C(\omega,T,\varpi,\alpha,\beta)(1+\|v\|_{V}^{\alpha})(1+\|v\|_{H}^{\varpi})
		\end{aligned}
	\end{equation}
	In addition, due to $\eta(t,v)=\eta(z^{-1}_t(\omega)v),\rho(t,v)=\eta(z^{-1}_t(\omega)v)$, and the condition  \eqref{c3*} shows that there exists a constant
$C(\alpha,\vartheta,\varpi,T,\omega)$ such that \eqref{c3} holds. Therefore, Theorem \ref{thm:var_ex}
can be applied, we complete the proof.
\end{proof}
\begin{thm}\label{wellposed-rpde1}
	Assume that $V\subseteq H$ is compact, under the assumptions $(A1^\prime)$-$(A4^\prime)$ and $u_\tau\in H,\tau\in\mathbb{R}$, for each $\omega\in\Omega$, random PDE \eqref{random
PDE1} has a variational solution  $\tilde{u}(\cdot,\tau,\omega,u_0)\in C([\tau,\tau+T],H)\cap L^{\alpha}([\tau,\tau+T],V)$  and
	if there exist non-negative constants $C$, $\vartheta$ such that
	\begin{equation}
		\eta(v)+\rho(v)\le C(1+\|v\|_{V}^{\alpha})(1+\|v\|_{H}^{\vartheta}),\ v\in V,\label{c3*-1}
	\end{equation}	
	then the solution of \eqref{random PDE1} is unique.
\end{thm}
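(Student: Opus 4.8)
The plan is to mirror the proof of Theorem \ref{wellposed-rpde}: reduce the random PDE \eqref{random PDE1} to the deterministic variational framework of Theorem \ref{thm:var_ex}. Fix $\omega\in\Omega$ and set $\tilde A(t,v):=z_t(\omega)A(t,z_t^{-1}(\omega)v)$ for $v\in V$ and $t\in[\tau,\tau+T]$, so that \eqref{random PDE1} reads $\tilde u'(t)=\tilde A(t,\tilde u(t))$ with initial datum $\tilde u_\tau=e^{-\beta\omega_\tau}u_\tau\in H$. Up to the harmless time-shift $t\mapsto t-\tau$ it then suffices to verify that $\tilde A$ satisfies $(H1)$--$(H4)$ on $[0,T]$ (and, for uniqueness, the structural bound \eqref{c3}) and apply Theorem \ref{thm:var_ex}; the compactness $V\subseteq H$ is assumed.

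Next I would check the four conditions, abbreviating $w_i=z_t^{-1}(\omega)v_i$ and using $v_1-v_2=z_t(\omega)(w_1-w_2)$. Hemicontinuity $(H1)$ is immediate from $(A1^\prime)$ since $z_t(\omega)$ is a positive scalar. For local monotonicity, $(A2^\prime)$ gives
\[
2{}_{V^{*}}\<\tilde A(t,v_1)-\tilde A(t,v_2),v_1-v_2\>_{V}=2z_t^2(\omega)\,{}_{V^{*}}\<A(t,w_1)-A(t,w_2),w_1-w_2\>_{V}\le\big(f(t)+\eta(w_1)+\rho(w_2)\big)\|v_1-v_2\|_H^2,
\]
the factors $z_t^2(\omega)$ and $z_t^{-2}(\omega)$ cancelling; hence $(H2)$ holds with $\tilde\eta(t,v)=\eta(z_t^{-1}(\omega)v)$ and $\tilde\rho(t,v)=\rho(z_t^{-1}(\omega)v)$, which are measurable and, for each fixed $t$, locally bounded in $v$ because $z_t^{-1}(\omega)$ is a scalar. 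From $(A3^\prime)$,
\[
2{}_{V^{*}}\<\tilde A(t,v),v\>_{V}\le-c\,e^{(\alpha-2)\beta\omega_t}\|v\|_V^\alpha+g(t)\|v\|_H^2+e^{-2\beta\omega_t}f(t),
\]
and from $(A4^\prime)$, $\|\tilde A(t,v)\|_{V^{*}}^{\alpha/(\alpha-1)}\le C\,e^{-\frac{\beta\alpha}{\alpha-1}\omega_t}\big(f(t)+e^{\alpha\beta\omega_t}\|v\|_V^\alpha\big)\big(1+e^{\varpi\beta\omega_t}\|v\|_H^\varpi\big)$; and if \eqref{c3*-1} holds, then $\tilde\eta(t,v)+\tilde\rho(t,v)\le C(1+\|z_t^{-1}(\omega)v\|_V^\alpha)(1+\|z_t^{-1}(\omega)v\|_H^\vartheta)$.

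The one point that needs care — and the only ingredient beyond the autonomous proof — is that the realization $\omega$ is $\gamma_1$-H\"older, hence continuous, on the compact interval $[\tau,\tau+T]$, so that $\sup_{t\in[\tau,\tau+T]}\big(z_t(\omega)\vee z_t^{-1}(\omega)\big)<\infty$ and $\inf_{t\in[\tau,\tau+T]}c\,e^{(\alpha-2)\beta\omega_t}=:\tilde c\ge0$. Bounding the exponential prefactors above by these constants turns the displays above into $2{}_{V^{*}}\<\tilde A(t,v),v\>_{V}\le-\tilde c\|v\|_V^\alpha+\tilde g(t)\|v\|_H^2+\tilde f(t)$ and $\|\tilde A(t,v)\|_{V^{*}}^{\alpha/(\alpha-1)}\le \tilde C\big(\tilde f(t)+\|v\|_V^\alpha\big)\big(1+\|v\|_H^\varpi\big)$ with the common choices $\tilde g(t)=g(t)$ and $\tilde f(t)=C(\omega,T,\beta)(1+|f(t)|)\in L^1([\tau,\tau+T];\mathbb{R})$ (enlarging $C$ so that $\tilde f$ also dominates the $f(t)$ appearing in the monotonicity bound), and \eqref{c3} follows with a constant depending on $\alpha,\vartheta,\varpi,T,\omega$. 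Theorem \ref{thm:var_ex}, applied on $[0,T]$ after the shift and using that its $(H2)$ already permits the time-dependence of $\tilde\eta,\tilde\rho$, then yields a variational solution $\tilde u(\cdot,\tau,\omega,u_0)\in C([\tau,\tau+T];H)\cap L^\alpha([\tau,\tau+T];V)$ of \eqref{random PDE1} — its integral term coinciding with that of Definition \ref{random-t} because ${}_{V^{*}}\<\tilde A(t,w),v\>_{V}=e^{-\beta\omega_t}\,{}_{V^{*}}\<A(t,z_t^{-1}(\omega)w),v\>_{V}$ — and, under \eqref{c3*-1}, its uniqueness. I do not anticipate a genuine obstacle; the argument is structurally identical to that of Theorem \ref{wellposed-rpde}.
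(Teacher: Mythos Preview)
Your proposal is correct and follows essentially the same route as the paper: define $\tilde A(t,v)=z_t(\omega)A(t,z_t^{-1}(\omega)v)$, verify $(H1)$--$(H4)$ using $(A1')$--$(A4')$ together with the boundedness of $e^{\pm\beta\omega_t}$ on the compact interval, and invoke Theorem~\ref{thm:var_ex}. You are in fact slightly more careful than the paper in two places: you make the harmless time-shift to $[0,T]$ explicit, and you choose a single $\tilde f$ that simultaneously dominates the $f$-terms arising in $(H2)$, $(H3)$ and $(H4)$, whereas the paper tacitly uses $f(t)$ in one place and $f(t)e^{-2\beta\omega_t}$ in another.
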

\begin{proof}
	Similarly, for the well-posedness of the random PDEs \eqref{random PDE1}, we  also check the conditions $(H1)$-$(H4)$ step by step.
	
	\textit{\textbf{Check (H1):}} Let $\tilde{A}(t,v):=z_t(\omega)A(t,z^{-1}_{t}(\omega)v),v\in V$. For any $v,v_1,v_2\in V$  and $t\in\mathbb{R}$, then
$z^{-1}_{t}(\omega)v_1,z^{-1}_{t}(\omega)v_2\in V, z_t(\omega)v\in V$. So the map $s\mapsto {}_{V^*}\<\tilde{A}(t,v_1+sv_2),v\>_{V}$ is continuous on $\mathbb{R}$ by $(A1^\prime)$.
	
	\textit{\textbf{Check (H2):}} By $(A2^\prime)$, we have
	$$2_{V^*}\<\tilde A(t,v_1)-\tilde A(t,v_2),v_1-v_2\>_{V}\le\left(f(t)+\eta(z^{-1}_t(\omega)v_{1})+\rho(z^{-1}_t(\omega)v_{2})\right)\|v_{1}-v_{2}\|_{H}^{2}.$$
	Thus, the operator $\tilde{A}$ satisfies local monotonicity (H2).
	
	\textit{\textbf{Check (H3):}} By $(A3^\prime)$ and the continuity of $\omega$, we have
	\begin{equation}\nonumber
		\begin{aligned}
			2{}_{V^{*}}\<\tilde{A}(t,v),v\>_{V}\le -c e^{(\alpha-2)\omega_t}\|v\|^{\alpha}_{V}+g(t)\|v\|^2_{H}+f(t)e^{-2\beta\omega_t}.
		\end{aligned}
	\end{equation}
	Therefore, let $\tilde c=\min_{t\in[\tau,\tau+T]}c e^{(\alpha-2)\beta\omega_t}, \tilde g(t)=g(t), \tilde f(t)= f(t)e^{-2\beta\omega_t}$, then $(H3)$ holds.
	
	\textit{\textbf{Check (H4):}} By $(A4^\prime)$,  we have
	\begin{equation}\nonumber
		\begin{aligned}
			\|\tilde{A}(t,v)\|_{V^*}^{\frac{\alpha}{\alpha-1}}&\leq
Ce^{-\frac{\beta\alpha}{\alpha-1}\omega_t}(f(t)+e^{\alpha\beta\omega_t}\|v\|_{V}^{\alpha})(1+e^{\beta\varpi\omega_t}\|v\|_{H}^{\varpi})\\
			&\leq C(\omega,T,\varpi,\alpha,\beta)(f(t)+\|v\|_{V}^{\alpha})(1+\|v\|_{H}^{\varpi})
		\end{aligned}
	\end{equation}
	In addition, due to $\eta(t,v)=\eta(z^{-1}_t(\omega)v),\rho(t,v)=\eta(z^{-1}_t(\omega)v)$,  the condition  \eqref{c3*-1} shows that there exists a constant
$C(\alpha,\vartheta,\beta,T,\omega)$ such that \eqref{c3} holds. Therefore, Theorem \ref{thm:var_ex} can be applied, we complete the proof.
\end{proof}
According to Lemma \ref{equvilence}, we have the following well-posedness of the stochastic partial differential equations  \eqref{eqn:spde2}.
\begin{cor}\label{well-posedness-o}
	Under the assumptions in Theorem \ref{wellposed-rpde}, stochastic partial differential equations \eqref{eqn:spde2} have a  solution $u\in C([0,T],H)\cap L^{\alpha}([0,T],V)$ and
$\<u,v\>\in C^{\gamma_2}([0,T];\mathbb{R})$ for $\omega\in\Omega$, where $\gamma_1+\gamma_2>1$ and $\gamma_2\leq \frac{1}{\alpha}\wedge \gamma_1$.
\end{cor}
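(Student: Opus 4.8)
\medskip
\noindent\textbf{Proof idea.}
The plan is to deduce the statement from Theorem \ref{wellposed-rpde} together with the equivalence Lemma \ref{equvilence}, the only nontrivial point being the time-H\"older regularity of the solution of the random PDE \eqref{random PDE}. Fix $\omega\in\Omega$. By Theorem \ref{wellposed-rpde}, under $(A1)$--$(A4)$ with $u_0\in H$ the random PDE \eqref{random PDE} admits a variational solution $\tilde u(\cdot,\omega,u_0)\in C([0,T];H)\cap L^{\alpha}([0,T];V)$, which is unique when \eqref{c3*} holds. In order to apply Lemma \ref{equvilence} and return to \eqref{eqn:spde2} I must verify the remaining requirement in hypothesis (i) of that lemma: that $t\mapsto\<\tilde u(t,\omega,u_0),v\>_H$ lies in $C^{\gamma_2}([0,T];\mathbb{R})$ for every $v\in V$, where $\gamma_1+\gamma_2>1$ and $\gamma_2\le\tfrac{1}{\alpha}\wedge\gamma_1$.

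For this step I would argue exactly as for the bound \eqref{eqn:holder-e-2} in the proof of Lemma \ref{equvilence}. Using the variational identity of Definition \ref{random-t}, for $0\le s\le t\le T$,
\begin{align*}
|\<\tilde u(t,\omega,u_0),v\>_H-\<\tilde u(s,\omega,u_0),v\>_H|\le\int_s^t e^{-\beta\omega_r}\,\|A(z_r^{-1}(\omega)\tilde u(r,\omega,u_0))\|_{V^*}\,\|v\|_V\,dr,
\end{align*}
and then I bound $e^{-\beta\omega_r}$ and the exponential factors produced by $z_r^{-1}(\omega)$ by constants depending on $\sup_{[0,T]}|\omega_r|$, apply the growth condition $(A4)$, use H\"older's inequality in $r$ with exponents $\tfrac{\alpha}{\alpha-1}$ and $\alpha$, and finally invoke $\tilde u\in L^{\alpha}([0,T];V)$ together with $\sup_{r\in[0,T]}\|\tilde u(r,\omega,u_0)\|_H<\infty$ (valid since $\tilde u\in C([0,T];H)$). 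This yields an estimate of the form $|\<\tilde u(t,\omega,u_0),v\>_H-\<\tilde u(s,\omega,u_0),v\>_H|\le C(\omega,v,T,\alpha,\varpi,\beta)\,(t-s)^{1/\alpha}$ with a finite constant, so $\<\tilde u(\cdot,\omega,u_0),v\>_H\in C^{1/\alpha}([0,T];\mathbb{R})\subseteq C^{\gamma_2}([0,T];\mathbb{R})$ because $\gamma_2\le\tfrac{1}{\alpha}$.

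Once hypothesis (i) of Lemma \ref{equvilence} is in place, the implication (i)$\Rightarrow$(ii) gives immediately that $u(t)=z_t^{-1}(\omega)\tilde u(t)\in C([0,T];H)\cap L^{\alpha}([0,T];V)$ is a variational solution of \eqref{eqn:spde2} and that $\<u,v\>_H\in C^{\gamma_2}([0,T];\mathbb{R})$ for every $v\in V$; uniqueness under \eqref{c3*} transfers through the bijection $u\leftrightarrow\tilde u$. I expect no obstacle beyond the H\"older estimate above: condition $(A4)$ caps the attainable exponent at $\tfrac{1}{\alpha}\le\tfrac{1}{2}$, so the only genuine restriction is the compatibility $\gamma_1+\gamma_2>1$ with $\gamma_2\le\tfrac1\alpha\wedge\gamma_1$ --- already imposed in the statement, and forcing $\gamma_1$ (hence $H$) close to $1$ when $\alpha$ is large --- while everything else is a direct transcription of Theorem \ref{wellposed-rpde} and Lemma \ref{equvilence}.
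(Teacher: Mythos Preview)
Your proposal is correct and follows exactly the paper's approach: the paper proves this corollary in one line (``According to Lemma \ref{equvilence}''), treating it as an immediate consequence of Theorem \ref{wellposed-rpde} combined with the equivalence Lemma \ref{equvilence}. You have in fact been more careful than the paper by explicitly verifying the H\"older regularity $\langle\tilde u(\cdot,\omega,u_0),v\rangle_H\in C^{1/\alpha}\subseteq C^{\gamma_2}$ needed for hypothesis (i) of Lemma \ref{equvilence}; your argument for this step is identical to the estimate \eqref{eqn:holder-e-2} already carried out inside the proof of that lemma, so nothing new is required.
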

For the well-posdness of stochastic partial differential equations \eqref{eqn:spde3}, we have
\begin{cor}\label{well-posedness-o-1}
	Under the assumptions in Theorem \ref{wellposed-rpde1}, stochastic partial differential equations \eqref{eqn:spde3} have a solution $u\in C([0,T],H)\cap
L^{\alpha}([\tau,\tau+T],V)$ and $\<u,v\>\in C^{\gamma_2}([\tau,\tau+T];\mathbb{R})$ for $\omega\in\Omega$, where $\gamma_1+\gamma_2>1$ and $\gamma_2\leq \frac{1}{\alpha}\wedge
\gamma_1$.
\end{cor}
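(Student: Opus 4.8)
The plan is to mirror the proof of Corollary \ref{well-posedness-o}: first produce a variational solution of the random PDE \eqref{random PDE1} via Theorem \ref{wellposed-rpde1}, then establish the H\"older regularity of $t\mapsto\<\tilde u(t),v\>_H$ that is needed to apply the equivalence Lemma \ref{equvilence1}, and finally read off the solution of \eqref{eqn:spde3} together with its regularity from that lemma.

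First, fix $\o\in\Omega$. By Theorem \ref{wellposed-rpde1}, the random PDE \eqref{random PDE1} with initial datum $\tilde u_\tau=e^{-\b\o_\tau}u_\tau$ has a variational solution $\tilde u(\cdot,\tau,\o,\tilde u_\tau)\in C([\tau,\tau+T];H)\cap L^{\a}([\tau,\tau+T];V)$, and under the extra condition \eqref{c3*-1} this solution is unique.

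Second, I would show that $\<\tilde u(\cdot,\tau,\o,\tilde u_\tau),v\>_H\in C^{\gamma_2}([\tau,\tau+T];\R)$ for every $v\in V$, whenever $\gamma_1+\gamma_2>1$ and $\gamma_2\le\frac{1}{\a}\wedge\gamma_1$. By the variational formulation in Definition \ref{random-t}, for $\tau\le s\le t\le\tau+T$ one has
\[
|\<\tilde u(t),v\>_H-\<\tilde u(s),v\>_H|\le\Big(\sup_{r\in[\tau,\tau+T]}e^{-\b\o_r}\Big)\|v\|_V\int_s^t\|A(r,z_r^{-1}(\o)\tilde u(r))\|_{V^*}\,dr.
\]
Into this I would insert the growth condition $(A4^\prime)$ with $v=z_r^{-1}(\o)\tilde u(r)$, absorb the bounded exponential factors arising from the powers of $z_r^{-1}(\o)=e^{\b\o_r}$ on the compact interval $[\tau,\tau+T]$ into a constant $C(\o,\b,T,\a,\varpi)$, bound $\big(1+\|\tilde u(r)\|_H^{\varpi}\big)^{\frac{\a-1}{\a}}$ by $1+\sup_{r\in[\tau,\tau+T]}\|\tilde u(r)\|_H^{\frac{\varpi(\a-1)}{\a}}$ using $\tilde u\in C([\tau,\tau+T];H)$, and then apply H\"older's inequality with exponents $\frac{\a}{\a-1}$ and $\a$ to $\int_s^t\big(|f(r)|+\|\tilde u(r)\|_V^{\a}\big)^{\frac{\a-1}{\a}}\,dr$. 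Since $f\in L^1([\tau,\tau+T];\R)$ and $\tilde u\in L^{\a}([\tau,\tau+T];V)$, this produces exactly an estimate of the form \eqref{eqn:holder-e-2}, so the increment is bounded by a constant times $(t-s)^{1/\a}$. Hence $\<\tilde u,v\>_H$ is $\frac{1}{\a}$-H\"older continuous on $[\tau,\tau+T]$, and a fortiori $\gamma_2$-H\"older continuous because $\gamma_2\le\frac{1}{\a}\wedge\gamma_1$.

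Third, the two facts just established are precisely statement (i) of Lemma \ref{equvilence1}. By the equivalence in that lemma, $u(t):=z_t^{-1}(\o)\tilde u(t)=e^{\b\o_t}\tilde u(t)$ belongs to $C([\tau,\tau+T];H)\cap L^{\a}([\tau,\tau+T];V)$, satisfies $\<u,v\>_H\in C^{\gamma_2}([\tau,\tau+T];\R)$ for all $v\in V$, and is a variational solution of \eqref{eqn:spde3}; moreover, if \eqref{c3*-1} holds, uniqueness for \eqref{random PDE1} transfers to uniqueness for \eqref{eqn:spde3} through the same equivalence. I expect the second step to be the only real point: one must keep track that the exponent $1/\a$ coming out of the time integral of the drift is compatible with the standing constraint $\gamma_1+\gamma_2>1$ (which forces $1/\a>1-\gamma_1$, explaining why the noise H\"older exponent $\gamma_1$ is taken close to $H$), and that the merely integrable-in-time coefficient $f$ in $(A4^\prime)$ does not spoil the H\"older bound — which it does not, since $\int_s^t|f(r)|\,dr$ enters only inside the factor raised to the power $\frac{\a-1}{\a}$ and is uniformly bounded by $\|f\|_{L^1}$. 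All remaining computations are routine transcriptions of the proofs of Lemma \ref{equvilence} and Theorem \ref{wellposed-rpde1} to the nonautonomous interval $[\tau,\tau+T]$.
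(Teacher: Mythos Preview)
Your proposal is correct and follows the same route as the paper: the paper states this corollary without proof, deferring to Lemma \ref{equvilence1} just as Corollary \ref{well-posedness-o} defers to Lemma \ref{equvilence}, and your three steps (apply Theorem \ref{wellposed-rpde1}, verify the $\frac{1}{\a}$-H\"older regularity of $\<\tilde u,v\>_H$ via $(A4^\prime)$ exactly as in \eqref{eqn:holder-e-2}, then invoke the equivalence) are precisely what that deferral entails. Your explicit handling of the time-dependent coefficient $f\in L^1$ in the H\"older estimate is the only new wrinkle compared to the autonomous case, and you treat it correctly.
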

For the autonomous system \eqref{eqn:spde2} and the nonautonomous  system \eqref{eqn:spde3}, based on Theorem \ref{T2}, we have the following  continuous dependence for  the initial
data.
\begin{thm}\label{dependece-1}
	Assume that $V\subseteq H$ is compact, under the assumptions $(A1)$-$(A4)$, for each $\omega\in\Omega$, $\tilde{u}_i$ are the solution of \eqref{random PDE}
	with $u_{0}^i\in H$, $i=1,2$, and
	satisfy
	$$ \int_0^T\left( \rho(z_s^{-1}(\omega)\tilde u_1(s))+\eta(z_s^{-1}(\omega)\tilde u_2(s)) \right)  d s<\infty. $$
	Then there exists a constant $C$ such that
	\begin{equation}
		\begin{split}
			\|\tilde u_1(t)-\tilde u_2(t)\|_H^2
			\le &   \exp\left[\int_0^t \left(C+\rho(z_s^{-1}(\omega)\tilde u_1(s))+\eta(z_s^{-1}(\omega)\tilde u_2(s)) \right) d s  \right]\\
			& \ \  \cdot \left( \|u_{1,0}-u_{2,0}\|_H^2 \right), \ t\in[0, T].
		\end{split}
	\end{equation}	
	In addition, the solution of  stochastic partial differential equations \eqref{eqn:spde2} is continuously dependent on $u_0$, i.e.
	\begin{equation}
		\begin{split}
			\|u_1(t)- u_2(t)\|_H^2
			\le &   C(T,\omega,\beta)\exp\left[\int_0^t \left(C+\rho(u_1(s))+\eta( u_2(s)) \right) d s  \right]\\
			& \ \  \cdot \left( \|u_{1,0}-u_{2,0}\|_H^2 \right), \ t\in[0, T],
		\end{split}
	\end{equation}	
	where $u_i(t)=z_t^{-1}(\omega)\tilde{u}_i(t)$ are the solution of \eqref{eqn:spde2}
	with $u_{0}^i\in H$, $i=1,2$.
\end{thm}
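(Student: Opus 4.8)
The plan is to apply Theorem \ref{T2} to the transformed operator and then undo the transformation. Recall from the proof of Theorem \ref{wellposed-rpde} that, setting $\tilde A(t,v):=z_t(\omega)A(z_t^{-1}(\omega)v)$, the random PDE \eqref{random PDE} is precisely the deterministic evolution equation \eqref{eq:app_e_e} for $\tilde A$, and that $\tilde A$ satisfies $(H1)$--$(H4)$ for each fixed $\omega\in\Omega$. Moreover, since the factors $z_t(\omega)$ and $z_t^{-1}(\omega)=1/z_t(\omega)$ cancel pairwise, $(A2)$ gives
\[
2{}_{V^*}\<\tilde A(t,v_1)-\tilde A(t,v_2),v_1-v_2\>_V\le\bigl(C+\eta(z_t^{-1}(\omega)v_1)+\rho(z_t^{-1}(\omega)v_2)\bigr)\|v_1-v_2\|_H^2,
\]
so in the notation of Theorem \ref{T2} one has $f(t)\equiv C$, $\rho(s,v)=\rho(z_s^{-1}(\omega)v)$ and $\eta(s,v)=\eta(z_s^{-1}(\omega)v)$. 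Since $\tilde u_1,\tilde u_2$ are the variational solutions of \eqref{random PDE} with data $u_0^1,u_0^2$, the hypothesis $\int_0^T\bigl(\rho(z_s^{-1}(\omega)\tilde u_1(s))+\eta(z_s^{-1}(\omega)\tilde u_2(s))\bigr)\,ds<\infty$ is exactly the integrability requirement in Theorem \ref{T2}. Hence Theorem \ref{T2} yields directly the first displayed inequality of the statement.

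For the second assertion I would pass back to the SPDE via $u_i(t)=z_t^{-1}(\omega)\tilde u_i(t)=e^{\beta\omega_t}\tilde u_i(t)$, which is the variational solution of \eqref{eqn:spde2} by Lemma \ref{equvilence}. Since $z_t^{-1}(\omega)>0$,
\[
\|u_1(t)-u_2(t)\|_H^2=e^{2\beta\omega_t}\,\|\tilde u_1(t)-\tilde u_2(t)\|_H^2,
\]
and, $\omega$ being continuous on the compact interval $[0,T]$, $\sup_{t\in[0,T]}e^{2\beta\omega_t}=:C(T,\omega,\beta)<\infty$. Combining this with the first inequality, rewriting $\rho(z_s^{-1}(\omega)\tilde u_1(s))=\rho(u_1(s))$ and $\eta(z_s^{-1}(\omega)\tilde u_2(s))=\eta(u_2(s))$ via $u_i(s)=z_s^{-1}(\omega)\tilde u_i(s)$, and using $u_{i,0}=\tilde u_{i,0}$ (because $\omega_0=0$, so $z_0(\omega)=1$), gives the second displayed inequality.

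The only points requiring a little care are the verification that the additive constant in the local monotonicity estimate for $\tilde A$ is genuinely the constant $C$ from $(A2)$ rather than something $\omega$-dependent --- this is the cancellation $z_t(\omega)z_t^{-1}(\omega)=1$ already exploited in Theorem \ref{wellposed-rpde} --- and the observation that the integrability condition imposed in the statement is tailored to match the one in Theorem \ref{T2}. No genuinely new estimate is needed: the argument is a direct specialization of Theorem \ref{T2} followed by the deterministic change of variables $u=z^{-1}\tilde u$, so I do not expect a serious obstacle beyond bookkeeping of the constants depending on $(T,\omega,\beta)$.
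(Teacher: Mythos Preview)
Your proposal is correct and follows exactly the route the paper intends: the paper does not give a detailed proof of this theorem but states it ``based on Theorem~\ref{T2}'', and your argument is precisely the specialization of Theorem~\ref{T2} to the transformed operator $\tilde A(t,v)=z_t(\omega)A(z_t^{-1}(\omega)v)$ already verified in Theorem~\ref{wellposed-rpde}, followed by the change of variables $u=z^{-1}\tilde u$.
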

\begin{thm}\label{dependece-2}
	Assume that $V\subseteq H$ is compact, under the assumptions $(A1^\prime)$-$(A4^\prime)$, for each $\omega\in\Omega$, $\tilde{u}_i$ are the solution of \eqref{random PDE1}
	with $u_{\tau}^i\in H$, $i=1,2$, and
	satisfy
	$$ \int_\tau^{T+\tau}\left( \rho(z_s^{-1}(\omega)\tilde u_1(s))+\eta(z_s^{-1}(\omega)\tilde u_2(s)) \right)  d s<\infty. $$
	Then
	\begin{equation}
		\begin{split}
			\|\tilde u_1(t)-\tilde u_2(t)\|_H^2
			\le &   \exp\left[\int_\tau^t \left(\!f(t)\!+\!\rho(z_s^{-1}(\omega)\tilde u_1(s))+\eta(z_s^{-1}(\omega)\tilde u_2(s)) \right) d s  \right]\\
			& \ \  \cdot \left( \|\tilde u_{1,\tau}-\tilde u_{2,\tau}\|_H^2 \right), \ t\in[\tau, \tau+T].
		\end{split}
	\end{equation}	
	In addition, the solution of  stochastic partial differential equations \eqref{eqn:spde3} is continuously dependent on $u_\tau$, i.e.
	\begin{equation}
		\begin{split}
			\|u_1(t)- u_2(t)\|_H^2
			\le &   C(T,\tau,\omega,\beta)\exp\left[\int_\tau^t \left(f(t)+\rho(u_1(s))+\eta(u_2(s)) \right) d s  \right]\\
			& \ \  \cdot \left( \| u_{1,\tau}- u_{2,\tau}\|_H^2 \right), \ t\in[\tau, \tau+T],
		\end{split}
	\end{equation}	
	where $u_i(t)=z_t^{-1}(\omega)\tilde{u}_i(t)$ are the solution of \eqref{eqn:spde3}
	with $u_{\tau}^i\in H$, $i=1,2$.
\end{thm}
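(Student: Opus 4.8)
The plan is to deduce both estimates from the deterministic comparison result Theorem \ref{T2}, applied to the transformed operator $\tilde A(t,v):=z_t(\omega)A(t,z_t^{-1}(\omega)v)$ on the interval $[\tau,\tau+T]$ (after the obvious time shift $t\mapsto t-\tau$). In the proof of Theorem \ref{wellposed-rpde1} it was already verified that $\tilde A$ satisfies $(H1)$--$(H4)$; the point worth retracing for the present statement is the local monotonicity. Writing $v_1-v_2=z_t(\omega)\big(z_t^{-1}(\omega)v_1-z_t^{-1}(\omega)v_2\big)$ and using $z_t(\omega)z_t^{-1}(\omega)=1$, the prefactors $z_t^2(\omega)$ and $z_t^{-2}(\omega)$ cancel, so by $(A2^\prime)$
\[
2{}_{V^{*}}\<\tilde A(t,v_1)-\tilde A(t,v_2),v_1-v_2\>_{V}\le\big(f(t)+\eta(z_t^{-1}(\omega)v_1)+\rho(z_t^{-1}(\omega)v_2)\big)\|v_1-v_2\|_H^2,
\]
i.e.\ the functions playing the role of $\rho,\eta$ in Theorem \ref{T2} are $\rho(z_{\cdot}^{-1}(\omega)\cdot)$ and $\eta(z_{\cdot}^{-1}(\omega)\cdot)$, while the $(H2)$-function is the original $f$, with no surviving $z$-weight. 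The standing hypothesis $\int_\tau^{T+\tau}\big(\rho(z_s^{-1}(\omega)\tilde u_1(s))+\eta(z_s^{-1}(\omega)\tilde u_2(s))\big)\,ds<\infty$ is exactly the finiteness condition required by Theorem \ref{T2}.

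Granting this, applying Theorem \ref{T2} to $\tilde A$ with the two initial data $\tilde u_{i,\tau}$, $i=1,2$, gives at once
\[
\|\tilde u_1(t)-\tilde u_2(t)\|_H^2\le\exp\left[\int_\tau^t\big(f(t)+\rho(z_s^{-1}(\omega)\tilde u_1(s))+\eta(z_s^{-1}(\omega)\tilde u_2(s))\big)\,ds\right]\|\tilde u_{1,\tau}-\tilde u_{2,\tau}\|_H^2
\]
for $t\in[\tau,\tau+T]$, which is the first assertion. For the second, I would use $u_i(t)=z_t^{-1}(\omega)\tilde u_i(t)$, so that $\|u_1(t)-u_2(t)\|_H^2=e^{2\beta\omega_t}\|\tilde u_1(t)-\tilde u_2(t)\|_H^2$ and, at the initial time, $\|\tilde u_{1,\tau}-\tilde u_{2,\tau}\|_H^2=e^{-2\beta\omega_\tau}\|u_{1,\tau}-u_{2,\tau}\|_H^2$; moreover $z_s^{-1}(\omega)\tilde u_i(s)=u_i(s)$, so $\rho,\eta$ evaluated along the transformed solutions are just $\rho(u_i(s)),\eta(u_i(s))$. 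Since $\omega$ admits a continuous (indeed $\gamma_1$-H\"{o}lder) version on $[\tau,\tau+T]$, the factor $e^{2\beta(\omega_t-\omega_\tau)}$ is bounded by a constant $C(T,\tau,\omega,\beta)$ uniformly in $t\in[\tau,\tau+T]$, and substituting into the previous bound yields the second displayed inequality.

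The whole argument is essentially bookkeeping around the transformation of Section \ref{sec:setup}; the only thing that really needs care — and the single place where something could go wrong — is the cancellation of the $z$-weights in the monotonicity step, in contrast with the coercivity $(H3)$ and the growth $(H4)$, where the weights do survive and are absorbed into the new $\tilde f,\tilde g$ and into the $(T,\tau,\omega)$-dependent constant. Once that cancellation is observed, the estimate for $u_i$ follows from the one for $\tilde u_i$ simply by multiplying through by the bounded factor $e^{2\beta(\omega_t-\omega_\tau)}$, exactly as in the proof of Theorem \ref{dependece-1}.
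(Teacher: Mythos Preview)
Your proof is correct and follows essentially the same approach as the paper: the paper does not give an explicit proof of this theorem but simply states that it is ``based on Theorem \ref{T2}'', and you have correctly supplied the details by applying Theorem \ref{T2} to the transformed operator $\tilde A(t,v)=z_t(\omega)A(t,z_t^{-1}(\omega)v)$, whose $(H2)$-verification was already done in Theorem \ref{wellposed-rpde1}. Your explicit check of the $z_t^2/z_t^{-2}$ cancellation in the monotonicity step and the conversion back to $u_i$ via the bounded factor $e^{2\beta(\omega_t-\omega_\tau)}$ are exactly what is needed.
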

\section{Random attractors\label{sec:construction}}
In this section, we shall consider the random attractors of the system \eqref{eqn:spde2} and \eqref{eqn:spde3}. To this end, we first illustrate that its global solutions in the
sense of Definition  \ref{def:soln_pathw} can generate a random dynamical system or continuous cocycle. Firstly, we give the basic description of the fBm, i.e. the metric dynamical
system.

First of all, let $\Omega=C_0(\mathbb{R},\mathbb{R})$, namely the set of continuous functions and its value are zero at zero, the compact open topology is equipped for $\Omega$.
$\mathcal{F}$ is a Borel algebra which is generated by $\Omega$. $\mathbb{P}$ is the law of the fractional Brownian motion. $\theta$ is a Wiener shift. So
$(\Omega,\mathcal{F},\mathbb{P},\{\theta_{t}\}_{t\in\mathbb{R}})$ is  an ergodic metric dynamical system(see Ref. \cite{MR2836654}). Secondly,  there exists a full measure set
$\Omega_1\subset\Omega$ such that the fractional Brownian motion has a version, denoted by $\omega$, which is $\gamma_1$-H\"{o}lder continuous on any interval $[-k,
k],k\in\mathbb{N}$(see Ref. \cite{MR1070361}), where $\frac{1}{2}<\gamma_1<H$, $\gamma_1+\gamma_2>1$ and $\gamma_2$ is mentioned in Corollary \ref{well-posedness-o} and Corollary
\ref{well-posedness-o-1}.
It is not diffcult to prove  that $\Omega_1$ is $\theta_{t}$-invariant(see Ref. \cite{MR3072986}). Finally, according to the law of iterated logarithm of the fractional Brownian
motion(see Ref. \cite{MR2387368}), there exists a $\theta_t$-invariant subset $\Omega_2\subset\Omega$ with full measure such that for all $\omega\in \Omega_2$,
\begin{equation}\label{sublinear g}
	\lim_{t\rightarrow \pm\infty}\frac{|\omega_t|}{|t|}=0.
\end{equation}
Thus, let $\tilde\Omega=\Omega_1\cap\Omega_2$ and $\tilde{\mathcal{F}}=\mathcal{F}\cap(\Omega_1\cap\Omega_2)$, $\tilde {\mathbb{P}}$ is  restriction of $\mathbb{P}$ on
$\tilde{\mathcal{F}}$. We still use $(\Omega,\mathcal{F},\mathbb{P},\{\theta_{t}\}_{t\in\mathbb{R}})$ represents
$(\tilde\Omega,\tilde{\mathcal{F}},\tilde{\mathbb{P}},\{\theta_{t}\}_{t\in\mathbb{R}})$, it is also an ergodic metric dynamical system.
\begin{rem}\label{cocy-int}
	Due to the definition of Young integral(see Theorem \ref{Young est}), Young integral has the following property
	$$\int_{t_1}^{t_2}Y_rd\omega_r=\int_{t_1-s}^{t_2-s}Y_{r+s}d\theta_{s}\omega_r,$$
	where $t_1,t_2,s\in\mathbb{R}$, $Y_r$ is a $\gamma_2$-H\"{o}lder continuous in Banach space $V^*$ and  $\omega\in \Omega$.
\end{rem}
\begin{thm}\label{rds}
	The solution $u(t,\omega,u_0)$ of the stochastic partial differential equations \eqref{eqn:spde2} can generate a random dynamical system $\varphi:\mathbb{R}^+\times\Omega\times
H\rightarrow H$ as follows
	\begin{equation}\nonumber
		\begin{aligned}
			\varphi(t,\omega,u_0)=u_0+\int_{0}^{t}A(u_r)dr+\int_{0}^{t}\beta u_rd\o_r=u(t,\omega,u_0).
		\end{aligned}
	\end{equation}
	The solution $u(t,\tau,\omega,u_\tau)$ of the stochastic partial differential equations \eqref{eqn:spde3} can generate a continuous cocycle
$\Phi:\mathbb{R}^+\times\mathbb{R}\times\Omega\times H\rightarrow H$ as follows
	\begin{equation}\nonumber
		\begin{aligned}
			\Phi(t,\tau,\omega,u_0)&=u_\tau+\int_{\tau}^{t+\tau}A(r,u_r)dr+\int_{\tau}^{t+\tau}\beta u_rd\theta_{-\tau}\o_r\\
			&=u(t+\tau,\tau,\theta_{-\tau}\omega,u_\tau).
		\end{aligned}
	\end{equation}
\end{thm}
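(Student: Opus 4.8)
The plan is to reduce both claims, via the conjugation $\tilde u(t)=z_t(\o)u(t)$ from Section~\ref{sec:setup}, to the well-posedness of the random PDEs \eqref{random PDE} and \eqref{random PDE1} already obtained in Theorems~\ref{wellposed-rpde}--\ref{wellposed-rpde1}, and then to check one by one the defining properties of a random dynamical system, respectively a continuous cocycle (see Appendix~\ref{app:rds}): the initial-value normalization, the measurability and continuity requirements, and the cocycle identity. The normalization is immediate: evaluating the variational formula of Definition~\ref{def:soln_pathw} at $t=0$ (resp. $t=\tau$) gives $\<\varphi(0,\o,u_0),v\>_H=\<u_0,v\>_H$ for all $v\in V$, so $\varphi(0,\o,\cdot)=\mathrm{id}_H$ (resp. $\Phi(0,\tau,\o,\cdot)=\mathrm{id}_H$). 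Continuity of $t\mapsto\varphi(t,\o,u_0)$ in $H$ is built into the notion of a variational solution, and continuity in $u_0$ uniformly on $[0,T]$ is precisely the estimate of Theorem~\ref{dependece-1} (resp.~\ref{dependece-2}); from these two facts I would deduce joint continuity of $(t,u_0)\mapsto\varphi(t,\o,u_0)$ in the standard way.

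For the measurability of $(t,\o,u_0)\mapsto u(t,\o,u_0)$, I would use that, by compactness of $V\subseteq H$, the solution $\tilde u(\cdot,\o,u_0)$ of \eqref{random PDE} is constructed (as in the proof of Theorem~\ref{thm:var_ex}, i.e.\ Theorem~1.1 of \cite{MR2833734}) as a limit in $C([0,T];H)\cap L^\a([0,T];V)$ of finite-dimensional Galerkin approximations $\tilde u^{(n)}$. Each $\tilde u^{(n)}$ solves an ODE system whose coefficients are assembled from $(t,\o,v)\mapsto z_t(\o)A(z_t^{-1}(\o)v)$, and since $\o\mapsto z_{\cdot}(\o)=e^{-\b\o_{\cdot}}$ is continuous from $\Omega$ (compact-open topology) into $C([0,T];\R)$, the maps $(t,\o,u_0)\mapsto\tilde u^{(n)}(t,\o,u_0)$ are jointly measurable; under the uniqueness condition \eqref{c3*} the whole sequence converges, so $\tilde u$ — and hence $u=z^{-1}\tilde u$ — is jointly measurable (in the absence of \eqref{c3*} one would invoke a measurable selection theorem). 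The same reasoning applies to \eqref{random PDE1}.

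The core step will be the cocycle identity $\varphi(t+s,\o,u_0)=\varphi(t,\t_s\o,\varphi(s,\o,u_0))$ for $s,t\ge0$. I would fix $s\ge0$, set $v(\cdot):=u(\cdot+s,\o,u_0)$, and subtract the variational identity of Definition~\ref{def:soln_pathw} at time $s$ from the one at time $t+s$, obtaining, for every $w\in V$,
\[
\<v(t),w\>_H=\<u(s,\o,u_0),w\>_H+\int_s^{t+s}{}_{V^*}\<A(u(r,\o,u_0)),w\>_V\,dr+\int_s^{t+s}\b\<u(r,\o,u_0),w\>_H\,d\o_r.
\]
Substituting $r=r'+s$ in the Lebesgue integral and applying the shift property of the Young integral from Remark~\ref{cocy-int} to the last integral turns this into the variational formulation of \eqref{eqn:spde2} on $[0,T]$ driven by $\t_s\o$ with initial datum $v(0)=u(s,\o,u_0)$; since $\t_s\o\in C^{\g_1}$ and $\<v(\cdot),w\>_H$ retains the $\g_2$-H\"{o}lder regularity, $v$ is an admissible variational solution in the sense of Lemma~\ref{equvilence}, and uniqueness (Theorem~\ref{dependece-1}) then forces $v(t)=u(t,\t_s\o,u(s,\o,u_0))$, which is the claimed identity.

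The nonautonomous case goes the same way: normalization, measurability and continuity come from Theorems~\ref{wellposed-rpde1} and~\ref{dependece-2} together with Lemma~\ref{equvilence1}, and the identity $\Phi(t+s,\tau,\o,\cdot)=\Phi(t,\tau+s,\t_s\o,\cdot)\circ\Phi(s,\tau,\o,\cdot)$, once $\Phi(t,\tau,\o,u_\tau)=u(t+\tau,\tau,\t_{-\tau}\o,u_\tau)$ is substituted, reduces to the evolution property $u(t+s+\tau,\tau,\t_{-\tau}\o,u_\tau)=u(t+s+\tau,\tau+s,\t_{-\tau}\o,u(s+\tau,\tau,\t_{-\tau}\o,u_\tau))$ of the nonautonomous solution — the driving path $\t_{-\tau}\o$ being frozen, no shift of $\o$ enters — which is again a consequence of uniqueness. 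I expect the delicate point to be the joint measurability in $(t,\o,u_0)$: one must make sure that the Galerkin/pseudo-monotone compactness argument behind Theorem~\ref{thm:var_ex} can be carried out measurably in the sample parameter rather than just for each frozen $\o$; once that is settled, the remaining verifications are bookkeeping built on the well-posedness and continuous-dependence results already in place.
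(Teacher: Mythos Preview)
Your proposal is correct and follows essentially the same route as the paper: the initial-value normalization is immediate, continuity in $t$ and $u_0$ comes from Corollary~\ref{well-posedness-o} and Theorem~\ref{dependece-1} (resp.\ their nonautonomous counterparts), the cocycle identity is established by shifting the integration variable and invoking the Young-integral shift of Remark~\ref{cocy-int} followed by uniqueness, and the $\o$-measurability is reduced via the conjugation $u=z^{-1}\tilde u$ to that of the random PDE solution. The only cosmetic difference is that the paper cites \cite[Theorem~1.4]{MR2812588} and \cite[Ch.~3]{MR0467310} for the measurability step rather than spelling out the Galerkin argument, and in the nonautonomous case it simply declares the proof analogous rather than making explicit, as you do, that $\theta_{-(\tau+s)}\theta_s\o=\theta_{-\tau}\o$ so that the cocycle identity collapses to the evolution property for a frozen driving path.
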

\begin{proof}
	It is clear that $\varphi(0,\omega,u_0)=u_0$. Next, we check the cocycle property. For any $s,t\in\mathbb{R}^{+}$, according to Remark \ref{cocy-int}, we have
	\begin{equation}\nonumber
		\begin{aligned}
			\varphi(t+s,\omega,u_0)&=u_0+\int_{0}^{t+s}A(u_r)dr+\int_{0}^{t+s}\beta u_rd\o_r\\
			&=u_0+\int_{0}^{t}A(u_r)dr+\int_{0}^{t}\beta u_rd\o_r\\
			&\quad\quad+\int_{t}^{t+s}A(u_r)dr+\int_{t}^{t+s}\beta u_rd\o_r\\
			&=\varphi(t,\omega,u_0)+\int_{0}^{s}A(u_{r+t})dr+\int_{0}^{s}\beta u_{r+t}d\theta_{t}\omega_r,
		\end{aligned}
	\end{equation}
	Note that this identity holds on $V^*$. Using the uniqueness of the solution of  the equation \eqref{eqn:spde2}, the above identity shows that
$\varphi(t+s,\omega,u_0)=\varphi(s,\theta_{t}\omega,\varphi(t,\omega,u_0))$. Finally, we begin to check the measurability of $\varphi:\mathbb{R}^+\times\Omega\times H\mapsto H$. Due
to  Corollary \ref{well-posedness-o} and Theorem \ref{dependece-1}, the maps $t\mapsto \varphi(t,\omega,u_0)$ and  $u_0\mapsto \varphi(t,\omega,u_0)$ are continuous. The
measurability  of  the map $\omega\mapsto\varphi(t,\omega,u_0)$ is classical. Indeed, since $\varphi(t,\omega,u_0)=u(t,\omega,u_0)=e^{\beta\omega_t}v(t,\omega,u_0)$, it suffices to
prove the  measurability of the map $\omega\mapsto v(t,\omega,u_0)$, its proof is same as Theorem 1.4 in Ref. \cite{MR2812588}, so we omit its proof. Therefore, we know that mapping
$\varphi:\mathcal{B}(\mathbb{R}^+)\times\mathcal{F}\times\mathbb{R}^m\mapsto \mathbb{R}^m$ is measurable by Ch.3 in Ref. \cite{MR0467310}.  Then the solution of equation
\eqref{eqn:spde2} can generate a continuous random dynamical system $\varphi$. The nonautonomous case can be proved similarly, so we omit its proof.
\end{proof}
\subsection{Autonomous case}
Now, we can consider its random attractor with respect to random dynamical system $\varphi$. The first step to construct random attractors is to find a bounded absorption in $H$. To
this end, we first give  a prior estimate for the solution $u(t,\omega)$.
\begin{lem}
	\label{prop:bdd_ab} Assume that $(A1)$-$(A4)$ and \eqref{c3*} hold. The embedding $V\subseteq H$  is compact. If $\a=2$, additionally assume $K< \gamma\lambda$ in
	$(A3)$. Then there is a random bounded absorbing set $\{F(\o)\}_{\o\in\Omega}$ for $\varphi(t,\omega,\cdot)$.
	More precisely, there exits a measurable function $R:\Omega\to\R_{+}\setminus\{0\}$ such that for all $D\in\mcD$ there is an absorption time $T_{0}=T_{0}(D;\o)$
	such that
	\begin{equation}
		\varphi(t,\omega, D(\theta_{-t}\o))\subseteq B(0,R(\o)),\quad\forall t\ge T_{0},\o\in\Omega.\label{eqn:abs_by_ball}
	\end{equation}
\end{lem}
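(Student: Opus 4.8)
The plan is to work with the transformed random PDE \eqref{random PDE} rather than \eqref{eqn:spde2} directly, since the estimates are cleaner there, and then transfer the absorbing bound back to $u$ via $u(t)=z_t^{-1}(\omega)\tilde u(t)=e^{\beta\omega_t}\tilde u(t)$. First I would fix $\omega\in\Omega$ and test the equation \eqref{random PDE} against $\tilde u(t)$ itself, i.e.\ compute $\frac{d}{dt}\|\tilde u(t)\|_H^2 = 2\,{}_{V^*}\<z_t(\omega)A(z_t^{-1}(\omega)\tilde u(t)),\tilde u(t)\>_V = 2e^{-\beta\omega_t}\,{}_{V^*}\<A(z_t^{-1}(\omega)\tilde u(t)),z_t^{-1}(\omega)\tilde u(t)\>_V\cdot e^{2\beta\omega_t}$. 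Applying the coercivity condition $(A3)$ to the inner pairing gives
\[
\frac{d}{dt}\|\tilde u(t)\|_H^2 \le -\gamma e^{(\alpha-2)\beta\omega_t}\|\tilde u(t)\|_V^\alpha + K\|\tilde u(t)\|_H^2 + Ce^{-2\beta\omega_t}.
\]
Using $\lambda\|\tilde u\|_H^2\le\|\tilde u\|_V^2$ and, when $\alpha>2$, Young's inequality to absorb the $K\|\tilde u\|_H^2$ term into the $\|\tilde u\|_V^\alpha$ term (this is where the extra hypothesis $K<\gamma\lambda$ is needed only in the borderline case $\alpha=2$, exactly as stated), I obtain a differential inequality of the form $\frac{d}{dt}\|\tilde u(t)\|_H^2 \le -\kappa(\omega,t)\|\tilde u(t)\|_H^2 + h(\omega,t)$ where $\kappa(\omega,t)\ge c\,e^{(\alpha-2)\beta\omega_t}>0$ and $h(\omega,t)$ is built from $e^{-2\beta\omega_t}$ (and, for $\alpha>2$, a power of $e^{\beta\omega_t}$ coming from the Young step).

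Next I would run Gronwall's lemma on this inequality over $[-t,0]$ after the standard shift, replacing $\omega$ by $\theta_{-t}\omega$: writing $\varphi(t,\theta_{-t}\omega,u_0) = e^{\beta\omega_0\circ\theta_{-t}}\tilde u(t,\theta_{-t}\omega,\cdot)$ and noting $\omega_0=0$, one gets
\[
\|\tilde u(t,\theta_{-t}\omega,\tilde u_0)\|_H^2 \le \|\tilde u_0\|_H^2\, e^{-\int_{-t}^0\kappa(\theta_s\omega,0)\,ds} + \int_{-t}^0 h(\theta_s\omega,0)\,e^{-\int_s^0\kappa(\theta_r\omega,0)\,dr}\,ds .
\]
The decisive point is that the sublinear growth \eqref{sublinear g}, $\lim_{s\to\pm\infty}|\omega_s|/|s|=0$, forces $\int_{-t}^0 e^{(\alpha-2)\beta\omega_s}\,ds$ to grow genuinely linearly in $t$ (more precisely, $\frac1t\int_{-t}^0 e^{(\alpha-2)\beta\omega_s}ds$ stays bounded below by a positive constant for $t$ large, and $e^{\pm 2\beta\omega_s}$ grows subexponentially). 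Hence the first term tends to $0$ and the improper integral in the second term converges to a finite, $\omega$-dependent quantity; this defines the radius $R(\omega)^2 := 1 + \int_{-\infty}^0 h(\theta_s\omega,0)\,e^{-\int_s^0\kappa(\theta_r\omega,0)\,dr}\,ds$, which is finite and measurable in $\omega$. Because $\mcD$ consists of tempered families, for any $D\in\mcD$ the term $\|\tilde u_0\|_H^2 = \|z_{-t}(\theta_{-t}\omega)^{-1}\cdot z_{-t}(\theta_{-t}\omega)\,u_0\|_H^2$ with $u_0\in D(\theta_{-t}\omega)$ is controlled by $e^{\text{(subexponential)}}\cdot(\text{subexponential radius of }D)$, which is beaten by the exponentially decaying factor $e^{-\int_{-t}^0\kappa\,dr}$; so there is an absorption time $T_0=T_0(D,\omega)$ after which $\|\tilde u(t,\theta_{-t}\omega,\tilde u_0)\|_H^2\le R(\omega)^2$, and then $\|\varphi(t,\theta_{-t}\omega,u_0)\|_H = \|u(t,\theta_{-t}\omega,u_0)\|_H \le e^{|\beta|\,|\omega_0|}\,R(\omega) = R(\omega)$, giving \eqref{eqn:abs_by_ball} with $F(\omega) = B(0,R(\omega))$.

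The main obstacle I anticipate is making the Gronwall bookkeeping with the random, time-dependent coefficients rigorous: one must verify that the exponential weight $e^{-\int_s^0\kappa(\theta_r\omega,0)dr}$ decays fast enough (linear exponent) to kill both the growing initial-data term for a tempered universe $\mcD$ and the subexponentially growing integrand $h$, which is precisely where \eqref{sublinear g} enters and must be invoked carefully (uniformly in the tail, not just pointwise). A secondary technical point is that the a priori differential inequality should really be derived in the sense of the variational solution (using the Itô/Lions chain rule for $\|\tilde u\|_H^2$ that comes with Theorem \ref{thm:var_ex}, since $\tilde u'\in L^{\alpha/(\alpha-1)}([0,T];V^*)$), rather than assuming classical differentiability; and the $\alpha=2$ case needs the structural assumption $K<\gamma\lambda$ so that $-\gamma\lambda + K<0$ actually produces a decaying exponential. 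The measurability of $R$ follows from the measurability of $\omega\mapsto\varphi(t,\omega,\cdot)$ established in Theorem \ref{rds} together with the monotone/limit structure of the defining integral.
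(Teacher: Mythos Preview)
Your overall strategy---work with the transformed equation \eqref{random PDE}, derive an energy inequality via $(A3)$, apply Gronwall, and control the pullback using the sublinear growth \eqref{sublinear g}---matches the paper's proof. For $\alpha=2$ your argument is essentially identical to the paper's: both produce the constant dissipation rate $\gamma\lambda-K>0$.

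There is, however, a genuine gap in your treatment of the case $\alpha>2$. You arrive at a differential inequality with a \emph{random, time-dependent} decay coefficient $\kappa(\omega,t)\ge c\,e^{(\alpha-2)\beta\omega_t}$, and then assert that \eqref{sublinear g} forces $\tfrac1t\int_{-t}^0 e^{(\alpha-2)\beta\omega_s}\,ds$ to stay bounded below by a positive constant. This does \emph{not} follow from \eqref{sublinear g}: sublinearity only says $|\omega_s|=o(|s|)$, which still permits $\omega_s\to-\infty$ (for instance like $-|s|^{H'}$ for some $H'<1$), so that $e^{(\alpha-2)\beta\omega_s}$ can be integrable on $(-\infty,0)$ and the time average tends to zero. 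Without this lower bound you cannot conclude that $e^{-\int_{-t}^0\kappa\,ds}$ beats a tempered initial datum, nor that your defining integral for $R(\omega)^2$ converges.

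The paper avoids this difficulty by an additional Young-type step: from
\[
\lambda\|\tilde u_t\|_H^2\le\|\tilde u_t\|_V^2\le e^{(\alpha-2)\beta\omega_t}\|\tilde u_t\|_V^\alpha+C(\alpha)\,e^{-2\beta\omega_t},
\]
one first absorbs $K\|\tilde u_t\|_H^2\le\tfrac{K}{\lambda}\|\tilde u_t\|_V^2$ into $\epsilon\,e^{(\alpha-2)\beta\omega_t}\|\tilde u_t\|_V^\alpha$ plus a remainder $C\,e^{-2\beta\omega_t}$, and then uses the same inequality once more to bound $-(\gamma-\epsilon)e^{(\alpha-2)\beta\omega_t}\|\tilde u_t\|_V^\alpha\le-\lambda(\gamma-\epsilon)\|\tilde u_t\|_H^2+C\,e^{-2\beta\omega_t}$. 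This yields a \emph{constant} decay rate $\lambda(\gamma-\epsilon)$, after which only the forcing term $e^{-2\beta\omega_t}$ has to be controlled---and that is precisely what \eqref{sublinear g} gives. Inserting this manipulation into your outline closes the gap and makes the subsequent Gronwall and temperedness arguments straightforward.
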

\begin{proof}
	According to the  equation \eqref{random PDE} and $(A3)$, we have
	\begin{equation}\label{e-s}
		\begin{aligned}
			\frac{d \|\tilde{u}_t\|^2_{H}}{dt}&=2{}_{V^*}\<e^{-\beta\omega_t}A(e^{\beta\omega_{t}}\tilde{u}_t),\tilde{u}_t\>_{V}\\
			&\leq e^{-2\beta\omega_{t}}(-\gamma\|\tilde{u}_te^{\beta\omega_t}\|_{V}^{\alpha}+K\|\tilde{u}_te^{\beta\omega_t}\|_{H}^{2}+C)\\
			&= -\gamma e^{\beta(\alpha-2)\omega_t}\|\tilde{u}_t\|_{V}^{\alpha}+K\|\tilde{u}_t\|_{H}^{2}+Ce^{-2\beta\omega_t}.
		\end{aligned}
	\end{equation}
	In particular, if $\alpha=2$, we have
	\begin{equation}\label{e-s-1}
		\begin{aligned}
			\frac{d \|\tilde{u}_t\|^2_{H}}{dt}&\leq -\gamma \|\tilde{u}_t\|_{V}^{2}+K\|\tilde{u}_t\|_{H}^{2}+Ce^{-2\beta\omega_t}\\
			&\leq -\gamma\lambda \|\tilde{u}_t\|_H^2+K\|\tilde{u}_t\|_{H}^{2}+Ce^{-2\beta\omega_t},
		\end{aligned}
	\end{equation}
	where $K<\lambda\gamma$. For  the case $\alpha>2$, due to
	\begin{equation}\nonumber
		\begin{aligned}
			\lambda\|\tilde{u}_t\|_{H}^2\leq \|\tilde{u}_t\|_{V}^2\leq \|\tilde{u}_t\|_{V}^{\alpha}e^{\beta(\alpha-2)\omega_t}+C(\alpha)e^{-2\beta\omega_t},
		\end{aligned}
	\end{equation}
	then using $(A3)$ and Young's inequality, we have
	\begin{equation}\label{e-s-2}
		\begin{aligned}
			&\frac{d	 \|\tilde{u}_t\|^2_{H}}{dt}=2{}_{V^*}\<e^{-\beta\omega_t}A(e^{\beta\omega_{t}}\tilde{u}_t),\tilde{u}_t\>_{V}\\
			&\leq -\gamma e^{\beta(\alpha-2)\omega_t}\|\tilde{u}_t\|_{V}^{\alpha}+K\|\tilde{u}_t\|_{H}^{2}+Ce^{-2\beta\omega_t}\\
			&\leq -\gamma e^{\beta(\alpha-2)\omega_t} \|\tilde{u}_t\|_V^{\alpha}+\frac{K}{\lambda}\|\tilde{u}_t\|_{V}^{2}+Ce^{-2\beta\omega_t}\\
			&\leq 		 -\gamma e^{\beta(\alpha-2)\omega_t} \|\tilde{u}_t\|_V^{\alpha}\!+\!\epsilon e^{\beta(\alpha-2)\omega_t}
\|\tilde{u}_t\|_V^{\alpha}\!+\!C(\!K,\!\lambda,\alpha, \!\epsilon)e^{-2\beta\omega_t}\\
			&\leq -\lambda(\gamma-\epsilon)\|\tilde{u}_t\|_{H}^2+C(K,\lambda,\gamma,\alpha,\epsilon)e^{-2\beta\omega_{t}},
		\end{aligned}
	\end{equation}
	where $\epsilon<\gamma$. Therefore, for case $\alpha=2$, using Gr\"{o}nwall's inequality, we have
	\begin{equation}\nonumber
		\begin{aligned}
			\|\tilde{u}_t\|_{H}^2\leq e^{-(\lambda\gamma-K)t}\|u_0\|_{H}^{2}+C\int_{0}^{t}e^{-2\beta\omega_r+(\lambda\gamma-K)(r-t)}dr,
		\end{aligned}
	\end{equation}
	it means that
	\begin{equation}\nonumber
		\begin{aligned}
			\|u_t\|_{H}^2&\leq e^{-(\lambda\gamma-K)t+2\beta\omega_t}\|u_0\|_{H}^{2}+C\int_{0}^{t}e^{-2\beta(\omega_r-\omega_t)-(\lambda\gamma-K)(t-r)}dr\\
			&=e^{-(\lambda\gamma-K)t+2\beta\omega_t}\|u_0\|_{H}^{2}+C\int_{-t}^{0}e^{-2\beta(\theta_t\omega_{r})+(\lambda\gamma-K)r}dr.
		\end{aligned}
	\end{equation}
	Then, for $u_0\in D(\theta_{-t}\omega)\in\mathcal{D},t>0$,
	\begin{equation}\label{Bounde set1}
		\begin{aligned}
			\|&\varphi(t,\theta_{-t}\omega,u_0)\|^2_H=\|u(t,\theta_{-t}\omega,u_0)\|_{H}^{2}\\
			&\leq e^{-(\lambda\gamma-K)t-2\beta\omega_{-t}}\|u_0\|_{H}^{2}+C\int_{-t}^{0}e^{-2\beta\omega_{r}+(\lambda\gamma-K)r}dr.\\
		\end{aligned}
	\end{equation}
	For the first term on the right hand  of \eqref{Bounde set1},  due to the sublinear growth \eqref{sublinear g} of $\omega$ and the temperedness of $u_0$,  there exists
$T_0(D,\omega)>0$, for $t>T_0(\omega)$, such that $-2\beta\omega_{-t}\leq \frac{\lambda\gamma-K}{4}t,\|u_0\|_{H}^2\leq e^{\frac{\lambda\gamma-K}{4}t}$. So
	\begin{equation}\label{Bounde set1-1}
		\begin{aligned}
			e^{-(\lambda\gamma-K)t-2\beta\omega_{-t}}\|u_0\|_{H}^{2}<1.
		\end{aligned}
	\end{equation}
	For the second term on the right hand  of \eqref{Bounde set1},  due to the sublinear growth \eqref{sublinear g}  of $\omega$, there exists $T_1(\omega)>0$ such that
$-2\beta\omega_r\leq \frac{-(\lambda\gamma-K)r}{2},r\leq -T_1(\omega)$. So we have
	\begin{equation}\label{Bounde set1-2}
		\begin{aligned}
			\int_{-t}^{0}e^{-2\beta\omega_{r}+(\lambda\gamma-K)r}dr&\leq \int_{-\infty}^{0}e^{-2\beta\omega_{r}+(\lambda\gamma-K)r}dr\\
			&=\int_{-T_1(\omega)}^{0}e^{-2\beta\omega_{r}+(\lambda\gamma-K)r}dr+\int_{-\infty}^{-T_1(\omega)}e^{-2\beta\omega_{r}+(\lambda\gamma-K)r}dr\\
			&=\int_{-T_1(\omega)}^{0}e^{-2\beta\omega_{r}+(\lambda\gamma-K)r}dr+\int_{-\infty}^{-T_1(\omega)}e^{\frac{(\lambda\gamma-K)r}{2}}dr<\infty.
		\end{aligned}
	\end{equation}
	By \eqref{Bounde set1}-\eqref{Bounde set1-2}, there exists $\tilde{T}_0(D,\omega)=\max\{T_0(D,\omega),T_1(\omega)\}>0$, such that
	\begin{equation}\label{bounded absorbing set-1}
		\begin{aligned}
			\|\varphi(t,\theta_{-t}\omega,u_0)\|^2_H\leq 1+C\int_{-\infty}^{0}e^{-2\beta\omega_{r}+(\lambda\gamma-K)r}dr=:R^2(\omega),\quad t\geq \tilde{T}_0(\omega).
		\end{aligned}
	\end{equation}
	Similarly, for $\alpha>2$, $u_0\in D(\theta_{-t}\omega)\in\mathcal{D},t>0$, according to \eqref{e-s-2},  there exists $\tilde{\tilde{T}}_0(D,\omega)>0$, such that
	\begin{equation}\label{bounded absorbing set-2}
		\begin{aligned}
			\|\varphi(t,\theta_{-t}\omega,u_0)\|^2_H&\leq\! 1+\\
			&\!C(K,\lambda,\gamma,\alpha,\epsilon)\int_{-\infty}^{0}\!e^{-2\beta\omega_{r}+\lambda(\gamma-\epsilon)r}dr=:R^2(\omega), t\geq \tilde{\tilde{T}}_0(\omega).\\
		\end{aligned}
	\end{equation}
	By \eqref{bounded absorbing set-1} and \eqref{bounded absorbing set-2}, the random dynamical system $\varphi$ has a bounded absorbing set. Note that $R(\theta_t\omega)$ is
tempered. Indeed, for the case $\alpha=2$ and any $\eta>0$, we have
	\begin{align*}
		\lim_{t\rightarrow-\infty}e^{\eta t}R(\theta_{-t}\omega)\leq \lim_{t\rightarrow-\infty}e^{\eta t}R^2(\theta_{-t}\omega)=\lim_{t\rightarrow-\infty}Ce^{\eta
t}\int_{-\infty}^{0}e^{-2\beta\theta_t\omega_{r}+(\lambda\gamma-K)r}dr,
	\end{align*}
	let $\eta^\prime<(\lambda\gamma-K)\wedge \eta$, using the sublinear growth \eqref{sublinear g} of $\o$, there exists a $T_0(\omega)>0$ such that $-2\beta\theta_{t}\omega_r\leq
-\eta^\prime(t+r)$ for $t\leq -T_0(\omega)$ and $r<0$, then
	\begin{align*}
		&\lim_{t\rightarrow-\infty}Ce^{\eta t}\int_{-\infty}^{0}e^{-2\beta\theta_t\omega_{r}+(\lambda\gamma-K)r}dr\\
		&=\lim_{t\rightarrow-\infty}Ce^{\eta t}\int_{-T_0(\omega)}^{0}e^{-2\beta\theta_t\omega_{r}+(\lambda\gamma-K)r}dr+\lim_{t\rightarrow-\infty}Ce^{\eta
t}\int_{-\infty}^{-T_0(\omega)}e^{-2\beta\theta_t\omega_{r}+(\lambda\gamma-K)r}dr\\
		&\leq \lim_{t\rightarrow-\infty}Ce^{\eta t}\int_{-T_0(\omega)}^{0}e^{-2\beta\theta_t\omega_{r}+(\lambda\gamma-K)r}dr+\lim_{t\rightarrow-\infty}Ce^{\eta
t}\int_{-\infty}^{-T_0(\omega)}e^{-\eta^\prime(t+r)+(\lambda\gamma-K)r}dr\\
	\end{align*}
	For the first term  on the right side of the above inequality, $-2\beta\theta_{t}\omega_r\leq -\eta^\prime(t+r)$ for $t\leq -T_0(\omega)$, thus
	$$\lim_{t\rightarrow-\infty}Ce^{\eta t}\int_{-T_0(\omega)}^{0}e^{-2\beta\theta_t\omega_{r}+(\lambda\gamma-K)r}dr=0.$$
	It is clearly that
	$$\lim_{t\rightarrow-\infty}Ce^{\eta t}\int_{-\infty}^{-T_0(\omega)}e^{-\eta^\prime(t+r)+(\lambda\gamma-K)r}dr=0.$$
	That means that $R(\theta_{t}\omega)$ is tempered for case $\alpha=2$, we can get the same result for the case $\alpha>2$.
\end{proof}
\begin{thm}\label{thm:random attractor}
	Assume that $(A1)$-$(A4)$ and $\eqref{c3*}$ hold. The embedding $V\subseteq H$  is compact. If $\a=2$, additionally assume $K< \gamma\lambda$ in
	$(A3)$.  Then there is a unique random attractor $\mcA$ for the random dynamical system $\varphi$.
\end{thm}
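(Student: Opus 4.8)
The plan is to apply the classical criterion for random attractors recalled in Appendix \ref{app:rds}: a continuous random dynamical system that possesses a closed measurable $\mathcal D$-pullback absorbing set and is $\mathcal D$-pullback asymptotically compact has a unique $\mathcal D$-random attractor. The random dynamical system $\varphi$ generated by \eqref{eqn:spde2} is provided by Theorem \ref{rds}, and Lemma \ref{prop:bdd_ab} already supplies the closed random bounded absorbing set $F(\omega):=\overline{B(0,R(\omega))}$ with $R$ tempered (using $K<\gamma\lambda$ when $\alpha=2$). Thus the whole work reduces to proving $\mathcal D$-pullback asymptotic compactness of $\varphi$; uniqueness of the attractor is then automatic.

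To prove asymptotic compactness I would pass to the random PDE \eqref{random PDE} through $u_t=z_t^{-1}(\omega)\tilde u_t$; since $t\mapsto z_t^{-1}(\omega)$ is continuous and bounded on compact time intervals, it suffices to establish precompactness in $H$ for the transformed solutions. Fix $\omega$, take $t_n\to\infty$ and $u_0^n\in D(\theta_{-t_n}\omega)$ with $D\in\mathcal D$. Choosing a fixed window length (say $1$), once $t_n$ exceeds the absorption time the point $w_n:=\varphi(t_n-1,\theta_{-t_n}\omega,u_0^n)$ lies in the bounded set $F(\theta_{-1}\omega)$, and by the cocycle property $\varphi(t_n,\theta_{-t_n}\omega,u_0^n)=\varphi(1,\theta_{-1}\omega,w_n)$; hence it is enough to show $\varphi(1,\theta_{-1}\omega,F(\theta_{-1}\omega))$ is precompact in $H$. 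Let $\tilde u^n$ solve \eqref{random PDE} on $[0,1]$ with noise $\theta_{-1}\omega$ and initial datum $w_n$. Integrating the energy estimate \eqref{e-s} (resp.\ \eqref{e-s-2} when $\alpha>2$) over $[0,1]$ and using $\sup_n\|w_n\|_H<\infty$ gives a uniform bound for $\tilde u^n$ in $L^\alpha([0,1];V)\cap C([0,1];H)$, and then $(A4)$ (equivalently $(H4)$) gives a uniform bound for $\tfrac{d}{dt}\tilde u^n$ in $L^{\frac{\alpha}{\alpha-1}}([0,1];V^*)$.

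Since $V\subseteq H$ is compact, the Aubin--Lions lemma yields, along a subsequence, $\tilde u^n\to\tilde u$ strongly in $L^2([0,1];H)$, weakly in $L^\alpha([0,1];V)$, and $\tfrac{d}{dt}\tilde u^n\rightharpoonup\tfrac{d}{dt}\tilde u$ weakly in $L^{\frac{\alpha}{\alpha-1}}([0,1];V^*)$; in particular $\tilde u^n(1)\rightharpoonup\tilde u(1)$ weakly in $H$. The main obstacle — the step on which I expect to spend most care — is upgrading this to strong convergence of $\tilde u^n(1)$ in $H$, since Aubin--Lions controls the solutions only for a.e.\ time, not at the terminal time, and the data $w_n$ themselves converge only weakly. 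I would do this by the energy method: write the $H$-identity $\|\tilde u^n(1)\|_H^2=\|\tilde u^n(0)\|_H^2+2\int_0^1 {}_{V^*}\langle z_s(\theta_{-1}\omega)A(z_s^{-1}(\theta_{-1}\omega)\tilde u^n(s)),\tilde u^n(s)\rangle_V\,ds$, use the local monotonicity $(A2)$ in the form $(H2)$ together with \eqref{c3*} (tested against the limit process, the quantities $\rho(z_s^{-1}\tilde u^n)+\eta(z_s^{-1}\tilde u^n)$ being controlled along the strongly convergent subsequence) to pass to the limit and obtain $\limsup_n\|\tilde u^n(1)\|_H^2\le\|\tilde u(1)\|_H^2$, and combine with weak lower semicontinuity to get $\|\tilde u^n(1)\|_H\to\|\tilde u(1)\|_H$, hence strong convergence in $H$. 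Transforming back by $u_t=z_t^{-1}(\theta_{-1}\omega)\tilde u_t$ and invoking the continuous dependence of Theorem \ref{dependece-1} to handle the weak-only convergence of the data, we conclude that $\varphi$ is $\mathcal D$-pullback asymptotically compact, and the abstract theorem of Appendix \ref{app:rds} produces the unique random attractor $\mathcal A=\bigcap_{\tau\ge0}\overline{\bigcup_{t\ge\tau}\varphi(t,\theta_{-t}\omega,F(\theta_{-t}\omega))}$.
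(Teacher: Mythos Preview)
Your overall strategy coincides with the paper's: establish asymptotic compactness by showing that $\varphi(1,\theta_{-1}\omega,\cdot)$ maps the bounded absorbing set $F(\theta_{-1}\omega)$ into a precompact set of $H$, obtain uniform bounds for the transformed solutions $\tilde u^n$ in $L^\alpha([0,1];V)\cap C([0,1];H)$ and for $\tfrac{d}{dt}\tilde u^n$ in $L^{\alpha/(\alpha-1)}([0,1];V^*)$ from \eqref{e-s} and $(A4)$, and invoke Aubin--Lions to get compactness in $L^\alpha([0,1];H)$.

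The point where your argument departs from the paper --- and where it presently has a gap --- is the passage from $L^\alpha([0,1];H)$-convergence to strong convergence in $H$ at the terminal time $t=1$. Your proposed energy method starts from
\[
\|\tilde u^n(1)\|_H^2=\|w_n\|_H^2+2\int_0^1 {}_{V^*}\langle z_s A(z_s^{-1}\tilde u^n(s)),\tilde u^n(s)\rangle_V\,ds,
\]
but the data $w_n$ converge only weakly in $H$, so $\limsup_n\|w_n\|_H^2$ need not equal $\|\tilde u(0)\|_H^2$; the inequality $\limsup_n\|\tilde u^n(1)\|_H^2\le\|\tilde u(1)\|_H^2$ does not follow, and the monotonicity/\eqref{c3*} argument you sketch cannot compensate for this initial-layer loss. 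The vague appeal to Theorem~\ref{dependece-1} ``to handle the weak-only convergence of the data'' does not help either, since continuous dependence is a Lipschitz estimate with respect to \emph{strong} $H$-distance of the data.

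The paper sidesteps this difficulty by a much shorter device that you have essentially set up but not used. From $\tilde u^n\to Z_0$ in $L^\alpha([0,1];H)$ one extracts a subsequence with $\tilde u^n(r)\to Z_0(r)$ strongly in $H$ for a.e.\ $r\in[0,1]$; fix one such $r$. By the flow property and Theorem~\ref{dependece-1} (continuous dependence with initial time $r$),
\[
\tilde u^n(1)=\tilde u(1,r,\omega,\tilde u^n(r))\longrightarrow \tilde u(1,r,\omega,Z_0(r))\quad\text{strongly in }H,
\]
which is exactly the terminal-time strong convergence you need. Replacing your energy-method paragraph by this intermediate-time argument closes the gap and then your proof coincides with the paper's.
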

\begin{proof}
	In order to construct the random attractor of the random dynamical system $\varphi$, we need to find a compact absorbing set. Due to
$\varphi(t,\theta_{-t}\omega,D(\theta_{-t}\omega))=e^{\beta\theta_{-t}\omega_t}\tilde{u}(t,\theta_{-t}\omega,D(\theta_{-t}\omega))$, where $D\in\mathcal{D}$ and $t\geq 0$. Then we
need to prove $\tilde{u}(t,\omega,B)$ is a  compact set for any bounded set $B\subset H$ and $\o\in\Omega$. Namely $\tilde{u}^n\in \tilde{u}(t,\omega,B),$ i.e.
$\tilde{u}^n=\tilde{u}(t,\omega,b_n)$  for some sequence $b_n\in B\subseteq H$. We need to prove the existence of a convergent subsequence of $\tilde{u}^n$. According to \eqref{e-s},
we have
	\begin{equation}
		\begin{aligned}
			\|\tilde{u}(t,\omega,b_n)\|^2_{H}&\leq\|b_n\|_{H}^2-\gamma\int_{0}^{t} e^{\beta(\alpha-2)\omega_r}\|\tilde{u}(r,\omega,b_n)\|_{V}^{\alpha}dr\\
			&\quad+K\int_{0}^{t}\|\tilde{u}(r,\omega,b_n)\|_{H}^{2}dr+C\int_{0}^{t}e^{-2\beta\omega_r}dr.
		\end{aligned}
	\end{equation}
	Thus, there exists a $C>0$ such that $\int_{0}^{t} \|\tilde{u}(r,\omega,b_n)\|_{V}^{\alpha}dr<C$. Since $\tilde{u}(t,\omega,b_n)$ satisfy the following equation
	$$\tilde{u}(t,\omega,b_n)=b_n+\int_{0}^{t}e^{-\beta\omega_{r}}A(e^{\beta\omega_{r}}\tilde{u}(r,\omega,b_n))dr,t\geq 0.$$
	in $V^*$. Then $\frac{d \tilde{u}(t,\omega,b_n)}{dt}$ exists  in $V^*$ for any $\omega\in\Omega$ by Theorem \ref{thm:var_ex} and satisfies
	$$\frac{d \tilde{u}(t,\omega,b_n)}{dt}=e^{-\beta\omega_{t}}A(e^{\beta\omega_{t}}\tilde{u}(t,\omega,b_n)), \quad t\geq 0.$$
	Thus,
	\begin{equation}
		\begin{aligned}
			\int_{0}^{t}\left\|\frac{d
\tilde{u}(r,\omega,b_n)}{dr}\right\|^{\frac{\alpha}{\alpha-1}}_{V^*}dr&=\int_{0}^{t}\left\|e^{-\beta\omega_{r}}A(e^{\beta\omega_{r}}\tilde{u}(r,\omega,b_n))\right\|_{V^*}^{\frac{\alpha}{\alpha-1}}dr\\
			&\leq C\int_{0}^{t}\bigg[e^{\frac{-\beta\alpha\omega_r}{\alpha-1}}(1+e^{\alpha\beta\omega_r}\|\tilde{u}(r,\omega,b_n)\|_{V}^{\alpha})\\
			&\quad\times(1+e^{\beta\varpi\omega_r}\|\tilde{u}(r,\omega,b_n)\|_{H}^{\varpi})\bigg]dr\\
			&\leq C\int_{0}^{t}1+\|\tilde{u}(r,\omega,b_n)\|_{V}^{\alpha}dr<C,
		\end{aligned}
	\end{equation}
	where we use the estimate for $\|\tilde{u}(r,\omega,b_n)\|_{H}$ in Lemma \ref{prop:bdd_ab}, and it is uniformly bounded for $n$ by the boundedness of the sequence $b_n$. Then the
sequence $\tilde{u}(t,\omega,b_n)$ in the space
	\begin{equation}\nonumber
		\begin{aligned}
			&W:=\left\{v\in L^\alpha([0,t],V),\frac{d}{ds}v\in L^{\frac{\alpha}{\alpha-1}}([0,t],V^{*})\right\},\\
			&\|v\|_{W}=\|v\|_{ L^\alpha([0,t],V)}+\left\|\frac{d}{ds}v\right\|_{L^{\frac{\alpha}{\alpha-1}}([0,t],V^{*})}<\infty.
		\end{aligned}
	\end{equation}
	Since the embedding $W\subseteq L^\alpha([0,t],H)$ is compact(see Theorem 2.1 in Ref. \cite{MR769654}). So $\{\tilde{u}(t,\omega,b_n)\}$ is  compact in $L^{\alpha}([0,t],H)$, and
we can choose a subsequence of $b_n$, we still denote by $b_n$, and $Z_0\in L^{\alpha}([0,t],H)$ such that
	$$\tilde{u}(\cdot,\omega,b_n)\rightarrow Z_0\quad \textbf{in}\quad L^{\alpha}([0,t],H).$$
	Further, we can choose a subsequence of $b_n$(again denote by $b_n$) such that
	$$\tilde{u}(r,\omega,b_n)\rightarrow Z_0(r)\quad \textbf{in}\quad H,$$
	for almost every $r\in[0,t]$. By  the continuous dependence of the initial data,  then we can choose such $r\in [0,t]$ such that
	$$\tilde{u}(t,\omega,b_n)=\tilde{u}(t,0,\omega,b_n)=\tilde{u}(t,r,\omega,\cdot)\circ \tilde{u}(r,0,\omega,b_n)\rightarrow \tilde{u}(t,r,\omega,Z_0).$$
	That is to say, we find a convergent subsequence of $\tilde{u}(r,\omega,b_n)$.  Thus $\tilde{u}$ is a precompact operator on $H$ for any $t\geq 0$ and $\o\in\Omega$, it also
means that $\varphi(t,\omega,\cdot)$ is a compact operator on $H$. Since $F(\theta_{-1}\omega)$ is a bounded set in $H$. Hence, there exists a compact set $K(\omega)$:
	$$K(\omega):=\overline{\varphi(1,\theta_{-1}\omega,F(\theta_{-1}\omega))}.$$
	It is also an absorbing set, for any $D\in\mathcal{D}$, there exists $T_0(\omega,D)>0$ such that
	\begin{equation}\nonumber
		\begin{aligned}
			\varphi(t,\theta_{-t}\omega,D(\theta_{-t}\omega))&=\varphi(1,\theta_{-1}\omega,\varphi(t-1,\theta_{-t}\omega,D(\theta_{-t}\omega)))\\
			&\subseteq\varphi(1,\theta_{-1}\omega,F(\theta_{-1}\omega))\subseteq K(\omega)
		\end{aligned}
	\end{equation}
	holds for $t\geq T_0(D,\omega)$. According to Theorem \ref{attrctor-a},  the random dynamical system $\varphi$ has a unique random attractor $\mathcal{A}(\omega)$,
	$$\mathcal{A}(\omega)=\bigcap_{t\geq 0}\overline{\bigcup_{r\geq t}\varphi(r,\theta_{-r}\omega,K(\omega))}.$$
\end{proof}
\subsection{Nonautonomous case} For the nonautonomous systems \eqref{eqn:spde3}, we have the similar results as follows:
\begin{lem}
	\label{prop:bdd_ab1-n} Assume $(A1^\prime)$-$(A4^\prime)$ with function $g$ which is independent of $t$ and the function $|f|$ is exponential integrability and
$\lim_{t\rightarrow -\infty}e^{-\eta t}\int_{-\infty}^{0}e^{\eta_1 r}|f(r+t)|dr=0$ for any $\eta>0$ and $\eta_1<\lambda(c-\epsilon),\epsilon<c$. The embedding $V\subseteq H$  is
compact and \eqref{c3*-1} holds. If $\a=2$, additionally assume $g< c\lambda$ in
	$(A3^\prime)$. Then there is a random bounded $\mcD$-pullback absorbing set $\{F(\tau,\o)\}_{\tau\in\mathbb{R},\o\in\Omega}$ for $\Phi(t,\tau-t,\theta_{-t}\omega,\cdot)$.
	More precisely, there is a measurable function $R:\mathbb{R}\times\Omega\to\R_{+}\setminus\{0\}$ such that for all $D\in\mcD$ there is an absorption time $T_{0}=T_{0}(D,\tau,\o)$
	such that
	\begin{equation}
		\Phi(t,\tau-t,\theta_{-t}\omega, D(\theta_{-t}\o))\subseteq B(0,R(\tau,\o)),\quad\forall t\ge T_{0},\o\in\Omega.\label{eqn:abs_by_ball-n}
	\end{equation}
\end{lem}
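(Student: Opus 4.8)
The argument follows the pattern of Lemma~\ref{prop:bdd_ab}, run on the transformed random PDE~\eqref{random PDE1} and then pulled back through the cocycle. First I would establish an energy estimate for $\tilde u$. Testing \eqref{random PDE1} against $\tilde u_t$ and applying $(A3^\prime)$ with $v=e^{\beta\omega_t}\tilde u_t$, then multiplying by $e^{-2\beta\omega_t}$, gives
\begin{equation}\nonumber
\frac{d}{dt}\|\tilde u_t\|_H^2\le-c\,e^{\beta(\alpha-2)\omega_t}\|\tilde u_t\|_V^{\alpha}+g\|\tilde u_t\|_H^2+f(t)e^{-2\beta\omega_t}.
\end{equation}
When $\alpha=2$ the coercive term dominates $-c\lambda\|\tilde u_t\|_H^2$, and since $g<c\lambda$ this yields $\frac{d}{dt}\|\tilde u_t\|_H^2\le-(c\lambda-g)\|\tilde u_t\|_H^2+f(t)e^{-2\beta\omega_t}$. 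When $\alpha>2$ I would use $\lambda\|\tilde u_t\|_H^2\le\|\tilde u_t\|_V^2\le e^{\beta(\alpha-2)\omega_t}\|\tilde u_t\|_V^{\alpha}+C(\alpha)e^{-2\beta\omega_t}$ together with Young's inequality to absorb $\tfrac{g}{\lambda}\|\tilde u_t\|_V^2$ into $\epsilon\,e^{\beta(\alpha-2)\omega_t}\|\tilde u_t\|_V^{\alpha}$ with $\epsilon<c$, exactly as in \eqref{e-s-2}, ending with $\frac{d}{dt}\|\tilde u_t\|_H^2\le-\lambda(c-\epsilon)\|\tilde u_t\|_H^2+C(1+|f(t)|)e^{-2\beta\omega_t}$. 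In both cases we obtain a scalar inequality $\frac{d}{dt}\|\tilde u_t\|_H^2\le-\mu\|\tilde u_t\|_H^2+h(t)$ with $\mu>0$ and $h(t)=C(1+|f(t)|)e^{-2\beta\omega_t}$; in particular no blow-up occurs, consistent with Theorem~\ref{wellposed-rpde1}.

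Next I would pull back via Gronwall. By Lemma~\ref{equvilence1} and the cocycle representation in Theorem~\ref{rds}, $\Phi(t,\tau-t,\theta_{-t}\omega,u_0)=e^{\beta(\theta_{-\tau}\omega)_\tau}\,\tilde u\big(\tau,\tau-t,\theta_{-\tau}\omega,e^{-\beta(\theta_{-\tau}\omega)_{\tau-t}}u_0\big)$; integrating the inequality of the previous step for the noise $\theta_{-\tau}\omega$ over $[\tau-t,\tau]$, multiplying back by $e^{2\beta(\theta_{-\tau}\omega)_\tau}$, and collapsing the boundary exponentials by the definition of the Wiener shift (with Remark~\ref{cocy-int} handling the base point of the drift integral), I expect
\begin{equation}\nonumber
\|\Phi(t,\tau-t,\theta_{-t}\omega,u_0)\|_H^2\le e^{-\mu t-2\beta\omega_{-t}}\|u_0\|_H^2+C\int_{-t}^{0}e^{\mu r}\big(1+|f(r+\tau)|\big)e^{-2\beta\omega_r}\,dr.
\end{equation}
The $\tau$-dependence enters only through the shifted datum $f(\cdot+\tau)$, which is why $R$ is a function of $\tau$.

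Finally I would read off absorption and temperedness. For $u_0\in D(\theta_{-t}\omega)$ with $D\in\mcD$, the first term tends to $0$ as $t\to\infty$: by the sublinear growth~\eqref{sublinear g}, $-2\beta\omega_{-t}=o(t)$, and temperedness of $D$ forces $\|u_0\|_H^2$ to grow subexponentially, so $e^{-\mu t-2\beta\omega_{-t}}\|u_0\|_H^2\to0$; this produces the absorption time $T_0(D,\tau,\omega)$. For the second term I would split $\int_{-t}^{0}=\int_{-T_1}^{0}+\int_{-t}^{-T_1}$: the finite piece is controlled by $f\in L^{1}_{\mathrm{loc}}$ and continuity of $\omega$, while on the tail \eqref{sublinear g} gives $-2\beta\omega_r\le\tfrac{\mu}{2}|r|$, so the exponential integrability of $|f|$ makes the integral converge, as $t\to\infty$, to a finite limit; setting
\begin{equation}\nonumber
R^2(\tau,\omega):=1+C\int_{-\infty}^{0}e^{\mu r}\big(1+|f(r+\tau)|\big)e^{-2\beta\omega_r}\,dr
\end{equation}
gives \eqref{eqn:abs_by_ball-n}, and measurability in $\omega$ is as in Theorem~\ref{rds}. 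That $R(\tau,\theta_t\omega)$ is tempered follows by repeating the estimate in Lemma~\ref{prop:bdd_ab} for the part of the integral coming from $1$, while the part coming from $|f|$ is precisely governed by the hypothesis $\lim_{t\to-\infty}e^{-\eta t}\int_{-\infty}^{0}e^{\eta_1 r}|f(r+t)|\,dr=0$ with $\eta_1<\lambda(c-\epsilon)$. The routine parts are the energy estimate and the Gronwall step; the main obstacle is the bookkeeping of the several time shifts in passing between $\tilde u$, $u$, and $\Phi$ so that the absorbing integral ends up over $(-\infty,0]$ with the correct shifted data, together with checking that the conditions imposed on $f$ are simultaneously what is needed both for finiteness of $R(\tau,\omega)$ and for its temperedness.
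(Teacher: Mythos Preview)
Your proposal is correct and follows essentially the same route as the paper: the energy estimate from $(A3^\prime)$ on the transformed equation~\eqref{random PDE1}, the split into $\alpha=2$ and $\alpha>2$ via Young's inequality, Gr\"{o}nwall, the pullback through the cocycle using~\eqref{sublinear g} and the exponential integrability of $|f|$ to control the integral, and finally the extra hypothesis on $f$ to get temperedness. The only cosmetic differences are that the paper keeps the two cases separate (defining $R^2(\tau,\omega)$ with $|f(r+\tau)|$ alone when $\alpha=2$ and with $C+|f(r+\tau)|$ when $\alpha>2$) and checks temperedness of $R(\tau+t,\theta_t\omega)$ rather than $R(\tau,\theta_t\omega)$, i.e.\ shifting both arguments simultaneously as required by the nonautonomous notion of temperedness.
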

\begin{proof}
	According to the  equation \eqref{random PDE1} and $(A3^\prime)$, we have
	\begin{equation}\label{e-s-n}
		\begin{aligned}
			\frac{d \|\tilde{u}_t\|^2_{H}}{dt}\leq -c e^{\beta(\alpha-2)\omega_t}\|\tilde{u}_t\|_{V}^{\alpha}+g\|\tilde{u}_t\|_{H}^{2}+f(t)e^{-2\beta\omega_t}.
		\end{aligned}
	\end{equation}
	In particular, if $\alpha=2$, we have
	\begin{equation}\label{e-s-1-n}
		\begin{aligned}
			\frac{d \|\tilde{u}_t\|^2_{H}}{dt}\leq -c\lambda \|\tilde{u}_t\|_H^2+g\|\tilde{u}_t\|_{H}^{2}+f(t)e^{-2\beta\omega_t},
		\end{aligned}
	\end{equation}
	where $g<c\lambda$. For  the case $\alpha>2$, due to
	\begin{equation}\nonumber
		\begin{aligned}
			\lambda\|\tilde{u}_t\|_{H}^2\leq \|\tilde{u}_t\|_{V}^2\leq \|\tilde{u}_t\|_{V}^{\alpha}e^{\beta(\alpha-2)\omega_t}+C(\alpha)e^{-2\beta\omega_t},
		\end{aligned}
	\end{equation}
	then using $(A3^\prime)$ and Young's inequality, we have
	\begin{equation}\label{e-s-2-n}
		\begin{aligned}
			&\frac{d	 \|\tilde{u}_t\|^2_{H}}{dt}=2{}_{V^*}\<e^{-\beta\omega_t}A(t,e^{\beta\omega_{t}}\tilde{u}_t),\tilde{u}_t\>_{V}\\
			&\leq -c e^{\beta(\alpha-2)\omega_t}\|\tilde{u}_t\|_{V}^{\alpha}+g\|\tilde{u}_t\|_{H}^{2}+f(t)e^{-2\beta\omega_t}\\
			&\leq -c e^{\beta(\alpha-2)\omega_t} \|\tilde{u}_t\|_V^{\alpha}+\frac{g}{\lambda}\|\tilde{u}_t\|_{V}^{2}+f(t)e^{-2\beta\omega_t}\\
			&\leq 		 -ce^{\beta(\alpha-2)\omega_t} \|\tilde{u}_t\|_V^{\alpha}\!+\!\epsilon e^{\beta(\alpha-2)\omega_t}
\|\tilde{u}_t\|_V^{\alpha}\!+(\!C(\!g,\!\lambda,\alpha,\!\epsilon)+f(t))e^{-2\beta\omega_t}\\
			&\leq -\lambda(c-\epsilon)\|\tilde{u}_t\|_{H}^2+(C(g,\lambda,c,\alpha,\epsilon)+f(t))e^{-2\beta\omega_{t}},
		\end{aligned}
	\end{equation}
	where $\epsilon<c$.	Therefore, for case $\alpha=2$, by Gr\"{o}nwall's inequality, we have
	\begin{equation}\nonumber
		\begin{aligned}
			\|\tilde{u}_t\|_{H}^2\leq e^{-(c\lambda-g)(t-\tau)}\|\tilde u_\tau\|_{H}^{2}+\int_{\tau}^{t}e^{-2\beta\omega_r+(c\lambda-g)(r-t)}f(r)dr,
		\end{aligned}
	\end{equation}
	it means that
	\begin{equation}\nonumber
		\begin{aligned}
			\|u_t\|_{H}^2&\leq e^{-(c\lambda-g)(t-\tau)+2\beta(\omega_t-\omega_\tau)}\|u_\tau\|_{H}^{2}+\int_{\tau}^{t}e^{-2\beta(\omega_r-\omega_t)-(c\lambda-g)(t-r)}f(r)dr\\
			&=e^{-(c\lambda-g)(t-\tau)+2\beta(\omega_t-\omega_\tau)}\|u_\tau\|_{H}^{2}+\int_{\tau-t}^{0}e^{-2\beta(\theta_t\omega_{r})+(c\lambda-g)r}f(r+t)dr.
		\end{aligned}
	\end{equation}
	Then, for $u_{\tau-t}\in D(\tau-t,\theta_{-t}\omega)\in\mathcal{D},t>0$, we have
	\begin{equation}\label{Bounde set1-n}
		\begin{aligned}
			\|&\Phi(t,\tau-t,\theta_{-t}\omega,u_{\tau-t})\|^2_H=\|u(\tau,\tau-t,\theta_{-\tau}\omega,u_{\tau-t})\|_{H}^2\\
			&\leq e^{-(c\lambda-g)t-2\beta\omega_{-t}}\|u_{\tau-t}\|_{H}^{2}+\int_{-t}^{0}e^{-2\beta\omega_{r}+(c\lambda-g)r}f(r+\tau)dr.\\
		\end{aligned}
	\end{equation}
	For the first term on the right hand  of \eqref{Bounde set1-n},  due to the sublinear growth \eqref{sublinear g} of $\omega$ and the temperedness of $u_{\tau-t}$,  there exists
$T_0(D,\omega)>0$, for $t>T_0(\omega)$, such that $-2\beta\omega_{-t}\leq \frac{c\lambda-g}{4}t,\|u_{\tau-t}\|_{H}^2\leq e^{\frac{c\lambda-g}{4}t}$. So
	\begin{equation}\label{Bounde set1-1-n}
		\begin{aligned}
			e^{-(c\lambda-g)t-2\beta\omega_{-t}}\|u_{\tau-t}\|_{H}^{2}<1.
		\end{aligned}
	\end{equation}
	For the second term on the right hand  of \eqref{Bounde set1-n},  due to  the sublinear growth \eqref{sublinear g} of $\omega$, there exists $T_1(\omega)>0$ such that
$-2\beta\omega_r\leq \frac{-(c\lambda-g)r}{2},r\leq -T_1(\omega)$. So we have
	\begin{equation}\label{Bounde set1-2-n}
		\begin{aligned}
			\int_{-t}^{0}&e^{-2\beta\omega_{r}+(c\lambda-g)r}f(r+\tau)dr
			\leq\! \int_{-\infty}^{0}e^{-2\beta\omega_{r}+(c\lambda-g)r}|f(r+\tau)|dr\\
			&\!=\!\int_{-T_1(\omega)}^{0}e^{-2\beta\omega_{r}+(c\lambda-g)r}|f(r+\tau)|dr+\int_{-\infty}^{-T_1(\omega)}e^{-2\beta\omega_{r}+(c\lambda-g)r}|f(r+\tau)|dr\\
			&=\int_{-T_1(\omega)}^{0}e^{-2\beta\omega_{r}+(c\lambda-g)r}|f(r+\tau)|dr+\int_{-\infty}^{-T_1(\omega)}e^{\frac{(c\lambda-g)r}{2}}|f(r+\tau)|dr<\infty,
		\end{aligned}
	\end{equation}
	where we use $|f|$ is exponential integrability to the last integral. By \eqref{Bounde set1-n}-\eqref{Bounde set1-2-n}, there exists
$\tilde{T}_0(D,\omega)=\max\{T_0(D,\omega),T_1(\omega)\}>0$, such that
	\begin{equation}\label{bounded absorbing set-1-n}
		\begin{aligned}
			\|&\Phi(t,\tau-t,\theta_{-t}\omega,u_{\tau-t})\|^2_H\leq 1\\
			&+\int_{-\infty}^{0}e^{-2\beta\omega_{r}+(c\lambda-g)r}|f(r+\tau)|dr=:R^2(\tau,\omega),\quad t\geq \tilde{T}_0(\omega).
		\end{aligned}
	\end{equation}
	Similarly, for $\alpha>2$, $u_{\tau-t}\in D(\tau-t,\theta_{-t}\omega)\in\mathcal{D},t>0$, according to \eqref{e-s-2-n},  there exists $\tilde{\tilde{T}}_0(D,\omega)>0$, such that
	\begin{equation}\label{bounded absorbing set-2-n}
		\begin{aligned}
			\|&\Phi(t,\tau-t,\theta_{-t}\omega,u_{\tau-t})\|^2_H\\&\leq 1+
			\!\int_{-\infty}^{0}(C(g,\lambda,c,\alpha,\epsilon)+|f(r+\tau)|)e^{-2\beta\omega_{r}+\lambda(c-\epsilon)r}dr\\
			&=:R^2(\tau,\omega), t\geq \tilde{\tilde{T}}_0(\omega).\\
		\end{aligned}
	\end{equation}
	By \eqref{bounded absorbing set-1-n} and \eqref{bounded absorbing set-2-n}, the continuous cocycle $\Phi$ has a bounded absorbing set.  Note that $R(\tau+t,\theta_{t}\omega)$ is
tempered. Indeed, without loss of generality, we only discuss the case $\alpha>2$.  For any $\eta>0$, similar to the autonomous case, we have
	\begin{align*}
		\lim_{t\rightarrow -\infty}e^{\eta t}\int_{-\infty}^{0}C(g,\lambda,c,\alpha,\epsilon)e^{-2\beta\theta_{t}\omega_r+\lambda(c-\epsilon)r}dr=0.
	\end{align*}
	Due to the sublinear growth \eqref{sublinear g} of $\omega$, there exists a  $T_0(\omega)>0$ such that $-2\beta\theta_t\omega_r<-\eta^\prime(r+t)$ for any
$\eta^\prime<\eta\wedge(\lambda(c-\epsilon))$, $t<-T_0(\omega)$ and $r<0$, we have
	\begin{align*}
		&\lim_{t\rightarrow -\infty}e^{\eta t}\int_{-\infty}^{0}|f(r+\tau+t)|e^{-2\beta\theta_t\omega_{r}+\lambda(c-\epsilon)r}dr\\
		&=\lim_{t\rightarrow -\infty}e^{\eta t}\int_{-T_0(\omega)}^{0}|f(r+\tau+t)|e^{-2\beta\theta_t\omega_{r}+\lambda(c-\epsilon)r}dr\\
		&~~+\lim_{t\rightarrow -\infty}e^{\eta t}\int_{-\infty}^{-T_0(\omega)}|f(r+\tau+t)|e^{-\eta^\prime(r+t)+\lambda(c-\epsilon)r}dr\\
		&\leq \lim_{t\rightarrow -\infty}e^{\eta t}\int_{-T_0(\omega)}^{0}|f(r+\tau+t)|e^{-2\beta\theta_t\omega_{r}+\lambda(c-\epsilon)r}dr\\
		&~~+e^{-(\eta-\eta^\prime)\tau}\lim_{t\rightarrow -\infty}e^{(\eta-\eta^\prime) t}\int_{-\infty}^{-T_0(\omega)}|f(r+t)|e^{-\eta^\prime r+\lambda(c-\epsilon)r}dr.
	\end{align*}
	According to the assumption of this theorem, the second term on the right side of the above inequality  is equal to zero. For the first term of on the right side of the above
inequality, due to
	$$e^{-(\eta-\eta^\prime)\tau}\lim_{t\rightarrow -\infty}e^{(\eta-\eta^\prime) t}\int_{-\infty}^{0}|f(r+t)|e^{-\eta^\prime r+\lambda(c-\epsilon)r}dr=0,$$
	then, for any $\varepsilon>0$, there exists a $\delta(\varepsilon)<0$ such that
	$$e^{-(\eta-\eta^\prime)\tau}e^{(\eta-\eta^\prime) t}\int_{-\infty}^{0}|f(r+t)|e^{-\eta^\prime r+\lambda(c-\epsilon)r}dr<\varepsilon,\quad \forall t<\delta(\varepsilon).$$
	Thus, let $\delta_1(\varepsilon,\omega)=\min\{-T_0(\omega),\delta(\varepsilon)\}$, we have
	\begin{align*}
		&e^{\eta t}\int_{-T_0(\omega)}^{0}|f(r+\tau+t)|e^{-2\beta\theta_t\omega_{r}+\lambda(c-\epsilon)r}dr\\
		&\leq e^{-(\eta-\eta^\prime)\tau}e^{(\eta-\eta^\prime) t}\int_{-\infty}^{0}|f(r+t)|e^{-\eta^\prime r+\lambda(c-\epsilon)r}dr\leq \varepsilon.  \quad \forall
t<\delta_1(\varepsilon,\omega).
	\end{align*}
	Therefore, we know that
	$$
	\lim_{t\rightarrow -\infty}e^{\eta t}\int_{-\infty}^{0}|f(r+\tau+t)|e^{-2\beta\theta_t\omega_{r}+\lambda(c-\epsilon)r}dr =0.
	$$
	Then $\lim_{t\rightarrow 0}e^{\eta t}R(\tau-t,\theta_{t}\omega)\leq \lim_{t\rightarrow 0}e^{\eta t}R^2(\tau-t,\theta_{t}\omega)=0$,  i.e. $R(\tau-t,\theta_{t}\omega)$ is
tempered.
\end{proof}
\begin{thm}\label{thm:random attractor-n}
	Assume $(A1^\prime)$--$(A4^\prime)$ with function $g$ which is independent of $t$ and $|f|$ is exponential integrability and $\lim_{t\rightarrow -\infty}e^{-\eta
t}\int_{-\infty}^{0}e^{\eta_1 r}|f(r+t)|dr=0$ for any $\eta>0$ and $\eta_1<\lambda(c-\epsilon),\epsilon<c$. The embedding $V\subseteq H$  is compact and \eqref{c3*-1} holds. If
$\a=2$, additionally assume $g< c\lambda$ in
	$(A3^\prime)$. Then there is a  unique $\mcD$-pullback random attractor $\{\mathcal A(\tau,\o)\}_{\tau\in\mathbb{R},\o\in\Omega}$ for the continuous cocycle  $\Phi$.
\end{thm}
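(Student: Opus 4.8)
The plan is to carry over, essentially verbatim, the scheme used for Theorem \ref{thm:random attractor} to the pullback setting. Lemma \ref{prop:bdd_ab1-n} already supplies a tempered, measurable family $\{F(\tau,\omega)\}_{\tau\in\mathbb R,\omega\in\Omega}$ of bounded sets in $H$ that $\mathcal D$-pullback absorbs every $D\in\mathcal D$ under $\Phi(t,\tau-t,\theta_{-t}\omega,\cdot)$, so the only thing left to establish is a compactness property for $\Phi$; once that is in hand, the existence and uniqueness of the $\mathcal D$-pullback random attractor follow from the abstract criterion recalled in Appendix \ref{app:rds} (the pullback analogue of Theorem \ref{attrctor-a}).

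First I would show that for every fixed $\tau\in\mathbb R$, $t>0$ and $\omega\in\Omega$ the solution map $\tilde u(t+\tau,\tau,\omega,\cdot)$ of the random PDE \eqref{random PDE1} sends bounded subsets of $H$ into precompact subsets of $H$, repeating the argument in the proof of Theorem \ref{thm:random attractor} on the interval $[\tau,\tau+t]$. Given a bounded sequence $b_n\in B\subseteq H$, integrating the energy inequality \eqref{e-s-n} (and, when $\alpha>2$, \eqref{e-s-2-n}) over $[\tau,\tau+t]$ and using $g<c\lambda$ together with $f\in L^1([\tau,\tau+t];\mathbb R)$ yields a uniform bound $\int_\tau^{\tau+t}\|\tilde u(r,\tau,\omega,b_n)\|_V^\alpha\,dr\le C$; then the transformed growth estimate (the ``Check $(H4)$'' step in the proof of Theorem \ref{wellposed-rpde1}) gives $\int_\tau^{\tau+t}\big\|\tfrac{d}{dr}\tilde u(r,\tau,\omega,b_n)\big\|_{V^*}^{\alpha/(\alpha-1)}\,dr\le C$ uniformly in $n$. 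Hence $\{\tilde u(\cdot,\tau,\omega,b_n)\}$ is bounded in
\[
W:=\Big\{v\in L^\alpha([\tau,\tau+t];V):\tfrac{d}{ds}v\in L^{\alpha/(\alpha-1)}([\tau,\tau+t];V^*)\Big\},
\]
which embeds compactly into $L^\alpha([\tau,\tau+t];H)$ by Theorem 2.1 in \cite{MR769654}. Extracting subsequences gives $\tilde u(\cdot,\tau,\omega,b_n)\to Z_0$ in $L^\alpha([\tau,\tau+t];H)$ and $\tilde u(s,\tau,\omega,b_n)\to Z_0(s)$ in $H$ for a.e.\ $s\in(\tau,\tau+t)$; fixing such an $s$, the evolution property $\tilde u(t+\tau,\tau,\omega,b_n)=\tilde u(t+\tau,s,\omega,\cdot)\circ\tilde u(s,\tau,\omega,b_n)$ (valid by uniqueness, Theorem \ref{wellposed-rpde1}) together with the continuous dependence on the initial datum from Theorem \ref{dependece-2} yields $\tilde u(t+\tau,\tau,\omega,b_n)\to\tilde u(t+\tau,s,\omega,Z_0(s))$ in $H$. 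Since $\Phi(t,\tau,\omega,\cdot)$ differs from $\tilde u(t+\tau,\tau,\theta_{-\tau}\omega,\cdot)$ only by multiplication by the deterministic scalar $e^{\beta(\theta_{-\tau}\omega)_{t+\tau}}$ and by a scalar rescaling of the initial datum, this shows $\Phi(t,\tau,\omega,\cdot)$ is a compact operator on $H$ for every $t>0$.

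Next, using that $F(\tau-1,\theta_{-1}\omega)$ is bounded in $H$, set
\[
K(\tau,\omega):=\overline{\Phi\big(1,\tau-1,\theta_{-1}\omega,F(\tau-1,\theta_{-1}\omega)\big)},
\]
which is compact by the previous step. For every $D\in\mathcal D$, Lemma \ref{prop:bdd_ab1-n} provides $T_0(D,\tau,\omega)$ with $\Phi(t-1,\tau-t,\theta_{-t}\omega,D(\theta_{-t}\omega))\subseteq F(\tau-1,\theta_{-1}\omega)$ for $t\ge T_0+1$, so the cocycle property gives
\begin{align*}
\Phi(t,\tau-t,\theta_{-t}\omega,D(\theta_{-t}\omega))
&=\Phi\big(1,\tau-1,\theta_{-1}\omega,\Phi(t-1,\tau-t,\theta_{-t}\omega,D(\theta_{-t}\omega))\big)\\
&\subseteq\Phi\big(1,\tau-1,\theta_{-1}\omega,F(\tau-1,\theta_{-1}\omega)\big)\subseteq K(\tau,\omega),
\end{align*}
so $\{K(\tau,\omega)\}$ is a compact $\mathcal D$-pullback absorbing set; its measurability in $(\tau,\omega)$ follows from the measurability of $\Phi$ (Theorem \ref{rds}) and of $F$ (Lemma \ref{prop:bdd_ab1-n}). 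Invoking the abstract pullback-attractor criterion in Appendix \ref{app:rds} then yields a unique $\mathcal D$-pullback random attractor
\[
\mathcal A(\tau,\omega)=\bigcap_{s\ge 0}\overline{\bigcup_{t\ge s}\Phi\big(t,\tau-t,\theta_{-t}\omega,K(\tau-t,\theta_{-t}\omega)\big)}.
\]

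The main obstacle I expect is the asymptotic compactness step: one has to run the Aubin--Lions argument on the moving intervals $[\tau-1,\tau]$ after the pullback has driven the datum into $F(\tau-1,\theta_{-1}\omega)$, verify that the a priori estimates survive the weaker hypotheses on $f$ and $g$ ($L^1_{\mathrm{loc}}$ and the stated exponential-integrability condition on $|f|$ rather than constants), and confirm the joint $(\tau,\omega)$-measurability of $K(\tau,\omega)$ needed to apply the abstract theorem; the remainder is a routine transcription of the autonomous proof.
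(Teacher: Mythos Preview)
Your proposal is correct and follows essentially the same route as the paper: the paper's proof simply says that one reuses the precompactness argument of Theorem \ref{thm:random attractor} (the Aubin--Lions step and the one-step absorption into $K(\tau,\omega)$), combines it with the bounded absorbing set from Lemma \ref{prop:bdd_ab1-n}, and then invokes Theorem \ref{att} together with Remark \ref{asymptotically compact}. Your write-up merely spells out those steps in detail, including the construction of $K(\tau,\omega)=\overline{\Phi(1,\tau-1,\theta_{-1}\omega,F(\tau-1,\theta_{-1}\omega))}$, which is exactly what the paper leaves implicit.
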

\begin{proof}
	its proof is similar to the Theorem \ref{thm:random attractor}, According to Theorem \ref{att} and Remark  \ref{asymptotically compact}, By the existence of the bounded absorbing
set in Lemma \ref{prop:bdd_ab1-n} and  the precompactness of the cocycle $\Phi$, which proof is same as Theorem \ref{thm:random attractor}, then we  complete the proof.
\end{proof}
\section{Example}\label{example}
In this section, we give two examples to illustrate  that how to apply the previous theory to obtain the dynamics. In addition, using our results, some other models in Refs.
\cite{MR3027636,MR2812588,MR4097253}  driven by the linear multiplicative fBm  have random attractor or $\mathcal{D}$-pullback random attractor.
\begin{ex}[Stochastic porous medium equation] Let $\mathcal{O}$ be a bounded open set in $\mathbb{R}^d,d\geq 1$. For any $r>1$, we introduce the following triple
	$$V :=L^{r+1}(\mathcal{O})\subseteq H:=W_{0}^{-1,2}(\mathcal{O})\subseteq V^*$$
	and the stochastic porous media equation
	$$du=\triangle (|u|^{r-1}u)+\beta ud\omega_t,$$
	where $\beta\in\mathbb{R}$. Then it generates a random dynamical system $\varphi$, and  it has a unique random attractor $\mathcal{A}$.
	\begin{proof}
		Firstly, it is easy to check $(A1)$-$(A4)$. Indeed, according to $(H1)$-$(H4)$ in \cite[page 71]{MR2329435},  $(A1)$ holds, $\eta=\rho=0,C=2c|\mathcal{O}|$ for $(A2)$,
$\alpha=r+1,K=0,\gamma=2,C=2c|\mathcal{O}|$ for $(A3)$,  $\varpi=0,C=2c|\mathcal{O}|$ for $(A4)$, where $c>0$ is appropriate constant and $|\mathcal{O}|$ is a Lebesgue measure of
$\mathcal{O}$. Finally, Since $L^2(\mathcal{O})\subseteq H$  is  compact, then so is $L^{r+1}(\mathcal{O})\subseteq H$. Thus by Theorem \ref{rds} and Theorem \ref{thm:random
attractor}, the solution of stochastic porous media equation can generate a random dynamical system $\varphi$, and it has a unique random attractor $\mathcal{A}(\o)$.
	\end{proof}
\end{ex}
\begin{ex}[$2D$-stochastic Navier-Stokes equation]
	Now we consider the $2D$-stochastic Navier-Stokes equation. Let $\mathcal{O}$ is a bounded domain in $\mathbb{R}^2$ with regular boundary $\Gamma$, we introduce the following
spaces
	$$
	\mathcal{V}=\left\{\phi \in\left(\mathcal{C}_0^{\infty}(\mathcal{O})\right)^2: \nabla \cdot \phi=0\right\}
	$$
	
	$H=$ closure of $\mathcal{V}$ in $\left(L^2(\mathcal{O})\right)^2$ with inner product $(\cdot, \cdot)$ and associate norm $|\cdot|$,
	
	$V=$ closure of $\mathcal{V}$ in $\left(H_0^1(\mathcal{O})\right)^2$ with inner product $((\cdot, \cdot))$ and associate norm $\|\cdot\|$,
	
	$H^{*}=$ dual space of $H, V^{*}=$ dual space of $V$ with norm $\|\cdot\|_{V^{*}}$, and ${}_{V^*}\langle\cdot, \cdot\rangle_V$ denotes the dual pairing between $V$ and $V^{*}$.
	
	Thus we have the  Gelfand triple $V\subseteq H=H^*\subseteq V^*$, where the embeddings are dense and compact. More precisely, we consider the following model:
	\begin{equation}
		\left\{\begin{array}{l}
			\frac{\partial u}{\partial t}-\nu \Delta u+(u(t) \cdot \nabla) u+\nabla p=h(t)+\beta u\dot{\omega}_t \text { in } \mathcal{O} \times(\tau, \infty), \\
			\operatorname{div} u=0 \text { in } \mathcal{O} \times(\tau, \infty), \\
			u=0 \text { on } \Gamma \times(\tau, \infty), \\
			u(x, \tau)=u_\tau(x), x \in \mathcal{O},
		\end{array}\right.
	\end{equation}
	where  $\nu>0$ is a  viscosity coefficient, $p$ represents the pressure term, $h\in L^2_{loc}(\mathbb{R};H)$, $\beta\in\mathbb{R}$.
	
	Now,  the operator $A: V \rightarrow V^{*}$ defined by $\langle A u, v\rangle=((u, v))$, and $D(A)=$ $\left(H^2(\mathcal{O})\right)^2 \cap V$. Then for any $u \in D(A), A u=-P
\triangle u$ is a Stokes operator, where $P$ is the orthoprojector from $\left(L^2(\mathcal{O})\right)^2$ onto $H$. We denote by $0<\lambda_1\leqslant\lambda_2\leqslant\cdots$ the
eigenvalues of $A$.
	Then the $2D$ stochastic Navier-Stokes equation can  be rewritten as:
	\begin{align}
		du=(\nu Au+B(u)+h(t))dt+\beta u_td\omega_{t},
	\end{align}
	where
	$$B:V\times V\rightarrow V^*, B(u,v)=-P[(u\cdot\nabla)v], B(u):= B(u,u).$$
	Finally, we have to required that  the $|h|^2$ is exponentially integrable(see Definition \ref{growth}) and  $\lim_{t\rightarrow -\infty}e^{-\eta t}\int_{-\infty}^{0}e^{\eta_1
r}|h(r+t)|^2dr=0$ for any $\eta>0$ and $\eta_1<\lambda_1(\frac{\nu}{2}-\epsilon),\epsilon<\frac{\nu}{2}$  to construct the bounded, tempered, random attracting set.  Hence it
generates a continuous cocycle $\Phi$, and  it has a unique $\mathcal{D}$-pullback random attractor $\mathcal{A}(\tau,\omega)$. we next illustrate the corresponding conditions for
this system and it can be found in \cite[Example 3.3]{MR2833734}.
	\begin{proof}
		$(A1^\prime)$ is obvious for $\nu Au+B(u)+h(t)$ since $A$ is linear and $B$ is bilinear. Let $t\in\mathbb{R},v\in V$, then $\eta=0$ and $\rho(v)=\|v\|^4_{L^4(\mathcal{O})}$
in $(A2^\prime)$.  $g(t)=0, f(t)=C|h(t)|^2,\alpha=2,c=\frac{\nu}{2}$ in $(A3^\prime)$. $\varpi=2$ in $(A4^\prime)$. In addition, $V\subseteq H$ is compact. For any $T>0$, then
Corollary \ref{well-posedness-o-1} and Theorem \ref{dependece-2} show that stochastic Naiver-Stokes system has a unique solution in $C([\tau,\tau+T],H)\cap L^2([\tau,\tau+T],V)$.
Therefore, Theorem \ref{rds} and Theorem \ref{thm:random attractor-n} tell us that the solution  of stochastic Navier-Stokes system can generate a continuous cocycle $\Phi$ and it
has a unique $\mathcal{D}$-pullback random attractor.
	\end{proof}	
	
\end{ex}
\appendix

\section{Autonomous  and Nonautonomous RDS}\label{app:rds}
In this subsection, we recall the theory of autonomous and nonautonomous random dynamical systems(RDS). For more details we refer to Refs. \cite{MR1374107,MR1427258,MR2927390}.
Let $(X,d)$ be a complete separable metric space and $(\Omega,\F,\P,\{\t_{t}\}_{t\in\R})$ be a metric dynamical system, i.e.\ $(t,\o)\mapsto\theta_{t}(\o)$
is ($\mcB(\R)\otimes\mcF,\mcF)$-measurable, $\theta_{0}=$ id, $\theta_{t+s}=\theta_{t}\circ\theta_{s}$ and $\theta_{t}$ is $\P$-preserving for all
$s,t\in\R$.
\begin{de}
	A family $\{D(\o)\}_{\o\in\Omega}(or~ \{D(\tau,\o)\}_{\tau\in\mathbb{R},\o\in\Omega})$  of subsets of $X$ is called  a random set if it is  $\o\to d(x,D(\o))(or~ \o\to
d(x,D(\tau,\o)))$ is measurable for each $x\in X(or~x\in X~and~\tau\in\mathbb{R})$.
\end{de}
\begin{de}\label{growth}
	A continuous function  $f : \R\to\R$ is said to be
	\begin{enumerate}
		\item tempered if $\lim\limits _{r\to-\infty}f(r)e^{\eta r}=0$ for all $\eta > 0$;
		\item exponentially integrable if $ |f| \in L_{loc}^{1}(\R;\R_{+})$ and $\int_{-\infty}^{t}|f(r)|e^{\eta r}dr < \infty$ for all $t\in\R$, $\eta > 0$.
	\end{enumerate}
\end{de}
\begin{de}
	A random set $D(\omega)(or~D(\tau,\omega))$ is tempered if there exists a  random variable $r(\omega)(or~r(\tau,\omega))\in\mathbb{R}^{+}$ such that
	$D(\omega)\in B_X(0,r(\omega))(or~D(\tau,\omega)\in B_X(0,r(\tau,\omega)))$ and $r(\theta_t\o)(or~ r(\tau,\theta_t\o))$ is tempered, where $B_X(0,r(\omega))$ is a ball in space
$X$ with 	center $0$ and radius $r(\omega)$.
\end{de}
In this paper, we denote the set  of random tempered sets by $\mathcal{D}$.
\subsection{Autonomous case}
\begin{de}
	Let $(\Omega,\mathcal{F},\mathbb{P},\{\theta_{t}\}_{t\in\mathbb{R}})$ be a metric dynamical system and $X$ be a separable Banach space. We call the mapping $\varphi$  continuous
random dynamical system on $X$ if $\varphi$ has the following properties:
	\begin{enumerate}
		\item $\varphi(0,\omega,\cdot)=Id_{X}$ for all $\omega\in\Omega$;
		\item $\varphi(t+\tau,\omega,x)=\varphi(t,\theta_{\tau}\omega,\varphi(\tau,\omega,x))$ for all $t,\tau\in \mathbb{R}^{+}$ and $x\in X$;
		\item $\varphi(t,\omega,\cdot): V\rightarrow V$  is continuous for all $t\in \mathbb{R}^{+}$ and $\omega\in\Omega$;
		\item the mapping $\varphi:\mathbb{R^{+}}\times\Omega\times V\rightarrow V$ is
$(\mathcal{B}(\mathbb{R}^{+})\otimes\mathcal{F}\otimes\mathcal{B}(X),\mathcal{B}(X))$-measurable.
	\end{enumerate}
\end{de}
\begin{de}
	A family  $B=(B(\omega))_{\omega\in\Omega}\subset X$  is called pullback absorbing set for $\mathcal{D}$ if
	$$\varphi(t,\theta_{-t}\omega,D(\theta_{-t}\omega))\subset B(\omega),\quad t\geq T(D,\omega)$$
	for any $D\in \mathcal{D}$ and $\omega\in\Omega$, where $T(D,\omega)$ is called absorption time.
\end{de}
For two subsets $A,B\subseteq X$ we introduce a Hausdorff semi-distance:
\[
dist(A,B):=\begin{cases}
	\sup\limits _{a\in A}\inf\limits _{b\in B}d(a,b), & \text{ if }A\ne\emptyset;\\
	\infty, & \text{ otherwise.}
\end{cases}
\]
\begin{de}
	Let $\mathcal{A}=\{A(\omega)\}$ be a random attractor for random dynamical system $\varphi$ if
	\begin{enumerate}
		\item $\mathcal{A}(\omega)$ is a compact set for all $\omega\in\Omega.$
		\item for any $\omega\in\Omega$ and $t\in \mathbb{R}^{+}$,
		$$\varphi(t,\omega,\mathcal{A}(\omega))=\mathcal{A}(\theta_{t}\omega).$$
		\item for any $D\in\hat{\mathcal{D}},\omega\in\Omega$,
		$$\lim_{\mathbb{T}^{+}\ni t\rightarrow \infty}dist(\varphi(t,\theta_{-t}\omega,D(\theta_{-t}\omega)),\mathcal{A}(\omega))\rightarrow 0.$$
	\end{enumerate}
\end{de}
For autonomous case, we have the following theorem to construct random attractors, more details can found in Ref. \cite{MR1427258}.
\begin{thm}\label{attrctor-a}
	Let  $\varphi$ is a random dynamical system and has a compact pullback absorbing set $B$ in $\mathcal{D}$. Then $\varphi$ has a unique random attractor
$\mathcal{A}=\{\mathcal{A}(\omega)\}$ for $\mathcal{D}$ as follows:
	$$\mathcal{A}(\omega)=\bigcap_{s\geq T(B,\omega)\in \mathbb{T}^{+}}\overline{\bigcup_{t\geq s}\varphi(t,\theta_{-t}\omega,B(\theta_{-t}\omega))},\quad \forall \omega\in\Omega.$$
\end{thm}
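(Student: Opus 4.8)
The plan is to take the candidate set to be the pullback $\omega$-limit set of the compact absorbing set $B$, namely
$$\mathcal{A}(\omega)=\Omega_B(\omega):=\bigcap_{s\ge T(B,\omega)}\overline{\bigcup_{t\ge s}\varphi(t,\theta_{-t}\omega,B(\theta_{-t}\omega))},$$
and to verify the three defining properties of a random attractor together with measurability and uniqueness. For compactness I would use the absorbing property $\varphi(t,\theta_{-t}\omega,B(\theta_{-t}\omega))\subseteq B(\omega)$ for all $t\ge T(B,\omega)$, so that each set $\overline{\bigcup_{t\ge s}\varphi(t,\theta_{-t}\omega,B(\theta_{-t}\omega))}$ is a closed subset of the compact set $B(\omega)$, hence compact; then $\mathcal{A}(\omega)$ is a nested intersection of nonempty compact sets and is therefore itself nonempty and compact. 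The sequential description I will use throughout is that $y\in\mathcal{A}(\omega)$ if and only if there exist $s_n\to\infty$ and $b_n\in B(\theta_{-s_n}\omega)$ with $\varphi(s_n,\theta_{-s_n}\omega,b_n)\to y$.

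For the attraction property I would argue by contradiction: if it failed for some $D\in\mathcal{D}$, there would exist $\varepsilon>0$, $t_n\to\infty$ and $x_n\in D(\theta_{-t_n}\omega)$ with $\mathrm{dist}(\varphi(t_n,\theta_{-t_n}\omega,x_n),\mathcal{A}(\omega))\ge\varepsilon$. Since $B$ absorbs $D$, the points $y_n:=\varphi(t_n,\theta_{-t_n}\omega,x_n)$ lie in the compact set $B(\omega)$ for $n$ large, so along a subsequence $y_n\to y\in B(\omega)$. The crux is to show $y\in\mathcal{A}(\omega)$. For each fixed $m$ I would split by the cocycle property as $y_n=\varphi(m,\theta_{-m}\omega,\varphi(t_n-m,\theta_{-(t_n-m)}(\theta_{-m}\omega),x_n))$; absorption of $D$ by $B$ at the base point $\theta_{-m}\omega$ places the inner term $z_n^{(m)}:=\varphi(t_n-m,\theta_{-(t_n-m)}(\theta_{-m}\omega),x_n)$ in the compact set $B(\theta_{-m}\omega)$ once $t_n-m$ exceeds the corresponding absorption time. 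Extracting $z_n^{(m)}\to z^{(m)}\in B(\theta_{-m}\omega)$ and using continuity of $\varphi(m,\theta_{-m}\omega,\cdot)$ yields $y=\varphi(m,\theta_{-m}\omega,z^{(m)})$, so $y\in\varphi(m,\theta_{-m}\omega,B(\theta_{-m}\omega))$ for every $m$; by the sequential description $y\in\mathcal{A}(\omega)$, contradicting the lower bound $\varepsilon$.

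For invariance I would prove $\varphi(t,\omega,\mathcal{A}(\omega))=\mathcal{A}(\theta_t\omega)$ by two inclusions, both resting on the cocycle identity and continuity. For $\subseteq$, taking $y=\lim\varphi(s_n,\theta_{-s_n}\omega,b_n)\in\mathcal{A}(\omega)$ and using $\varphi(t,\omega,\varphi(s_n,\theta_{-s_n}\omega,b_n))=\varphi(t+s_n,\theta_{-(t+s_n)}(\theta_t\omega),b_n)$ with $t+s_n\to\infty$ and $b_n\in B(\theta_{-(t+s_n)}(\theta_t\omega))$ exhibits $\varphi(t,\omega,y)$ as an $\omega$-limit point relative to $\theta_t\omega$, hence in $\mathcal{A}(\theta_t\omega)$. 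For $\supseteq$, given $y\in\mathcal{A}(\theta_t\omega)$ with $y=\lim\varphi(s_n,\theta_{-s_n}\theta_t\omega,b_n)$, I would peel off time $t$ via $\varphi(s_n,\theta_{-(s_n-t)}\omega,b_n)=\varphi(t,\omega,w_n)$ where $w_n=\varphi(s_n-t,\theta_{-(s_n-t)}\omega,b_n)$; absorption sends $w_n$ into the compact $B(\omega)$, so a subsequence converges to some $w\in\mathcal{A}(\omega)$, and continuity gives $y=\varphi(t,\omega,w)\in\varphi(t,\omega,\mathcal{A}(\omega))$.

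Finally, measurability of $\omega\mapsto\mathcal{A}(\omega)$ would follow from the joint measurability of $\varphi$ together with the representation of $\mathcal{A}$ as a countable intersection (over $s\in\mathbb{N}$) of closures of separably-valued unions inside the Polish space, which makes $\omega\mapsto\mathrm{dist}(x,\mathcal{A}(\omega))$ measurable for each $x$; uniqueness is standard, for if $\mathcal{A}'$ is another attractor in $\mathcal{D}$ then invariance of $\mathcal{A}'$ and attraction of $\mathcal{A}$ give $\mathrm{dist}(\mathcal{A}'(\omega),\mathcal{A}(\omega))=\lim_{t\to\infty}\mathrm{dist}(\varphi(t,\theta_{-t}\omega,\mathcal{A}'(\theta_{-t}\omega)),\mathcal{A}(\omega))=0$, and symmetrically, forcing $\mathcal{A}=\mathcal{A}'$. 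I expect the main obstacle to be the attraction step, specifically the cocycle-splitting-plus-diagonal extraction that identifies every limit point $y$ as an element of $\Omega_B(\omega)$: this is where compactness of $B(\omega)$, continuity of the cocycle in its initial datum, and the absorption property must be combined simultaneously, and it is also where one must take care that the absorption times at the shifted base points $\theta_{-m}\omega$ are controlled.
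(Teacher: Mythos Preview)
The paper does not supply its own proof of this theorem: it is stated in the appendix as a known result with the remark ``more details can found in Ref.~\cite{MR1427258}'' (Flandoli--Schmalfu\ss). Your proposal follows exactly the classical $\omega$-limit-set argument from that reference (and from Crauel--Debussche--Flandoli), so it is correct and aligned with what the paper implicitly invokes.
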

\subsection{Nonautonomous RDS}
\begin{de}
	We call the mapping $\Phi:  \mathbb R^+ \times \mathbb R \times \Omega \times X
	\rightarrow  X$  a continuous cocycle on $X$ over $\mathbb R$ and  $(\Omega,\mathcal{F},\mathbb P,(\theta_t)_{t\in\mathbb R})$  if  for  all
$\omega\in\Omega,s,t\in\mathbb{R}^{+}$, the following conditions hold
	\begin{itemize}
		\item [(i)]   $\Phi (\cdot, \tau, \cdot, \cdot): \mathbb{R}^+ \times \Omega \times X
		\to X$ is
		$(\mathcal B (\R^+)   \times \mathcal F \times \mathcal B(X), \
		\mathcal B(X))$-measurable;
		
		\item[(ii)]    $\Phi(0, \tau, \omega, \cdot) $ is the identity on $X$;
		
		\item[(iii)]    $\Phi(t+ s, \tau, \omega, \cdot) =
		\Phi(t,  \tau +s,  \theta_{s} \omega, \cdot)
		\circ \Phi(s, \tau, \omega, \cdot)$;
		
		\item[(iv)]    $\Phi(t, \tau, \omega,  \cdot): X \to  X$ is continuous.
	\end{itemize}
\end{de}
Let   $\mathcal D$   be
a family  of  nonempty subsets of $X$:
$$
\label{defcald}
{\mathcal D} = \{ D =\{ D(\tau, \omega ) \subseteq X: \
D(\tau, \omega ) \neq \emptyset,  \
\tau \in \mathbb R, \
\omega \in \Omega\} \}.
$$
We need give  a notion of  $\mathcal D$-complete  solutions  for $\Phi$.
\begin{de}
	\label{comporbit}
	Let $\mathcal D$ be a   family  of
	nonempty  subsets of $X$  as above.

	(i)  we call a  mapping $\psi: \R \times \R \times \Omega$
	$\to X$ a complete orbit (solution)  for  $\Phi$ if for each  $t \in \R^+ $,
	$s, \tau \in \R$ and $\omega \in \Omega$,  the following holds:
	$$
	\Phi (t,  \tau +s, \theta_{s} \omega,
	\psi (s, \tau, \omega) )
	= \psi (t +  s, \tau, \omega ).
	$$
	In addition,	if  there exists $D=\{D(\tau, \omega): \tau \in \R,
	\omega \in \Omega \}\in \mathcal D$ such that
	$\psi(t, \tau, \omega)$ belongs to
	$D ( \tau +t, \theta_{ t} \omega )$
	for every  $t \in \R$, $\tau \in \R$
	and $\omega \in \Omega$, then we call $\psi$ a
	$\mathcal D$-complete orbit (solution)  of $\Phi$.

	(ii)   A mapping $\xi:  \R   \times \Omega$
	$\to X$ is called a complete quasi-solution  of $\Phi$ if for every  $t \in \R^+ $,
	$\tau \in \R$ and $\omega \in \Omega$,  the following holds:
	$$
	\Phi (t,  \tau,  \omega,
	\xi (\tau, \omega) )
	= \xi (  \tau +t,  \theta_t \omega ).
	$$
	If, in  addition,    there exists $D=\{D(\tau, \omega): \tau \in \R,
	\omega \in \Omega \}\in \mathcal D$ such that
	$\xi (\tau, \omega)$ belongs to
	$D ( \tau,  \omega )$
	for all   $\tau \in \R$
	and $\omega \in \Omega$, then $\xi$ is called a
	$\mathcal D$-complete   quasi-solution  of $\Phi$.	
\end{de}
Consider the  metric dynamical systems $(\tilde\Omega_1,  \{\theta_{1,t}\}_{t \in \mathbb R})$
and
$(\tilde\Omega_2, \mathcal{F}_2, \mathbb P,  \{\theta_{2,t}\}_{t \in \mathbb R})$, which definition is similar to Remark \ref{att r}.
\begin{de}
	\label{asycomp}
	$\Phi$ is called $\mathcal{D}$-pullback asymptotically
	compact in $X$ if
	for all $\omega_1 \in \tilde\Omega_1$ and
	$\omega_2 \in \tilde\Omega_2$,    the sequence
	$$
	\label{asycomp1}
	\{\Phi(t_n, \theta_{1, -t_n} \omega_1, \theta_{2, -t_n} \omega_2,
	x_n)\}_{n=1}^\infty \mbox{  has a convergent  subsequence  in }   X
	$$
	whenever
	$t_n \to \infty$, and $ x_n\in   B(\theta_{1, -t_n}\omega_1,
	\theta_{2, -t_n} \omega_2 )$   with
	$\{B(\omega_1, \omega_2): \omega_1 \in \tilde\Omega_1, \ \omega_2 \in \tilde\Omega_2
	\}   \in \mathcal{D}$.
\end{de}
\begin{rem}\label{att r}
	In this paper, we require that $\tilde\Omega_1=\mathbb{R}, \theta_{1,t}s=s+t,s,t\in\mathbb{R}$,  $(\tilde\Omega_2, \mathcal{F}_2, \mathbb P,  \{\theta_{2,t}\}_{t \in \mathbb R})$
is $(\Omega,\mathcal{F},\mathbb{P},\{\theta_{t}\}_{t\in\mathbb{R}})$.
\end{rem}
\begin{rem}\label{asymptotically compact}
	In particular, for any $B\in\mathcal{D}$, if there exists a compact random absorbing set $K(\o_1,\o_2)$ for $B$, i.e.
	$$\lim_{t\rightarrow -\infty}\sup_{x\in B(\theta_{1,-t}\o_1,\theta_{2,-t}\o_2)}d(\Phi(t,\theta_{1,-t}\o_1,\theta_{2,-t}\o_2,x),K(\o_1,\o_2))=0$$
	holds for all $\o_1\in\tilde{\Omega_1}$ and $\o_2\in\tilde{\Omega_2}$.	Thus, for any $\o_1\in\tilde{\Omega_1}$ and $\o_2\in\tilde{\Omega_2}$, we have
	$$\lim_{n\rightarrow \infty}\sup_{x_n\in B(\theta_{1,-t_n}\o_1,\theta_{2,-t_n}\o_2)}d(\Phi(t_n,\theta_{1,-t_n}\o_1,\theta_{2,-t_n}\o_2,x_n),K(\o_1,\o_2))=0$$
	holds for any sequences $\{x_n\}$ and $\{t_n\}$ with $t_n\rightarrow \infty$ as $n\rightarrow \infty$. According to the compactness of $K$,  then the set
$\{\Phi(t_n,\theta_{1,-t_n}\o_1,\theta_{2,-t_n}\o_2,x_n)\}$ is asymptotically compact. It means that the cocycle  $\Phi$ must be asymptotically compact if the cocycle $\Phi$ the
precompact and has a bounded absorbing set.
\end{rem}
\begin{de}
	\label{defomlit}
	Let $B=\{B(\omega_1, \omega_2): \omega_1 \in \tilde\Omega_1, \ \omega_2  \in \tilde\Omega_2\}$
	be a family of nonempty subsets of $X$.
	For each $\omega_1 \in \tilde\Omega_1$ and
	$\omega_2 \in \tilde\Omega_2$,  let
	\begin{align*}\label{omegalimit}
		\Omega (B, \omega_1, \omega_2)
		= \bigcap_{\tau \ge 0}
		\  \overline{ \bigcup_{t\ge \tau} \Phi(t, \theta_{1,-t} \omega_1,
			\theta_{2, -t} \omega_2, B(\theta_{1,-t} \omega_1, \theta_{2,-t}\omega_2  ))}.
	\end{align*}
	Then
	the  family
	$\{\Omega (B, \omega_1, \omega_2): \omega_1 \in \tilde\Omega_1, \omega_2 \in \tilde\Omega_2 \}$
	is called the $\Omega$-limit set of $B$
	and is denoted by $\Omega(B)$.
\end{de}
\begin{thm}{\cite[Theorem 2.23]{MR2927390} }
	\label{att}	Let $\mathcal D$ a  neighborhood closed  collection of some
	families of   nonempty subsets of
	$X$   and $\Phi$  be a continuous   cocycle on $X$
	over $(\tilde\Omega_1,  \{\theta_{1,t}\}_{t \in \mathbb R})$
	and
	$(\tilde\Omega_2, \mathcal{F}_2, \mathbb P,  \{\theta_{2,t}\}_{t \in \mathbb R})$.
	Then
	$\Phi$ has a  $\mathcal D$-pullback
	attractor $\mathcal A$  in $\mathcal D$
	if and only if
	$\Phi$ is $\mathcal D$-pullback asymptotically
	compact in $X$ and $\Phi$ has a  closed
	measurable (w.r.t. the $P$-completion of $\mathcal{F}_2$)
	$\mathcal D$-pullback absorbing set
	$K$ in $\mathcal D$.
	The $\mathcal D$-pullback
	attractor $\mathcal A$   is unique   and is given  by,
	for each $\omega_1  \in \tilde\Omega_1$   and
	$\omega_2 \in \tilde\Omega_2$,
	\begin{equation}\label{attform1}
		\mathcal A (\omega_1, \omega_2)
		=\Omega(K, \omega_1, \omega_2)
		=\bigcup_{B \in \mathcal D} \Omega(B, \omega_1, \omega_2)
	\end{equation}
	\begin{equation}\nonumber
		=\{\psi(0, \omega_1, \omega_2): \psi \mbox{ is a  }  \mathcal D {\rm -}
		\mbox{complete orbit of } \Phi\} .
	\end{equation}
\end{thm}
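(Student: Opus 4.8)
The plan is to prove the equivalence in two directions and then to settle uniqueness and the two representation formulas. Write $\boldsymbol\omega=(\omega_1,\omega_2)$ and $\Theta_t=(\theta_{1,t},\theta_{2,t})$, so that the cocycle identity reads $\Phi(t+s,\boldsymbol\omega,\cdot)=\Phi(t,\Theta_s\boldsymbol\omega,\cdot)\circ\Phi(s,\boldsymbol\omega,\cdot)$. The first thing I would record is the sequential description of the $\Omega$-limit set of Definition \ref{defomlit},
\[
\Omega(B,\boldsymbol\omega)=\Big\{y\in X:\ \exists\,t_n\to\infty,\ x_n\in B(\Theta_{-t_n}\boldsymbol\omega),\ \Phi(t_n,\Theta_{-t_n}\boldsymbol\omega,x_n)\to y\Big\},
\]
which is elementary from the nested-intersection definition and is the workhorse for everything below.

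For sufficiency I set $\mathcal A(\boldsymbol\omega):=\Omega(K,\boldsymbol\omega)$ and verify the three defining properties of a pullback attractor. Nonemptiness and compactness come directly from the $\mathcal D$-pullback asymptotic compactness of Definition \ref{asycomp}: sequences drawn from $K$ along $t_n\to\infty$ subconverge, their limits populate $\mathcal A(\boldsymbol\omega)$, and a diagonal argument shows the set is closed and precompact. Invariance, $\Phi(t,\boldsymbol\omega,\mathcal A(\boldsymbol\omega))=\mathcal A(\Theta_t\boldsymbol\omega)$, follows from the sequential description together with continuity of $x\mapsto\Phi(t,\boldsymbol\omega,x)$ and the cocycle identity: one inclusion pushes limits forward by continuity, the reverse pulls a preimage sequence back using asymptotic compactness. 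Attraction I would prove by contradiction: if some $D\in\mathcal D$ were not attracted there would be $\varepsilon>0$, $t_n\to\infty$ and $x_n\in D(\Theta_{-t_n}\boldsymbol\omega)$ with $\mathrm{dist}(\Phi(t_n,\Theta_{-t_n}\boldsymbol\omega,x_n),\mathcal A(\boldsymbol\omega))\ge\varepsilon$; since $K$ is $\mathcal D$-pullback absorbing, the orbits enter $K$ after the absorption time, so splitting $\Phi(t_n,\cdot)$ through an intermediate time and applying asymptotic compactness yields a subsequential limit lying in $\mathcal A(\boldsymbol\omega)$, contradicting the lower bound.

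The measurability of $\boldsymbol\omega\mapsto\mathcal A(\boldsymbol\omega)$ with respect to the $\mathbb P$-completion of $\mathcal F_2$ is the step I expect to be the main obstacle. Since $\Omega(K,\boldsymbol\omega)$ is an intersection over $\tau$ of closures of unions over $t\ge\tau$ of the measurable maps $\boldsymbol\omega\mapsto\Phi(t,\Theta_{-t}\boldsymbol\omega,K(\Theta_{-t}\boldsymbol\omega))$, I would first use continuity of $\Phi$ in $t$ and separability of $X$ to replace the uncountable unions by countable ones, then express, for open $O\subseteq X$, the event $\{\boldsymbol\omega:\mathcal A(\boldsymbol\omega)\cap O\neq\emptyset\}$ through the map $\boldsymbol\omega\mapsto d(\cdot,\mathcal A(\boldsymbol\omega))$, and finally invoke the projection (measurable selection) theorem for analytic sets. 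It is exactly this projection step that forces passage to the completed $\sigma$-algebra, which is why closedness and completeness of $\mathcal F_2$ appear in the hypothesis.

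For necessity, assume $\mathcal A$ is a $\mathcal D$-pullback attractor. Asymptotic compactness is then automatic: any $\Phi(t_n,\Theta_{-t_n}\boldsymbol\omega,x_n)$ with $x_n$ in a member of $\mathcal D$ is attracted to the compact set $\mathcal A(\boldsymbol\omega)$, hence eventually lies in a small neighborhood of a compact set and subconverges. A closed measurable absorbing set is obtained as a tempered closed neighborhood of $\mathcal A$, which again belongs to $\mathcal D$ because $\mathcal D$ is neighborhood closed, with the absorbing property furnished by attraction. Uniqueness follows by the usual sandwich: if $\mathcal A,\mathcal A'$ are both attractors, invariance plus mutual attraction of these $\mathcal D$-sets gives $\mathcal A(\boldsymbol\omega)\subseteq\mathcal A'(\boldsymbol\omega)$ and conversely. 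Finally, $\Omega(K)=\bigcup_{B\in\mathcal D}\Omega(B)$ holds since $K\in\mathcal D$ yields $\subseteq$ while invariance and attraction give $\Omega(B,\boldsymbol\omega)\subseteq\mathcal A(\boldsymbol\omega)=\Omega(K,\boldsymbol\omega)$ for each $B$; and the complete-orbit formula in \eqref{attform1} follows by constructing, for each $a\in\mathcal A(\boldsymbol\omega)$, a $\mathcal D$-complete orbit through $a$ via repeated use of invariance in the sense of Definition \ref{comporbit}, while conversely any $\mathcal D$-complete orbit stays in a member of $\mathcal D$, is attracted to $\mathcal A$, and by invariance already takes its value at $0$ in $\mathcal A(\boldsymbol\omega)$.
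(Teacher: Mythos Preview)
The paper does not prove this theorem at all: it is stated in the appendix with an explicit citation to \cite[Theorem 2.23]{MR2927390} and no argument is given, so there is no ``paper's own proof'' to compare against. Your outline is in fact the standard route used in that reference (sequential characterization of the $\Omega$-limit set, compactness and invariance of $\Omega(K,\cdot)$ from asymptotic compactness and the cocycle identity, attraction by contradiction via the absorbing property, measurability through a projection/selection argument on the completed $\sigma$-algebra, and the usual sandwich for uniqueness together with the complete-orbit description), and it is a correct sketch of that proof.
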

 %%%%%%%%%%%%%%%%%%%%%%%%%%%%%%%%%%%%%%%%%%%%%%%%%%%%%%%%%%%%%%%%%%%%%%%%

\renewcommand{\theequation}{A\arabic{equation}}
\setcounter{equation}{0}
%
%\footnotesize \noindent\textbf{Appendix A}
%\\
%
%If there are appendices, please write them here.

%\end{comment}
%\end{CJK*}
\end{document}